\documentclass[a4paper, 11pt]{amsart}
\def\doctype{}

\usepackage{latexsym,amssymb,amsthm}
\usepackage{color, dsfont, fancyhdr}
\usepackage{nicematrix}
\usepackage{tikz}
\usepackage{hyperref}
\usepackage{float}
\usepackage{enumitem}
\usepackage{graphicx}

\newcommand{\A}{\mathrm{A}}
\renewcommand\S{\mathrm{S}}

\newcommand\Z{\mathbb{Z}}
\newcommand\Q{\mathcal{Q}}

\newcommand{\cB}{\mathcal{B}}

\renewcommand\P{\mathcal{P}}
\newcommand\fix{\mathrm{fix}}
\newcommand\supp{\mathrm{supp}}

\newcommand{\comment}[1]{}
\newcommand\aug{\fboxsep=-\fboxrule\!\!\!\fbox{\strut}\!\!\!}

\addtolength{\textheight}{-\baselineskip} 
\addtolength{\footskip}{\baselineskip}
\setcounter{MaxMatrixCols}{20}
\setlength{\textwidth}{6.6in}
\setlength{\oddsidemargin}{-0.1in}
\setlength{\evensidemargin}{0in}
\setlength{\topmargin}{-0.2in}
\setlength{\textheight}{9.4in}


\pagestyle{fancy}
\fancyhead[C]{}
\fancyhead[R]{}
\fancyhead[L]{}
\cfoot{\vspace{5pt} \thepage}

\fancypagestyle{titlepage}{
\fancyhead[R]{\doctype}
\fancyhead[CL]{}
\cfoot{\vspace{5pt} \thepage}
}


\let\oldsection\section
\newcommand\boldsection[1]{\oldsection{\bf #1}}
\newcommand\starsection[1]{\oldsection*{\bf #1}}
\makeatletter
\renewcommand\section{\@ifstar\starsection\boldsection}
\makeatother

\newtheorem{thm}{Theorem}[section]  
\newtheorem{lemma}[thm]{Lemma}     
\newtheorem{cor}[thm]{Corollary}
\newtheorem{conj}[thm]{Conjecture}

\newtheorem{prop}[thm]{Proposition}

\newtheoremstyle{algorithm}
  {11pt}		  
  {11pt}  
  {\tt}  
  {}     
  {\bf}  
  {. }    
  {\newline}    
  {}     
\theoremstyle{algorithm}
\newtheorem{alg}{Algorithm}[section]


\theoremstyle{definition}



\makeatletter
\renewcommand*\@maketitle{%
  \normalfont\normalsize
  \@adminfootnotes
  \@mkboth{\@nx\shortauthors}{\@nx\shorttitle}%
  \global\topskip0\p@\relax 
  \@settitle
  \ifx\@empty\authors \else {\vskip 1em
\vtop{\centering\shortauthors\@@par}} \fi
  \ifx\@empty\@date \else {\vskip 1em \vtop{\centering\@date\@@par}}\fi 
  \ifx\@empty\@dedicatory
  \else
    \baselineskip18\p@
    \vtop{\centering{\footnotesize\itshape\@dedicatory\@@par}%
      \global\dimen@i\prevdepth}\prevdepth\dimen@i
  \fi
  \@setabstract
  \normalsize
  \if@titlepage
    \newpage
  \else
    \dimen@34\p@ \advance\dimen@-\baselineskip
    \vskip\dimen@\relax
  \fi
} 
\renewcommand*\@adminfootnotes{%
  \let\@makefnmark\relax  \let\@thefnmark\relax
  \ifx\@empty\@subjclass\else \@footnotetext{\@setsubjclass}\fi
  \ifx\@empty\@keywords\else \@footnotetext{\@setkeywords}\fi
  \ifx\@empty\thankses\else \@footnotetext{%
    \def\par{\let\par\@par}\@setthanks}%
  \fi
\thispagestyle{titlepage}
}
\makeatother

\pgfsetlayers{nodelayer,edgelayer} 

\begin{document}

\title[]{\large On Cameron's greedy conjecture}

\author{Coen del Valle and Colva M. Roney-Dougal}
\address{
School of Mathematics and Statistics,
University of St Andrews, St Andrews, UK
}
\email{cdv1@st-andrews.ac.uk, Colva.Roney-Dougal@st-andrews.ac.uk}

\thanks{Both authors are grateful to Peter Cameron for many useful and interesting conversations. Research of Coen~del~Valle is supported by the Natural Sciences and Engineering Research Council of Canada (NSERC), [funding reference number PGSD-577816-2023], as well as a University of St Andrews School of Mathematics and Statistics Scholarship.}
\keywords{Symmetric group, base size, greedy base}

\date{\today}

\begin{abstract}
A base for a permutation group $G$ acting on a set $\Omega$ is a subset $\mathcal{B}$ of $\Omega$ whose pointwise stabiliser $G_{(\mathcal{B})}$ is trivial.  There is a natural greedy algorithm for constructing a base of relatively small size. We write $\mathcal{G}(G)$ the maximum size of a base it produces, and $b(G)$ for the size of the smallest base for $G$. In 1999, Peter Cameron conjectured that there  exists an absolute constant $c$ such that every finite primitive group $G$ satisfies $\mathcal{G}(G)\leq cb(G)$. We show that if $G$ is $\S_n$ or $\A_n$ acting primitively then either Cameron's Greedy Conjecture holds for $G$, or $G$ falls into one class of possible exceptions.
\end{abstract}

\maketitle

\vspace{-0.15cm}
\section{Introduction}
A \emph{base} for a permutation group $G$ acting on a finite set $\Omega$ is a subset $\cB = \{\alpha_1,\alpha_2,\dots, \alpha_k\}\subseteq\Omega$ whose pointwise stabiliser $G_{(\cB)}$ is trivial. The size $b(G)$ of a smallest base for $G$ is the \emph{base size} of $G$. Bases rose to prominence in the 1970s through the study of permutation group algorithms (see e.g.~\cite{ser}); for many algorithms the complexity is a function of the base size.  Blaha~\cite{blaha} shows that the problem of determining the base size of an arbitrary finite permutation group is NP-hard. 

We can derive elementary upper and lower bounds as follows. If $\{\alpha_i\}_{i=1}^{k}$ is a base for $G$ then each $g\in G$ is uniquely determined by the tuple $(\alpha_i^g)_{i=1}^{k}$, so $|G|\leq |\Omega|^{k}$. On the other hand, if $\{\alpha_i\}_{i=1}^{k}$ is \emph{irredundant} --- that is $G_{\alpha_1,\alpha_2,\dots,\alpha_i}>G_{\alpha_1,\alpha_2,\dots,\alpha_{i+1}}$ for all $i$ --- then each index is at least two, so $|G|\geq 2^k$, and hence $(\log |G|)/(\log |\Omega|)\leq k\leq \log |G|$. (All of our logarithms are to the base two, unless otherwise specified.)

One can compute a relatively small base using a greedy algorithm, as analysed by Blaha~\cite{blaha}. Any point of $\Omega$ with a smallest stabiliser lies in a largest orbit. Hence, if we let $\mathcal{B}_0=\emptyset$ and $\mathcal{B}_i=\mathcal{B}_{i-1}\cup\{\beta_i\}$, $i>0$, where $\beta_i\in\Omega$ is a point in any largest orbit of $G_{(\mathcal{B}_{i-1})}$, then there is an $n\geq0$ such that $\mathcal{B}_n$ is an irredundant base --- called a \emph{greedy base} --- for $G$ which produces the greatest reduction in group size at each stage. 

Let $\mathcal{G}(G)$ denote the maximum size of a greedy base for $G$. In~\cite{blaha}, Blaha shows that the greedy approximation is nearly sharp, and in particular that there is some absolute constant $d$ such that $$\mathcal{G}(G)\leq db(G)\log \log |\Omega|.$$
Moreover, for any $k\geq 2$ and $n$ sufficiently large, Blaha constructs (intransitive) groups of degree $n$ satisfying $b(G)=k$ and $\mathcal{G}(G)\geq\frac{1}{5}k\log\log n$.

For primitive groups the situation appears different, as illustrated by this 1999 conjecture of Peter Cameron.

\begin{conj}[Cameron's Greedy Conjecture~\cite{cam}]\label{cc}
There is some absolute constant $c$ such that if $G$ is a finite primitive permutation group then $\mathcal{G}(G)\leq cb(G)$.
\end{conj}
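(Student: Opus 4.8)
The plan is to approach Conjecture~\ref{cc} through the O'Nan--Scott theorem, isolating a bounded list of families of primitive groups and bounding, within each, the ratio $\mathcal{G}(G)/b(G)$. For the denominator one uses $b(G)\ge\log|G|/\log|\Omega|$ and its sharpenings, so the real work is always an upper bound on $\mathcal{G}(G)$. The one general tool is the remark that any base for $G$ is a base for every subgroup $H\le G$: if $H_{i-1}=G_{(\mathcal{B}_{i-1})}$ is the stabiliser reached after $i-1$ greedy choices, then $H_{i-1}$, having a base of size at most $b(G)$, must have an orbit of size at least $|H_{i-1}|^{1/b(G)}$; since greedy takes a largest orbit, $|H_i|\le|H_{i-1}|^{1-1/b(G)}$, and iterating this inequality is essentially Blaha's route to $\mathcal{G}(G)\le d\,b(G)\log\log|\Omega|$~\cite{blaha}. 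The point of the conjecture is that for primitive $G$ this loses a factor of $\log\log|\Omega|$, and to remove it one must show that the greedy orbit sizes stay far above the worst case $|H_{i-1}|^{1/b(G)}$ --- concretely, that $|H_i|\le|H_{i-1}|^{1-\alpha}$ for an absolute $\alpha>0$ over a constant fraction of the steps, or that $|H_i|$ drops below $|\Omega|$ within $O(b(G))$ steps and a separate support-shrinking argument then finishes the base.

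Granting such a mechanism, I would run the O'Nan--Scott division, starting from $G$ almost simple with socle $T$. If the action is non-standard then $b(G)$ is at most an absolute constant (the Cameron--Kantor conjecture, now a theorem), and one needs the matching bound on $\mathcal{G}(G)$: fixed-point-ratio and minimal-degree estimates for the point stabilisers should keep the greedy orbit sizes within a bounded power of their maximum, so that $|G|$ is exhausted in $O(1)$ rounds. The sporadic groups are a finite check, and for exceptional groups of Lie type the bounded Lie rank similarly forces both $b(G)$ and $\mathcal{G}(G)$ to be $O(1)$. For $G$ with socle $\A_n$ (so $G$ is $\A_n$ or $\S_n$) one treats the natural action, where $\mathcal{G}=b$ outright; the action on $k$-subsets, where one shows combinatorially that the setwise stabiliser of an incomplete base still has an orbit of size $|\Omega|^{1-o(1)}$, so greedy terminates in $O(b)$ steps; and the action on partitions into equal blocks, which resists precisely this estimate and supplies the ``one class of possible exceptions''. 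Classical groups in subspace actions I would handle by the same template, with flags and totally singular subspaces in place of subsets and the counting estimates replaced by their $q$-analogues.

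The remaining O'Nan--Scott types I would reduce to the cases just treated. For affine type $G=V\rtimes H$ with $H\le\mathrm{GL}(V)$ one has $b(G)$ within an additive constant of the base size of $H$ on $V$, and $\mathcal{G}(G)$ is governed by a greedy analysis of $H$ on its natural module, which recurses into the almost simple or imprimitive structure of $H$. For diagonal, product-action, and twisted-wreath types one separates the action of the socle $T^k$ along the components from the action of the top group on the $k$ components, bounds each, and recombines. The hazard in these reductions is that a naive recursion reinstates a $\log\log$ loss at every level of the hierarchy, so the combining lemmas must be engineered to cost only additive constants --- which is exactly why one wants a robust form of the mechanism from the first paragraph rather than a bare invocation of Blaha's theorem.

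The principal obstacle, and the reason the conjecture remains open, is that first step: there is at present no uniform argument that greedy orbit sizes stay above $|H_{i-1}|^{1-\alpha}$ for a fixed $\alpha$. Each family demands its own combinatorial input of the shape ``the stabiliser of a small configuration in a primitive action still moves points in one very large orbit'', and for the equal-block partition actions of $\S_n$ and $\A_n$ this input appears to be false, which is precisely why those actions can only be placed in a class of \emph{possible} exceptions rather than settled. A single structural theorem bounding $\mathcal{G}(G)$ for all primitive $G$, superseding the present family-by-family attack, would be the genuine breakthrough needed to resolve Conjecture~\ref{cc}.
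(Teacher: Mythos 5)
There is a fundamental mismatch here: the statement you were asked about is Cameron's Greedy Conjecture itself, which the paper does not prove --- it is stated as an open conjecture, and the paper's contribution (Theorem~\ref{main}) is only that it holds, with constant $11$, for almost simple primitive groups with alternating socle, excluding the subset actions with $2r<n<4r^2$. Your text is likewise not a proof: it is a programme, and you concede its central gap yourself. Everything hinges on the mechanism in your first paragraph --- that the greedy stabilisers satisfy $|H_i|\leq|H_{i-1}|^{1-\alpha}$ for an absolute $\alpha>0$ on a constant fraction of steps --- and, as you say in your final paragraph, no such uniform statement is known. Without it, the O'Nan--Scott case division, the reductions for affine, diagonal, product and twisted-wreath types, and the recombination lemmas are all conditional scaffolding; in particular the worry you raise that naive recursion reinstates a $\log\log|\Omega|$ loss at each level is exactly the unsolved problem, not a detail to be engineered away later. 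So this cannot be judged a proof of Conjecture~\ref{cc}, correct or otherwise.

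Where your sketch touches the actual content of the paper it is also factually off, in a way worth flagging. You assert that for $\S_n$ and $\A_n$ on $k$-subsets a soft orbit-counting argument (stabiliser of an incomplete base still has an orbit of size $|\Omega|^{1-o(1)}$) finishes the case in $O(b)$ steps, while the partition actions ``resist precisely this estimate'' and form the exceptional class. The paper does the opposite: the partition actions are fully handled (Theorem~\ref{camskl}, via the intersection-array machinery of Section~\ref{sec:parts}, giving an $11$-ravenous family), whereas the subset actions are the delicate case --- Section~\ref{sec:sn} needs the {\tt MetaGreedy} bookkeeping, the analysis of excessive sets, and the restriction $n\geq 4r^2$ to prove $\mathcal{G}(G)\leq \frac{2n}{r}+1\leq\frac{17}{10}b(G)$, and it is exactly the range $2r<n<4r^2$ of the subset actions that survives as the one class of possible exceptions. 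A crude ``large orbit'' estimate of the kind you describe does not by itself control $\mathcal{G}$ there, because $b(G)$ in these standard actions is not bounded and the greedy algorithm's behaviour depends on the fine structure of how chosen $r$-sets overlap, which is what the paper's neighbourhood and excessive-set arguments are for.
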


Let $I(G)$ be the maximum size of an irredundant base for $G$. A greedy base is irredundant, and so $b(G)\leq \mathcal{G}(G)\leq I(G)$. However, the gap between $b(G)$ and $I(G)$ can be arbitrarily large, even for primitive groups. For example, in~\cite{peiran} it is shown that for a primitive action of the symmetric group $\mathrm{S}_n$ with point stabiliser $H\ne \A_n$ primitive on $[n]:=\{1,2,\dots, n\}$, an upper bound of $I(\mathrm{S}_n)\leq O(\sqrt{n})$ is best possible, and yet~\cite{burn2} shows that in this case $b(\mathrm{S}_n)=2$ for $n\geq 13$. Therefore Cameron's Greedy Conjecture suggests that $\mathcal{G}(G)$ should fall much closer to $b(G)$ than to $I(G)$. This can be seen explicitly in recent work of the first author~\cite{sporadic}, which shows that for $G$ an almost simple primitive sporadic group, $\mathcal{G}(G)=b(G)$, whereas work of Lee~\cite{Lee} shows that in the vast majority of such cases $I(G)>b(G)$. Similarly, recent work of Brenner and the authors~\cite{soluble} has shown that if $G$ is primitive of odd order then $\mathcal{G}(G)=b(G)\leq 3$.

In this paper we consider the almost simple primitive groups $G$ with alternating socle. Burness, Guralnick, and Saxl~\cite{burn2} show that for such $G$ one of the following holds:
\begin{itemize}
    \item[(i)] $G$ is acting on $\binom{[n]}{r}$, the collection of $r$-subsets of $[n]$ with $2r<n$; 
    \item[(ii)] $G$ is acting on partitions of $[kl]$ into $k$ parts of size $l$ with $kl=n$;  
    \item[(iii)] $b(G) = 2$; or
    \item[(iv)] $G$ falls into a  list of 18 exceptions: see \cite[Corollaries 1.4 and 1.5 and Remark 1.7]{burn2}.
\end{itemize}

 In 2021, Morris and Spiga~\cite{ms} building on~\cite{bgl} gave explicit formulae for $b(G)$ for all $(k,l)$ pairs with partition action, and the base size of the actions on $[n]\choose r$ has recently been completely determined by the authors~\cite{dvrd} and independently for $\S_n$ in~\cite{MeSp}.

 In this paper we make progress towards a proof of Conjecture~\ref{cc}. Let $\mathcal{F}$ be a family of finite permutation groups up to permutation isomorphism. We say that $\mathcal{F}$ is \emph{ravenous} if there exists some constant $c>0$ such that $\mathcal{G}(G)\leq cb(G)$ for all $G\in\mathcal{F}$; if we are given an explicit constant $c$ we say that $\mathcal{F}$ is \emph{$c$-ravenous}. Restated, Cameron's Greedy Conjecture is that the family of finite primitive permutation groups is ravenous. 

\begin{thm}\label{main}
Let $\mathcal{F}$ be the family of almost simple primitive groups with alternating socle, excluding those acting on ${[n]\choose r}$ with $4r^2>n>2r$. Then $\mathcal{F}$ is an $11$-ravenous family.
\end{thm}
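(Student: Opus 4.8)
The plan is to dispose of the four Burness--Guralnick--Saxl cases separately, since in each the structure of point stabilisers is well understood and the relevant values of $b(G)$ are known from the literature cited in the introduction. For case (iii), where $b(G) = 2$, it suffices to show $\mathcal{G}(G)$ is bounded by an absolute constant; since a greedy base is irredundant and the stabiliser chain drops by a factor of at least $2$ at each step while $|G| \leq |S_n| \cdot 2$ and $|\Omega| \geq$ (something polynomial in $n$), one gets $\mathcal{G}(G) \leq \log|G|/\log|\Omega| \cdot (\text{small factor})$; more carefully, after the first greedy point is chosen the remaining group is a point stabiliser $H$ acting on $\Omega$, and one can bound $\mathcal{G}(G) \leq 1 + \mathcal{G}(H\text{ on }\Omega)$ and iterate, controlling the greedy descent via the fact that each greedy step picks a point in a \emph{largest} orbit. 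For the $18$ exceptional groups in case (iv) one simply computes $\mathcal{G}(G)$ and $b(G)$ directly (these are of bounded degree), checking the ratio is at most $11$.

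The substantive work is cases (i) and (ii). For the $r$-subset action (i), we are given $n \geq 4r^2$ (the excluded range $2r < n < 4r^2$ is exactly what Theorem~\ref{main} omits), and in this regime $b(G)$ is known precisely from \cite{dvrd, MeSp}; the greedy algorithm here amounts to repeatedly choosing $r$-subsets that maximally cut down the stabiliser, and one should be able to show that each greedy choice behaves essentially like an optimal choice up to a constant factor — the key point being that when $n$ is large relative to $r$, a "generic" $r$-set is nearly as good as the best one, so $\mathcal{G}(G)$ and $b(G)$ differ by at most a bounded multiple. For the partition action (ii), one uses the explicit formulae of Morris--Spiga \cite{ms}: the plan is to compare the greedy descent on partitions of $[kl]$ into $k$ parts of size $l$ against their formula for $b(G)$. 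In both cases the engine is a general lemma — which I would prove first — saying roughly that if at every stage of the greedy algorithm the largest orbit of the current stabiliser has size at least some power of the largest orbit length achievable by an \emph{optimal} next choice, then $\mathcal{G}(G) = O(b(G))$; combinatorially this reduces to counting orbits of subgroups of $\S_n$ on subsets/partitions.

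I would organise the argument around the \emph{ravenous} terminology just introduced: first establish closure-type properties (a finite union of $c_i$-ravenous families is $(\max c_i)$-ravenous; adding finitely many groups preserves ravenousness with a possibly larger constant), then handle each of the four cases as its own ravenous subfamily, and finally take the maximum of the four constants — tuning the bookkeeping so that this maximum comes out to $11$. The constant $11$ presumably arises from the $r$-subset or partition case, and pinning it down requires the sharp base-size formulae rather than crude estimates, so some care with the arithmetic of \cite{dvrd} and \cite{ms} will be needed.

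The main obstacle I anticipate is controlling the greedy algorithm in case (i) for the full range $n \geq 4r^2$, and in case (ii) across all $(k,l)$: unlike $b(G)$, which only asks for the \emph{existence} of a small base, $\mathcal{G}(G)$ forces us to understand what happens when points are chosen to be locally optimal, which can be far from globally optimal. The danger is a "slow start" where the first several greedy $r$-sets (or partitions) overlap badly and barely reduce the group, forcing $\mathcal{G}(G)$ to be large; ruling this out requires showing that a largest orbit of any point stabiliser (and iterated stabiliser) is still large enough — essentially an orbit-counting estimate uniform in the intermediate subgroup — and this uniformity, rather than any single computation, is where the real effort will go.
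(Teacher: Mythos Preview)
Your case split along the Burness--Guralnick--Saxl lines is exactly right, and case (iv) is indeed handled by direct computation. But your treatment of case (iii) contains a genuine error, and your plan for (i) and (ii) diverges substantially from the paper's approach.

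For case (iii) you propose to bound $\mathcal{G}(G)$ by an absolute constant via $\log|G|/\log|\Omega|$ plus Blaha-type estimates. This does not work: Blaha's bound gives only $\mathcal{G}(G)\leq d\,b(G)\log\log|\Omega|$, which is unbounded as $|\Omega|\to\infty$, and the irredundant-chain bound $\mathcal{G}(G)\leq\log_2|G|$ is even worse. The argument you want is a one-liner you have missed: if $G$ is \emph{transitive} and $b(G)=2$, then after the first greedy point $\alpha_1$ the stabiliser $G_{\alpha_1}$ has a regular orbit on $\Omega$ (namely the orbit of any $\beta$ with $G_{\alpha_1,\beta}=1$), and a regular orbit is automatically of maximal size $|G_{\alpha_1}|$; hence the second greedy point lies in a regular orbit and $\mathcal{G}(G)=2$ exactly. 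No estimates are needed.

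For cases (i) and (ii) the paper does \emph{not} proceed via a general ``greedy orbits are within a power of optimal orbits'' lemma. Instead it carries out a direct, highly structured combinatorial analysis of the greedy descent in each action. For $r$-subsets with $n\geq 4r^2$ the paper reinterprets each greedy choice via a point-by-point ``{\tt MetaGreedy}'' procedure, tracks the multiset of orbit sizes on $[n]$ through an invariant $u_i$, and proves that no $\mathcal{B}$-neighbourhood in $[n]$ exceeds size two (or three in a controlled way), yielding $\mathcal{G}(G)\leq 2n/r+1$ and hence $17/10$-ravenousness. For partitions the paper works with intersection arrays of pairs and triples of $(k,l)$-partitions, determines (often uniquely up to an equivalence) the array yielding the minimal two- or three-point stabiliser, and then applies a key reduction lemma (Lemma~\ref{key}) showing that once the stabiliser is a direct product of symmetric groups with enough blocks of a size not divisible by $k$, the remaining greedy steps number exactly $\lceil\log_k(\max\text{ block size})\rceil$. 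This requires a lengthy case analysis over small $k$ and residues $l\bmod k$; the constant $11$ emerges from the partition case (the subset case gives $17/10$), not from your proposed unified lemma. Your ``uniform orbit-counting estimate'' is the right \emph{worry}, but the paper resolves it by explicit construction rather than by any general principle, and it is not clear your proposed lemma can be formulated precisely enough to prove.
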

No significant effort has been made to optimise the value 11 --- it seems likely that it should be less than $2$.

 In Section~\ref{sec:sn} we prove Theorem~\ref{mainsnr}, which states that $\S_n$ and $\A_n$ acting on ${[n]\choose r}$ with $n\geq4r^2$ are 17/10-ravenous. In Section \ref{sec:parts} we prove Theorem~\ref{camskl}, which states that $\S_n$ and $\A_n$ acting on uniform partitions are 11-ravenous. Theorem~\ref{main} then follows from the observation that if $G$ is transitive and $b(G) = 2$ then $\mathcal{G}(G) = 2$, and explicit computations with the 18 groups in (iv) above.

\section{$\S_n$ and $\A_n$ on $r$-subsets}\label{sec:sn}

Let $n>2r \geq 4$. Let $\S_{n,r}$ and $\A_{n,r}$ be $\S_n$ and $\A_n$, respectively, in their actions on $\Omega:= {[n]\choose r}$. For the majority of this section we analyse $\S_{n,r}$, before deducing an analogous result for $\mathrm{A}_{n,r}$. We shall show that $\mathcal{F}_1:=\{\S_{n,r},\A_{n,r} : n\geq 4r^2\}$ is $17/10$-ravenous: see Theorem~\ref{mainsnr}.

We start with a condition on sets of $r$-sets which is equivalent to being a base. Let $\mathcal{B}$ be a set of $r$-subsets of $[n]$. For $u\in [n]$, define the \emph{neighbourhood} of $u$ to be $N_\mathcal{B}(u):=\{\alpha \in\mathcal{B} : u\in \alpha\}$. Then $\mathcal{B}$ is a base for $\S_{n,r}$ if and only if $N_\mathcal{B}(u)\ne N_\mathcal{B}(v)$ for all distinct $u,v\in [n]$. 

Fix $G=\mathrm{S}_{n,r}$, let $\alpha_i \in \Omega$ be the $i$th $r$-set chosen by the greedy algorithm, let $\mathcal{B}_i=\{\alpha_1,\alpha_2,\dots, \alpha_i\}$, and let $G_{i}=G_{(\mathcal{B}_i)}$. Note that in its action on $[n]$ the group $G_i$ is a direct product of symmetric groups on its orbits and so, up to $G$-conjugacy, $G_i$ is uniquely determined by the sizes of these orbits. 
The $G_i$-orbit size of each $\alpha \in \Omega$ is a product of binomial coefficients given by the size of the  intersection of $\alpha$ with each $G_i$-orbit on $[n]$. Therefore, we can think of the greedy algorithm at step $i+1$ as making a sequence of $r$ choices of points via the following meta-greedy algorithm. 
\begin{alg}[{\tt MetaGreedy}\text{}]

\noindent{Input:}
A collection $\mathcal{B}_i:=\{\alpha_1,\alpha_2,\dots, \alpha_i\}$ of $r$-subsets of $[n]$.\\
\noindent{Step $0$:}
Set $M_0:=\emptyset$.\\
\noindent{Step $1\leq j\leq r$:}
Let $m_j\in [n]\setminus M_{j-1}$ be any point from any $G_{i}$-orbit $\Delta\subseteq[n]$ that \linebreak maximises
\begin{equation}
    \label{star}
\frac{|\Delta\setminus M_{j-1}|}{|\Delta\cap M_{j-1}|+1},
\end{equation}
and let $M_j=M_{j-1}\cup\{m_j\}$. If \eqref{star} has maximum value 1 then in addition, choose $m_j$ to be a point with smallest $\mathcal{B}_i$-neighbourhood.\\
\noindent{Output:}
$\alpha_{i+1}:=M_r$.

\end{alg}

\begin{lemma}
Let $\alpha$ be an $r$-set. Then $|\alpha^{G_i}|$ is maximal amongst all elements of $\Omega$ if and only if there is a possible output $\alpha_{i+1}$ of {\tt MetaGreedy$(\mathcal{B}_i)$} such that $(G_i)_\alpha\cong (G_i)_{\alpha_{i+1}}$.
\end{lemma}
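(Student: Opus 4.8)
The plan is to show both directions of the equivalence by relating the greedy step's objective---maximising the $G_i$-orbit size $|\alpha^{G_i}|$---to the iterative objective optimised by \texttt{MetaGreedy}. Recall that $G_i$ acts on $[n]$ as a direct product of symmetric groups on its orbits $\Delta_1,\dots,\Delta_s$; for an $r$-set $\alpha$ with $|\alpha\cap\Delta_t|=a_t$, the orbit size is $\prod_{t=1}^s\binom{|\Delta_t|}{a_t}$. So $|\alpha^{G_i}|$ depends only on the \emph{profile} $(a_1,\dots,a_s)$, and the first observation to record is that the condition ``$(G_i)_\alpha\cong(G_i)_{\alpha_{i+1}}$'' is equivalent to $\alpha$ and $\alpha_{i+1}$ having the same profile (since the point stabiliser in a product of symmetric groups is determined up to isomorphism, indeed up to $G_i$-conjugacy, by how many points of each orbit are fixed). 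Thus the lemma reduces to: a profile $(a_t)$ maximises $\prod_t\binom{|\Delta_t|}{a_t}$ (subject to $\sum a_t=r$, $0\le a_t\le|\Delta_t|$) if and only if it is realised by some run of \texttt{MetaGreedy}$(\mathcal{B}_i)$.

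Next I would analyse \texttt{MetaGreedy} as a discrete greedy maximisation of the product $\prod_t\binom{|\Delta_t|}{a_t}$. The key identity is that adding one more point to orbit $\Delta$ when it already contains $c$ chosen points multiplies the running product by $\binom{|\Delta|}{c+1}/\binom{|\Delta|}{c}=(|\Delta|-c)/(c+1)$, which is exactly the quantity~\eqref{star} with $|\Delta\setminus M_{j-1}|=|\Delta|-c$ and $|\Delta\cap M_{j-1}|=c$. Hence at each step \texttt{MetaGreedy} picks the orbit giving the largest multiplicative gain. The crucial structural fact making the greedy choice globally optimal is that for fixed $|\Delta|$ the ratios $(|\Delta|-c)/(c+1)$ are strictly decreasing in $c$; this log-concavity of the binomial coefficients means the set of achievable increments, sorted in decreasing order, is obtained precisely by the greedy rule, so after $r$ steps \texttt{MetaGreedy} has selected the $r$ largest increments available and therefore maximises the product. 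I would spell this out via an exchange argument: given any profile $(a_t)$ attaining the maximum product, one shows by induction on $j$ that there is a run of \texttt{MetaGreedy} whose first $j$ choices are consistent with building up to $(a_t)$, using that if the greedy increment at step $j$ went to an orbit not needing more points in $(a_t)$, one could swap to obtain a profile with product at least as large. Conversely, any \texttt{MetaGreedy} output, having greedily accumulated the top $r$ increments, attains the maximum.

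One technical point needs care: when~\eqref{star} has maximum value $1$, several orbits (or further points within an orbit that is already ``half full'') tie, and the product is insensitive to which tied choice is made---so at that stage any profile completion gives the same product value. This is why the algorithm is allowed the secondary tie-break ``smallest $\mathcal{B}_i$-neighbourhood'': it does not affect $|\alpha^{G_i}|$ and hence does not affect the equivalence being proved here, so for this lemma I would simply note that the ratio-$1$ case leaves the product unchanged and all tied completions are equally optimal, deferring the role of the neighbourhood tie-break to later sections. Assembling these pieces: ``$\Rightarrow$'' given $\alpha$ of maximal orbit size, its profile maximises the product, so by the exchange argument some \texttt{MetaGreedy} run produces an $\alpha_{i+1}$ with the same profile, whence $(G_i)_\alpha\cong(G_i)_{\alpha_{i+1}}$; ``$\Leftarrow$'' any \texttt{MetaGreedy} output $\alpha_{i+1}$ has maximal orbit size, and $(G_i)_\alpha\cong(G_i)_{\alpha_{i+1}}$ forces $\alpha$ to share its profile and hence its (maximal) orbit size.

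The main obstacle I anticipate is making the exchange/induction argument fully rigorous in the presence of ties at ratio $1$, i.e.\ verifying that every optimal profile---not just one canonical optimal profile---is reachable by \emph{some} legal run of \texttt{MetaGreedy}, and conversely that the algorithm never ``paints itself into a corner'' where an early greedy choice (forced only by the ratio, before the secondary tie-break kicks in) makes some optimal profile unreachable. The clean way around this is to prove the single clean statement that the multiset of $r$ increments chosen by any run of \texttt{MetaGreedy} equals the multiset of the $r$ largest available increments (counted with the multiplicity coming from how many points each orbit can still absorb), which by log-concavity is run-independent, and that a profile is optimal if and only if its increment multiset is this same top-$r$ multiset; the matching of profiles to increment multisets is then a bijection and the lemma drops out.
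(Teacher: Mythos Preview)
Your overall strategy---view {\tt MetaGreedy} as greedily collecting multiplicative increments $(|\Delta|-c)/(c+1)$ and use log-concavity to show it reaches an optimal profile---is essentially the paper's strategy, and your exchange/induction argument for the steps with ratio strictly greater than $1$ is exactly what the paper does. The backward direction is also fine once you drop the ``same profile'' intermediate step and just use orbit--stabiliser: isomorphic stabilisers have equal order, hence equal orbit size.

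There is, however, a genuine gap in the forward direction. Your opening reduction asserts that $(G_i)_\alpha\cong (G_i)_{\alpha_{i+1}}$ is \emph{equivalent} to $\alpha$ and $\alpha_{i+1}$ having the same profile. Only one implication holds: equal profiles give conjugate (hence isomorphic) stabilisers, but isomorphic stabilisers do not force equal profiles. Concretely, take $G_i$-orbits of sizes $3$ and $3$ on $[6]$ with $r=3$ and $\mathcal{B}_1=\{\{1,2,3\}\}$. Both profiles $(2,1)$ and $(1,2)$ are optimal, with stabiliser $S_2\times S_1\times S_1\times S_2$ in either case; yet the neighbourhood tie-break forces every legal run of {\tt MetaGreedy} to output profile $(1,2)$. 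So for $\alpha$ with profile $(2,1)$ there is \emph{no} run producing the same profile, and your forward argument as written does not terminate.

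What rescues the lemma---and what the paper supplies---is the observation that once the maximum of \eqref{star} equals $1$, each further choice moves some orbit from state $(c,c{+}1)$ to $(c{+}1,c)$, so the unordered pair $\{c,c{+}1\}$ contributed to the stabiliser multiset is unchanged. Hence the \emph{isomorphism type} of $(G_i)_{M_j}$, not merely the product $|M_j^{G_i}|$, is frozen from that step onward; since $\alpha\supseteq M_{j-1}$ and $\alpha$'s remaining choices are also ratio-$1$ (by optimality), $(G_i)_\alpha\cong (G_i)_{M_{j-1}}\cong (G_i)_{\alpha_{i+1}}$ regardless of how the tie-break steers {\tt MetaGreedy}. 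Your discussion of the ratio-$1$ case notes only that ``the product is insensitive,'' and your proposed clean fix via the top-$r$ increment multiset again controls only the product; neither gives the stabiliser isomorphism the lemma actually asks for. (The claimed bijection between profiles and increment multisets is also false---orbits of equal size already break it---though this is a side issue.) Add the ratio-$1$ stabiliser-invariance observation and your argument goes through.
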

\begin{proof}
Let $\mathcal{O}$ be the set of $G_i$-orbits on $[n]$. First, suppose that $|\alpha^{G_i}|$ is maximal so that
$$\left|\alpha^{G_i}\right|=\prod_{\Delta\in\mathcal{O}}{|\Delta|\choose |\Delta\cap \alpha|}\geq \prod_{\Delta\in\mathcal{O}}{|\Delta|\choose |\Delta\cap \beta|},$$ 
for all $\beta \in \Omega$. We expand the binomial coefficients to obtain the inequality $$\left|\alpha^{G_i}\right|=\prod_{\Delta\in\mathcal{O}}\prod_{l=0}^{|\Delta\cap \alpha|-1}\frac{|\Delta|-l}{l+1}\geq \prod_{\Delta\in\mathcal{O}}\prod_{l=0}^{|\Delta\cap \beta|-1}\frac{|\Delta|-l}{l+1}.$$ 

Consider  {\tt MetaGreedy$(\mathcal{B}_i)$}. The set $\alpha$ contains a point $m_1$ from a largest $G_i$-orbit on $[n]$, so {\tt MetaGreedy$(\mathcal{B}_i)$} can choose $M_1 \subseteq \alpha$. Suppose inductively that $M_{j}\subseteq \alpha$ for some $j\geq 1$. Note $$|M_{j}^{G_i}|=\prod_{\Delta\in\mathcal{O}}\prod_{l=0}^{|\Delta\cap M_{j}|-1}\frac{|\Delta|-l}{l+1},$$ so that $$|\alpha^{G_i}|=|M_{j}^{G_i}|\cdot \prod_{\Delta\in\mathcal{O}}\prod_{l=|\Delta\cap M_{j}|}^{|\Delta\cap \alpha|-1}\frac{|\Delta|-l}{l+1}.$$ 
Since $|\alpha^{G_i}|$ is maximal,  $\alpha\setminus M_{j}$ contains a point from at least one orbit $\Delta$ that maximises \eqref{star}. Therefore if $\frac{|\Delta|-|\Delta\cap M_{j}|}{|\Delta\cap M_{j}|+1}\ne 1$ then {\tt MetaGreedy$(\mathcal{B}_i)$} can pick $m_{j+1}\in \Delta\setminus M_{j}$. If this holds for all $j$ then by induction {\tt MetaGreedy$(\mathcal{B}_i)$} can return $\alpha_{i+1} = M_{r} = \alpha$. 
Otherwise there exists a $j$ for which \eqref{star} has maximum value $1$. Since this maximum value is non-increasing in $j$, and $n>2r$, the maximum value of~\eqref{star} is 1 for all $s\geq j$. It follows that {\tt MetaGreedy$(\mathcal{B}_i)$} chooses each $m_{s} \in [n]$ to have minimum neighbourhood size. Since each stabiliser is a product of symmetric groups on its orbits we deduce that $(G_i)_{M_{j-1}}\cong (G_i)_{M_{j}}\cong (G_i)_\alpha$.  Hence in both cases there is a permissible output ${\alpha_{i+1}}$ of {\tt MetaGreedy$(\mathcal{B}_i)$} such that $(G_i)_{\alpha}\cong (G_i)_{\alpha_{i+1}}$.

For the other direction it suffices to show that all outputs of {\tt MetaGreedy$(\mathcal{B}_i)$} yield $G_i$-orbits of the same size. Whenever \eqref{star} has maximum value $o$ attained by multiple orbits, {\tt MetaGreedy} will successively choose a point from each such orbit, and $|M_j^{G_i}|=o\cdot|M_{j-1}^{G_i}|$, so the result follows. 
\end{proof}

Thus without loss of generality, we may assume each $\alpha_i$ in a greedy base is chosen by {\tt MetaGreedy}. Comparisons of the value of \eqref{star} are therefore useful. It will be most helpful in the form given now. 

\begin{lemma}\label{orbcomp}
Let $\Delta_1$ and $\Delta_2$ be any two $G_i$-orbits on $[n]$. Then $$\frac{|\Delta_1\setminus \alpha_{i+1}|+1}{|\Delta_1\cap \alpha_{i+1}|}\geq \frac{|\Delta_2\setminus \alpha_{i+1}|}{|\Delta_2\cap \alpha_{i+1}|+1}.$$
\end{lemma}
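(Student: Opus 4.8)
The plan is to recognise both sides of the claimed inequality as consecutive values of the quantity~\eqref{star} during a run of {\tt MetaGreedy} that outputs $\alpha_{i+1}$, and then use the defining feature of {\tt MetaGreedy}: it always takes a point from an orbit on which~\eqref{star} is maximal.

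First I would dispose of a degenerate case: we may assume $|\Delta_1\cap\alpha_{i+1}|\ne 0$, since otherwise the left-hand side is to be read as $+\infty$ and there is nothing to prove. Now fix a run $M_0\subsetneq M_1\subsetneq\dots\subsetneq M_r=\alpha_{i+1}$ of {\tt MetaGreedy$(\mathcal{B}_i)$} witnessing $\alpha_{i+1}$ as an output (such a run exists by the convention adopted after the preceding lemma), and let $j$ be the largest index with $m_j\in\Delta_1$. Every point added after step $j$ lies outside $\Delta_1$, and every point of $\alpha_{i+1}$ is added at some step, so $M_{j-1}\cap\Delta_1=(\alpha_{i+1}\cap\Delta_1)\setminus\{m_j\}$; hence $|\Delta_1\cap M_{j-1}|=|\Delta_1\cap\alpha_{i+1}|-1$ and $|\Delta_1\setminus M_{j-1}|=|\Delta_1\setminus\alpha_{i+1}|+1$. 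Substituting, the value of~\eqref{star} for $\Delta_1$ with respect to $M_{j-1}$ is precisely the left-hand side of the asserted inequality.

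The rest is two short steps. Because {\tt MetaGreedy} chose $m_j\in\Delta_1$ at step $j$, the orbit $\Delta_1$ realises the maximum of~\eqref{star} with respect to $M_{j-1}$; in particular this maximum is at least the value of~\eqref{star} for $\Delta_2$ with respect to $M_{j-1}$, namely $|\Delta_2\setminus M_{j-1}|/(|\Delta_2\cap M_{j-1}|+1)$. Next, for a fixed orbit $\Delta$ the map $M\mapsto |\Delta\setminus M|/(|\Delta\cap M|+1)$ is non-increasing under enlarging $M$: adjoining a new point of $\Delta$ decreases the numerator by $1$ and increases the denominator by $1$, while adjoining a point outside $\Delta$ changes nothing. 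Applying this with $\Delta=\Delta_2$ and using $M_{j-1}\subseteq M_r=\alpha_{i+1}$ gives $|\Delta_2\setminus M_{j-1}|/(|\Delta_2\cap M_{j-1}|+1)\ge|\Delta_2\setminus\alpha_{i+1}|/(|\Delta_2\cap\alpha_{i+1}|+1)$, which is the right-hand side. Chaining the three (in)equalities proves the lemma.

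I do not expect a genuine obstacle here: the content lies entirely in spotting that~\eqref{star} is the invariant to track along the run of {\tt MetaGreedy}. The only place that needs a little care is the combinatorial bookkeeping of the second paragraph --- pinning down the step $j$ and checking that $M_{j-1}$ meets $\Delta_1$ in exactly $|\Delta_1\cap\alpha_{i+1}|-1$ points --- together with the elementary monotonicity observation, both of which are routine.
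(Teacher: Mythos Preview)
Your proof is correct and follows essentially the same route as the paper: pick the last step $j$ at which {\tt MetaGreedy} selects a point of $\Delta_1$, use the maximality of \eqref{star} at that step to compare with $\Delta_2$, and then the monotonicity of \eqref{star} in $M$ to pass from $M_{j-1}$ to $\alpha_{i+1}$. The only addition is your explicit treatment of the degenerate case $|\Delta_1\cap\alpha_{i+1}|=0$, which the paper leaves implicit.
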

\begin{proof}
Consider {\tt MetaGreedy$(\mathcal{B}_i)$}. Let $j$ be maximal subject to $m_{j}\in \Delta_1$. Then $$\frac{|\Delta_1\setminus M_{j-1}|}{|\Delta_1\cap M_{j-1}|+1}\geq \frac{|\Delta_2\setminus M_{j-1}|}{|\Delta_2\cap M_{j-1}|+1}\geq \frac{|\Delta_2\setminus \alpha_{i+1}|}{|\Delta_2\cap \alpha_{i+1}|+1}.$$ 
The maximality of $j$ implies $\Delta_1\cap \alpha_{i+1}=(\Delta_1\cap M_{j-1})\cup\{m_{j}\}$, and so $|\Delta_1\setminus M_{j-1}|=|\Delta_1\setminus \alpha_{i+1}|+1$, and $|\Delta_1\cap M_{j-1}|+1=|\Delta_1\cap \alpha_{i+1}|$, thus the result follows.
\end{proof}

Let $s$ be maximal subject to $\alpha_1,\alpha_2,\dots, \alpha_s$ being disjoint, let $E_i=[n]\setminus\left(\bigcup_{j=1}^i \alpha_j\right)$ be the points in no set in $\mathcal{B}_i$, and let $e_i=|E_i|$.
Conversely, let $F_i=\{m\in [n] : |N_{\mathcal{B}_i}(m)|>1\}$ be the set of points in more than one set in $\mathcal{B}_i$. If two elements of $\mathcal{B}_i$ intersect in a single point $m \in [n]$, then $m$ is fixed by $G_j$ for all $j \ge i$; we shall prove in Lemma~\ref{p>3r} 
that in many cases the pairwise intersections of elements of $\mathcal{B}_i$ have size at most 1, and hence for most $i \in \{1, \ldots,\mathcal{G}(G)\}$ the set $F_i$ consists of all points fixed by $G_i$ with neighbourhoods of
size at least two. 
Eventually we shall prove that in fact all points in $F_i$ are fixed by $G_j$ for $j \ge i$.

We shall use Lemma~\ref{orbcomp} to show that for $n\geq 4r^2$ and any $i$, there is a large collection of $G_i$-orbits on $[n]$ all of size differing by at most one. 
For $i \in \{1, \ldots, s\}$ let $\mathcal{U}_i=\mathcal{B}_i$. For $i \ge s+1$, recursively define $\mathcal{U}_{i}:=\{A\setminus F_{i}  \, : \, A\in \mathcal{U}_{i-1}\},$  so that for $i \ge s+1$  the set $\mathcal{U}_{i}$ consists of the subsets of each of $\alpha_1,\dots,\alpha_s$ containing all points belonging to a unique element of $\mathcal{B}_i$. Thus, $\mathcal{U}_i$ is a collection of $G_i$-orbits in its natural action on $\alpha_1\sqcup\cdots \sqcup \alpha_s\subseteq[n]$. For $u \in \{0, \ldots, r\}$ 
 and $i\in\{1,2,\dots,\mathcal{G}(G)\}$, let 
$$\mathcal{O}_{u,i}=\{\alpha_j\setminus F_i  \, : \, j \le i, \ |\alpha_j \setminus F_i |\in\{u,u-1\}\},$$ 
which we shall view as a multiset, just to allow for multiple occurrences of the empty set.
Then $\mathcal{O}_{u,i}$ is the multiset of all $G_i$-orbits of size  $u$ or $u-1$ consisting of points chosen exactly once by step $i$. Now  for $i>1$ let $u_i$ be the largest $u \in \{0, \ldots, r\}$ satisfying 
\begin{enumerate}
    \item[(i)] $\mathcal{U}_i\subseteq \mathcal{O}_{u,i}$; and
    \item[(ii)] $\mathcal{O}_{u,i}$ contains at least one $u$-set other than $\alpha_i\setminus F_i$,
\end{enumerate}
or undefined if no such $u$ exists.
The next result shows that $u_i$ exists for all $i \ge 2$.

\begin{lemma}\label{p>3r}
Suppose $n\geq 4r^2\geq 16$. Then the following all hold.
\begin{enumerate}
\item[\emph{(i)}]
$s\geq 3r$. 
\item[\emph{(ii)}] $u_2=r$, and $u_i\in\{u_{i-1},u_{i-1}-1\}$ for all $i>2$, so $u_i$ exists for all $i$. Furthermore, {\tt Metagreedy}$(\mathcal{B}_i)$ never chooses more than one point from any set in $\mathcal{O}_{u_i,i}$.
\item[\emph{(iii)}] $|\mathcal{O}_{u_i, i}| \ge 3r$ for all $i \ge s$. 
\item[\emph{(iv)}] Suppose $u_{i}\geq 2$ and $|\alpha\setminus F_i|\leq u_i$ for all $\alpha\in\mathcal{B}_i$. If every element of $F_{i}$ is fixed by $G_{i}$ and has a neighbourhood of size exactly two, then the same is true for $F_{i+1}$ and $G_{i+1}$.
\end{enumerate}
\end{lemma}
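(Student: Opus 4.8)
I would prove the four parts in order, deriving (ii) and (iii) together by induction on $i$, and I would first isolate the following observation, which drives everything. \emph{Claim:} if $\mathcal{O}$ is a set of at least $3r$ distinct $G_i$-orbits on $[n]$, each of size $u$ or $u-1$, then no run of {\tt MetaGreedy}$(\mathcal{B}_i)$ chooses more than one point from any orbit in $\mathcal{O}$. Indeed, if a second point were taken from some $\Delta\in\mathcal{O}$ at a given step, then \eqref{star} for $\Delta$ there would equal $(|\Delta|-1)/2\le(u-1)/2$; but fewer than $r$ points have been chosen so far, so by $|\mathcal{O}|\ge 3r$ some member of $\mathcal{O}$ is still untouched and contributes value $\ge u-1>(u-1)/2$ (when $u\ge 2$; a second point is impossible if $u\le 1$), contradicting that {\tt MetaGreedy} picks a maximiser of \eqref{star}.

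For \emph{(i)}, if $0\le i\le 3r-1$ and $\alpha_1,\dots,\alpha_i$ are pairwise disjoint, the $G_i$-orbits on $[n]$ are $\alpha_1,\dots,\alpha_i$ and $E_i$, where $e_i=n-ir\ge 4r^2-(3r-1)r=r^2+r$. Hence after any $t<r$ points of $E_i$ have been chosen, \eqref{star} for $E_i$ is at least $(r^2+r-t)/(t+1)\ge (r^2+1)/r>1$, strictly exceeding its value $1$ on every singleton orbit; so {\tt MetaGreedy}$(\mathcal{B}_i)$ is forced to keep choosing inside $E_i$ and $\alpha_{i+1}\subseteq E_i$. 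Iterating, $\alpha_1,\dots,\alpha_{3r}$ are pairwise disjoint, so $s\ge 3r$.

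For \emph{(ii) and (iii)}, I would show by induction on $i$ that $u_i$ is defined with $\mathcal{U}_i\subseteq\mathcal{O}_{u_i,i}$ as multisets; since $|\mathcal{U}_i|=s\ge 3r$ for $i\ge s$ this yields (iii), and the ``furthermore'' clause follows by applying the Claim to $\mathcal{O}_{u_i,i}$, whose size is at least $3r$ (by (iii) for $i\ge s$, and equal to $i$ for $3r\le i<s$; while for $2\le i\le 3r-1$ part (i) shows $\mathcal{B}_i=\mathcal{O}_{u_i,i}$ is left untouched). The base $2\le i\le s$ is explicit: $F_i=\emptyset$, every member of $\mathcal{B}_i$ has size $r$, and $u_i=r$. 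For $i>s$, apply the Claim to the choice of $\alpha_i$, i.e. to {\tt MetaGreedy}$(\mathcal{B}_{i-1})$ with $\mathcal{O}=\mathcal{O}_{u_{i-1},i-1}$ (legitimate by the inductive hypothesis): $\alpha_i$ meets each such orbit in at most one point, so each $\alpha_j\setminus F_i$ with $j\le s$ has size in $\{u_{i-1},u_{i-1}-1,u_{i-1}-2\}$. The main obstacle is to rule out all three sizes occurring: since {\tt MetaGreedy} picks a maximiser of \eqref{star}, it takes a point from a size-$(u_{i-1}-1)$ orbit of $\mathcal{O}_{u_{i-1},i-1}$ only when no size-$u_{i-1}$ orbit of that family is untouched (the latter having the larger value $u_{i-1}$), so the orbits touched by $\alpha_i$, if they include one of size $u_{i-1}-1$, include all those of size $u_{i-1}$; and as at most $r$ of the at least $3r$ orbits are touched, a piece of the largest surviving size always remains in $\mathcal{U}_i$. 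Hence the sizes occurring in $\mathcal{U}_i$ lie in $\{u_{i-1},u_{i-1}-1\}$ or in $\{u_{i-1}-1,u_{i-1}-2\}$, so $u_i$ is defined and equals $u_{i-1}$ or $u_{i-1}-1$, closing the induction.

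For \emph{(iv)}, under the stated hypotheses I would first show $\alpha_{i+1}\cap F_i=\emptyset$. Each $m\in F_i$ is fixed by $G_i$, so $\{m\}$ is a $G_i$-orbit on which \eqref{star} has value $1$, and $|N_{\mathcal{B}_i}(m)|=2$; but by (iii) there are at least $3r>r$ orbits in $\mathcal{O}_{u_i,i}$, so at every step at which a point remains to be chosen {\tt MetaGreedy}$(\mathcal{B}_i)$ has an untouched such orbit available, which is either of size $\ge 2$, hence of value $>1$ in \eqref{star}, or (since $u_i\ge 2$) a single point of neighbourhood size $1$ at value $1$; in either case $m$ is passed over, on the value of \eqref{star} or on the neighbourhood tie-break. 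Thus every $m\in F_i$ keeps neighbourhood size $2$ in $\mathcal{B}_{i+1}$ and stays fixed by $G_{i+1}\le G_i$. A point $m\in F_{i+1}\setminus F_i$ lies in $\alpha_{i+1}$ and in exactly one $\alpha_j$ with $j\le i$ (not two, else $m\in F_i$), so $|N_{\mathcal{B}_{i+1}}(m)|=2$ and its $G_{i+1}$-orbit is $\alpha_{i+1}\cap(\alpha_j\setminus F_i)$; since $|\alpha_j\setminus F_i|\le u_i$ by hypothesis, {\tt MetaGreedy} takes at most one point from the orbit $\alpha_j\setminus F_i$ --- by the Claim when it has size $u_i$ or $u_i-1$, and otherwise because a second point from it would give \eqref{star}-value less than $u_i-1$, beaten by an untouched orbit of $\mathcal{O}_{u_i,i}$ --- so that orbit has size at most $1$ and $m$ is fixed by $G_{i+1}$. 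This establishes (iv).
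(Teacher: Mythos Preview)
Your argument tracks the paper's closely and is essentially correct except for one real slip in part~(i). When $\alpha_1,\dots,\alpha_i$ are pairwise disjoint, the $G_i$-orbits on $[n]$ competing with $E_i$ are the $r$-sets $\alpha_1,\dots,\alpha_i$, not ``singleton orbits'', and on each untouched $\alpha_j$ the quantity \eqref{star} equals $r$, not $1$. So your inequality $(r^2+1)/r>1$ is the wrong comparison and does not by itself force {\tt MetaGreedy} to stay in $E_i$. The fix is immediate---the same computation gives $(r^2+1)/r=r+1/r>r$---but as written the justification fails. (The paper proves (i) differently: it lets $s$ be whatever it is and applies Lemma~\ref{orbcomp} with $(\Delta_1,\Delta_2)=(\alpha_j,E_s)$ for some $\alpha_j$ meeting $\alpha_{s+1}$, obtaining $r\ge (n-sr-r+1)/r$ and hence $s\ge 3r$.)

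For (ii)--(iv) your reasoning matches the paper's. Your isolated \emph{Claim} is exactly the paper's observation $(u-1)/2<u-1$ for $u\ge 2$ combined with pigeonhole on $|\mathcal{O}_{u_{i},i}|\ge 3r$, and the induction step ``touch a $(u_{i-1}-1)$-orbit only once all $u_{i-1}$-orbits are touched'' is the paper's argument verbatim. The paper dismisses (iii) and (iv) with a single line ``follow from (i) and (ii)''; your expanded proof of (iv) is correct, with the minor remark that for an orbit $\alpha_j\setminus F_i$ of size below $u_i-1$ even the \emph{first} point from it is beaten by any untouched member of $\mathcal{O}_{u_i,i}$, so your ``second point'' estimate in that case is not needed.
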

\begin{proof}
By definition of $s$, there exists an $i \le s$ such that
$\alpha_i \cap \alpha_{s+1} \neq \emptyset$, so Lemma~\ref{orbcomp} with $(\Delta_1, \Delta_2) = (\alpha_{i}, E_s)$  shows that $$\frac{r-1+1}{1}\geq \frac{(n-sr)-(r-1)}{(r-1)+1}.$$ Therefore $s\geq (n-r^2-(r-1))/r\geq (3r^2-(r-1))/r>3r-1$, but $s$ is an integer hence (i) holds.

We show (ii) inductively. For $2\leq i\leq s$ by definition $\mathcal{U}_i=\mathcal{B}_i = \mathcal{O}_{r,i}$ so $u_i = r$ and $F_i$ is empty. Let $i>s$. By induction, there is some $u_{i-1}$ such that all $s$ sets in $\mathcal{U}_{i-1}$ have size in $\{u_{i-1},u_{i-1}-1\}$, so by (i) the group $G_{i-1}$ has at least $3r$ orbits on $[n]$ of size either $u_{i-1}$ or $u_{i-1}-1$. Now, $(u_{i-1}-1)/2\leq (u_{i-1}-1)/1$ with equality if and only if $u_{i-1}=1$ so from \eqref{star} we deduce the ``furthermore" statement.  Moreover, {\tt MetaGreedy$(\mathcal{B}_i)$} only chooses a point from an orbit of size $u_{i-1}-1$ if it has already chosen a point from every orbit of size $u_{i-1}$, so the result follows. 

Parts (iii) and (iv) now follow from Parts (i) and (ii).
\end{proof}

Since Lemma~\ref{p>3r}(ii) shows that $u_i$ is uniquely determined by $i$, we shall write $\mathcal{O}_i$ instead of $\mathcal{O}_{u_i,i}$. We say that an  $\alpha \in\mathcal{B}_i$ is \emph{excessive} if $|\alpha \setminus F_i|>u_i$, and a collection of $r$-sets is \emph{excessive} if at least one member is excessive. 
If we never encounter excessive sets, then our greedy bases are well-behaved.

\begin{prop}\label{prop:noexcess}
Suppose $n\geq 4r^2 \ge 16$, and let $\mathcal{B}$ be a greedy base for $\mathrm{S}_{n,r}$. If no $\mathcal{B}_i$ is excessive, then $|\mathcal{B}|\leq \frac{2n}{r}$.
\end{prop}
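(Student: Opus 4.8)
The plan is to combine the orbit description of $G_i$ established above with a double-counting identity for point--set incidences. Write $\mathcal B=\mathcal B_m=\{\alpha_1,\dots,\alpha_m\}$ for the greedy base in question, and put $D_i:=\{u\in[n]:|N_{\mathcal B_i}(u)|=1\}$, so that $[n]$ is partitioned by the $G_i$-orbits $E_i$, the singletons comprising $F_i$, and the pairwise disjoint sets $\alpha_j\setminus F_i$ ($j\le i$), whose union is $D_i$. Since $\mathcal B$ is a base, $G_m=1$, so $e_m\le1$ and $|\alpha_j\setminus F_m|\le1$ for every $j$. Call a point selected by {\tt MetaGreedy} during the construction of some $\alpha_{i+1}$ a \emph{redundant choice} if it already lies in at least two members of $\mathcal B_i$ (equivalently, in $F_i$), and let $R$ be the total number of redundant choices made while building $\mathcal B$.

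The first step is the identity
\[
 rm \;=\; 2n-2e_m-|D_m|+R.
\]
Counting incidences two ways, $rm=\sum_{j\le m}|\alpha_j|=\sum_{u\in[n]}|N_{\mathcal B}(u)|$, where a point of $E_m$ contributes $0$ and a point of $D_m$ contributes $1$; for $u\in F_m$ one tracks the steps at which $u$ was placed into a member of $\mathcal B$, noting that the first placement had $u\in E$, exactly one had $u\in D$ (and moved $u$ into $F$), and every later one was a redundant choice, whence $|N_{\mathcal B}(u)|=2+R_u$ with $R_u\ge0$. Summing gives $rm=|D_m|+2|F_m|+R$, and $|F_m|=n-e_m-|D_m|$ yields the identity. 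Since $e_m,|D_m|\ge0$, it now suffices to prove $R=0$, for then $rm\le2n$.

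So I would show that {\tt MetaGreedy} never makes a redundant choice, by a case analysis on $u_i$ at each step $i+1$. For $i<s$ this is trivial, as $F_i=\emptyset$, so assume $i\ge s$ and use Lemma~\ref{p>3r}. If $u_i\ge3$: by Lemma~\ref{p>3r}(iii) there are at least $3r$ orbits of size $u_i$ or $u_i-1\ge2$, and by Lemma~\ref{p>3r}(ii) {\tt MetaGreedy} takes at most one point from each; since these orbits (together with $E_i$) lie outside $F_i$ and are preferred by {\tt MetaGreedy} to the singleton orbits of $F_i$, it fills $\alpha_{i+1}$ without touching $F_i$. If $u_i=2$: non-excessiveness forces every cell $\alpha_j\setminus F_i$ to have size at most $2$, so Lemma~\ref{p>3r}(iii) provides at least $3r$ nonempty cells, each of size $1$ or $2$; taking one point of each of $r$ of them --- all preferred to any point of $F_i$, the size-$1$ ones by the neighbourhood tie-break --- again avoids $F_i$. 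Finally, if $u_i=1$: non-excessiveness forces $e_i\le2$, since if $e_i\ge3$ then the greedy rule and tie-break would lead {\tt MetaGreedy} to put at least two points of $E_i$ into $\alpha_{i+1}$, giving $|\alpha_{i+1}\setminus F_{i+1}|\ge2>1\ge u_{i+1}$, an excessive set. Hence $G_i=1$ within one more step; moreover at the first step $i_1$ with $u_{i_1}=1$ one has $|D_{i_1}|\ge|D_{i_1-1}|-r\ge3r-r=2r$ (each of the $\ge3r$ orbits furnished by Lemma~\ref{p>3r}(iii) at the preceding step, where $u=2$, contributes at least $1$ to $|D_{i_1-1}|$, and step $i_1$ removes at most $r$ points from $D$), so the lone remaining step has at least $2r>r-1$ size-$1$ cells with which to pad $\alpha_{i_1+1}$ and, once more, avoids $F_i$. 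Thus $R=0$ and $|\mathcal B|=m\le2n/r$.

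The two places demanding care are the incidence-tracking in the identity (checking that every point of $F_m$ contributes exactly $2$ plus its redundant choices) and the cases $u_i\in\{1,2\}$: there one must verify precisely that {\tt MetaGreedy}, whenever it is forced to select points at which the maximum value of~\eqref{star} is $1$, still has enough points outside $F_i$ to finish the step --- supplied by the $\ge3r$ small orbits of Lemma~\ref{p>3r}(iii), and in the $u_i=1$ case by the near-instant termination that non-excessiveness imposes once the residual orbit $E_i$ would otherwise need to spawn a cell of size at least $2$.
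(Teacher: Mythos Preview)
Your argument is correct and follows the same route as the paper: both show that every point of $[n]$ lies in at most two members of $\mathcal{B}$ (equivalently, your $R=0$) and then double-count incidences to obtain $|\mathcal{B}|\,r\le 2n$. The paper compresses your case analysis for $u_i\ge2$ into a single inductive appeal to Lemma~\ref{p>3r}(iv) --- which is precisely what justifies your opening assertion that $F_i$ consists of singleton $G_i$-orbits --- and then handles the $u_i=1$ endgame with the same ``$e_i$ is small, so one more step finishes'' reasoning you give.
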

\begin{proof}
     Let $i$ be maximal such that $u_i=2$. It follows inductively from Lemma~\ref{p>3r}(iv) that every element of $F_{i+1}$ is fixed by $G_{i+1}$ and has a neighbourhood of size two. Since $u_{i+1}=1$, by Lemma~\ref{p>3r}(ii) and (iii) there are at least $2r$ singleton sets in $\mathcal{O}_{i+1}$, and at most $r$ copies of the empty set. Since $u_{i+1}=1$, and $\mathcal{B}_{i+1}$ is not excessive,~\eqref{star} implies that $e_{i} = |E_i| \leq 3$, and hence $e_{i+1}\leq 2$. We deduce that $\mathcal{B}_{i+2}$ is a base with no neighbourhoods of size greater than two. 
     
     Thus double counting pairs $(\alpha,a)$ such that $a\in\alpha\in\mathcal{B}$ shows that $$|\mathcal{B}|r=\sum_{a\in [n]}|N_{\mathcal{B}}(a)|\leq 2n,$$ hence the result.
\end{proof}

We now consider the possibility that some $\mathcal{B}_i$ is excessive.

\begin{lemma}\label{ez}
Suppose $n\geq 4r^2 \ge 16$. Suppose that $\mathcal{B}_i$ is not excessive, but $\mathcal{B}_{i+1}$ is excessive. 
Then $\alpha_{i+1}$ is the unique excessive set in $\mathcal{B}_{i+1}$. 
Let $f= |\alpha_{i+1}\cap E_i|$ be the number of points chosen for the first time in $\alpha_{i+1}$. Then either
\begin{enumerate}
\item[\emph{(i)}]
$u_{i+1}=u_i=f-1$, $\mathcal{O}_{i+1}$ contains at least $r$ sets of size $u_i$, and $e_{i+1}\leq u_{i}^2+u_i$; or
\item[\emph{(ii)}]
$u_{i+1}+1=u_i\leq f\leq u_i+1$ and $e_{i+1}\leq u_i^2$.
\end{enumerate}
\end{lemma}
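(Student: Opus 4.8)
The plan is to reduce everything to the behaviour of a single orbit---the set $E_i$ of never-chosen points---across step $i+1$. The key preliminary observation is that $|\alpha_{i+1}\setminus F_{i+1}|=|\alpha_{i+1}\cap E_i|=f$: any point of $\alpha_{i+1}$ that already lay in some $\alpha_j$ with $j\le i$ now lies in at least two members of $\mathcal{B}_{i+1}$, hence in $F_{i+1}$, whereas each of the $f$ points of $\alpha_{i+1}\cap E_i$ lies in exactly one member of $\mathcal{B}_{i+1}$. Since $F_{i+1}\supseteq F_i$ and $\mathcal{B}_i$ is not excessive, $|\alpha_j\setminus F_{i+1}|\le|\alpha_j\setminus F_i|\le u_i$ for every $j\le i$. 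By Lemma~\ref{p>3r}(ii) we have $u_{i+1}\in\{u_i,u_i-1\}$, and these two options will yield parts (i) and (ii). If $u_{i+1}=u_i$, the displayed inequality already shows no $\alpha_j$ with $j\le i$ is excessive, so $\alpha_{i+1}$ is the unique excessive set and $f>u_{i+1}=u_i$. If $u_{i+1}=u_i-1$, I would first deduce from the maximality in the definition of $u_{i+1}$ that after step $i+1$ no orbit $\alpha_j\setminus F_{i+1}$ with $j\le i$ has size $u_i$ (otherwise $u=u_i$ would still satisfy both defining conditions), so again every such $\alpha_j$ is non-excessive, $\alpha_{i+1}$ is the unique excessive set, and $f>u_{i+1}=u_i-1$. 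Finally $f<r$: if $f=r$ then $\alpha_{i+1}$ is disjoint from $\mathcal{B}_i$, forcing $i+1\le s$ and hence $u_{i+1}=r\ge f$; in particular $\mathcal{B}_{i+1}$ is not a family of disjoint sets, so $i\ge s$ and Lemma~\ref{p>3r}(iii) applies.

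Next I would read off what {\tt MetaGreedy}$(\mathcal{B}_i)$ does. Apart from $E_i$ (of size $e_i$) and the at least $3r$ orbits of $\mathcal{O}_i$ (of size $u_i$ or $u_i-1$), every $G_i$-orbit on $[n]$ has size at most $u_i-2$ or lies inside $F_i$; since {\tt MetaGreedy} takes at most one point from each orbit of $\mathcal{O}_i$ (the ``furthermore'' in Lemma~\ref{p>3r}(ii)) and only $r$ points in all, it draws its points only from $E_i$ and from $\mathcal{O}_i$, choosing $E_i$ while \eqref{star} keeps it largest, then fresh size-$u_i$ orbits, then size-$(u_i-1)$ orbits. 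Two inequalities then fall out of \eqref{star}: because a fresh size-$u_i$ orbit is available but not chosen when the $f$th point is taken from $E_i$, one gets $e_i\ge f(u_i+1)-1$; and because $E_i$ is never again the strict maximum after its $f$th contribution, while a fresh size-$u_i$ orbit remains, one gets $e_{i+1}=e_i-f\le u_i(f+1)$. Applying Lemma~\ref{orbcomp} instead with $\Delta_1$ a size-$(u_i-1)$ orbit reduced by $\alpha_{i+1}$, when one exists, and $\Delta_2=E_i$, sharpens the last bound to $e_{i+1}\le(u_i-1)(f+1)$. I would also record a bound on $e_i$ coming from step $i$: as $\mathcal{B}_i$ is not excessive, the orbit $\alpha_i\setminus F_i=\alpha_i\cap E_{i-1}$ has size at most $u_i$, so {\tt MetaGreedy} took at most $u_i$ points from $E_{i-1}$ at step $i$, and the same greedy reasoning then gives $e_i\le u_{i-1}(u_i+1)$, with $u_{i-1}\in\{u_i,u_i+1\}$ by Lemma~\ref{p>3r}(ii).

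For part (i), $u_{i+1}=u_i$, so $f\ge u_i+1$. To pin $f$ to $u_i+1$ and to prove that $\mathcal{O}_{i+1}$ contains at least $r$ sets of size $u_i$, I would count reductions: {\tt MetaGreedy} reduces exactly $r-f$ orbits of $\mathcal{O}_i$, largest first, while $u_{i+1}=u_i$ forces at least $r$ orbits of size $u_i$ (other than the oversized $\alpha_{i+1}\setminus F_{i+1}$) to survive; together with $|\mathcal{O}_i|\ge 3r$ this fixes $f=u_i+1$ and gives the surviving-orbit count. Then $e_{i+1}=e_i-(u_i+1)\le u_{i-1}(u_i+1)-(u_i+1)$, which is $u_i^2-1$ when $u_{i-1}=u_i$ and $u_i^2+u_i$ when $u_{i-1}=u_i+1$; in either case $e_{i+1}\le u_i^2+u_i$.

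For part (ii), $u_{i+1}=u_i-1$ and $f\ge u_i$. By the first paragraph no orbit $\alpha_j\setminus F_{i+1}$ with $j\le i$ has size $u_i$, so $\alpha_{i+1}$ reduces every size-$u_i$ orbit of $G_i$; working largest first among the at least $3r$ orbits of $\mathcal{O}_i$, and using $f<r$, it must then also reduce a size-$(u_i-1)$ orbit unless the size-$u_i$ orbits number exactly $r-f$---and that residual case, as well as the value $f=u_i+1$, I would dispatch using the bound $e_i\le u_{i-1}(u_i+1)$ above. In the main case Lemma~\ref{orbcomp} gives $e_{i+1}\le(u_i-1)(f+1)$, the same counting caps $f$ at $u_i+1$ as claimed, and $e_{i+1}\le(u_i-1)(u_i+1)=u_i^2-1\le u_i^2$ when $f=u_i$. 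The step I expect to be the main obstacle is precisely this bookkeeping: the bounds coming straight from \eqref{star} and Lemma~\ref{orbcomp} are each roughly $u_i$ larger than claimed, so closing the gap needs exact control of how many orbits of sizes $u_i$ and $u_i-1$ are present before step $i+1$ and which of them $\alpha_{i+1}$ reduces---forcing it to touch a size-$(u_i-1)$ orbit and pinning $f$ to within one of $u_i$---together with the inductive control of $e_i$ through the previous steps.
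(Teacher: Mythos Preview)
Your approach matches the paper's---use Lemma~\ref{orbcomp} at steps $i$ and $i+1$ to bound $e_i$ and $f$, then split on $u_{i+1}$---and your inequalities $e_i\ge f(u_i+1)-1$ and $e_i\le u_{i-1}(u_i+1)$ are exactly the paper's \eqref{eq1} and \eqref{eq2}. But there is one genuine gap: you treat $u_{i-1}$ as an unknown in $\{u_i,u_i+1\}$, whereas the argument only closes once $u_{i-1}$ is pinned down, and the two cases force \emph{opposite} values.

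In case~(i), the claim that $\mathcal{O}_{i+1}$ contains at least $r$ sets of size $u_i$ does not follow from your ``count reductions'' argument: $u_{i+1}=u_i$ guarantees only \emph{one} surviving $u_i$-orbit, and $|\mathcal{O}_i|\ge 3r$ says nothing about how many of those $3r$ orbits have size $u_i$ rather than $u_i-1$. The paper first proves $u_{i-1}=u_i+1$ (your own bounds give this: $e_i\ge(u_i+1)^2-1$ together with $e_i\le u_{i-1}(u_i+1)$ forces $u_{i-1}>u_i$), and then observes that every $(u_i-1)$-set in $\mathcal{O}_i$ arose from a $u_i$-set in $\mathcal{O}_{i-1}$ reduced by $\alpha_i$, so there are at most $r$ of them; hence at least $2r$ sets in $\mathcal{O}_i$ have size $u_i$, and after $\alpha_{i+1}$ reduces at most $r-f$ of them, at least $r$ remain.

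In case~(ii) with $f=u_i+1$, your plan to ``dispatch using $e_i\le u_{i-1}(u_i+1)$'' fails if you allow $u_{i-1}=u_i+1$: that yields only $e_{i+1}\le(u_i+1)^2-(u_i+1)=u_i^2+u_i$, not $u_i^2$. The paper shows $u_{i-1}=u_i$ here: since $u_{i+1}=u_i-1$, every $u_i$-orbit in $\mathcal{O}_i$ was touched by $\alpha_{i+1}$, so there are at most $r-f$ of them and hence more than $2r$ sets in $\mathcal{O}_i$ have size $u_i-1$; were $u_{i-1}=u_i+1$, each such set would come from a reduction by $\alpha_i$, giving at most $r$, a contradiction. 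With $u_{i-1}=u_i$ one has $e_i\le u_i(u_i+1)$, and then $e_{i+1}=e_i-f\le u_i^2$ directly from $f\ge u_i$. (Incidentally, the paper bounds $f$ by applying Lemma~\ref{orbcomp} with an \emph{untouched} $(u_i-1)$-orbit as $\Delta_2$, giving $f\le(e_i+1)/u_i\le u_i+1$; your use of a \emph{touched} $(u_i-1)$-orbit as $\Delta_1$ gives the weaker $e_{i+1}\le(u_i-1)(f+1)$, which is why you needed the residual-case patch in the first place.)
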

\begin{proof}
By Lemma~\ref{p>3r}(ii) for $j \le i$ we can bound $|\alpha_j \cap F_i| \le |\alpha_j \cap F_{i+1}| + 1$, and if $|\alpha_j \setminus F_i| = u_i-1$ then $\alpha_{i+1} \cap \alpha_j \neq \emptyset$ only if $u_{i+1}=u_i-1$, whence $\alpha_{i+1}$ is the unique excessive set.

 Since $\mathcal{B}_i$ is not excessive, $|\alpha_i \setminus F_i| \le u_i<r$. Thus
there exists a $G_{i-1}$-orbit $\Delta$ of size $u_{i-1}$ such that $|\Delta\cap\alpha_i|\geq 1$. Now Lemma~\ref{orbcomp} with $(\Delta_1, \Delta_2) = (\Delta, E_{i-1})$ shows that 
\begin{equation}\label{eq1}
u_{i-1} \geq \frac{e_{i}}{e_{i-1}-e_i+1} \geq \frac{e_{i-1}-u_{i}}{u_{i}+1}
\end{equation}

Suppose that $u_{i+1}=u_i$. Lemma~\ref{p>3r}(ii) states that $u_{i-1}\leq u_i+1$, so working with the first and last expression in~\eqref{eq1} bounds  $e_{i-1}\leq u_i^2+3u_i+1$. Furthermore, the first inequality in \eqref{eq1} yields $$e_i\leq\frac{(e_{i-1}+1)(u_i+1)}{u_i+2}\leq \frac{(u_i^2+3u_i+2)(u_i+1)}{u_i+2}=u_i^2+2u_i+1.$$ 
Additionally, since $u_{i+1}=u_i$ there is a $u_i$-set $\Delta \in \mathcal{O}_i$ such that $|\Delta \cap \alpha_{i+1}| = 0$.
Therefore,  Lemma~\ref{orbcomp}  with $(\Delta_1,\Delta_2)=(E_i, \Delta)$ yields 
\begin{equation}\label{eq2}
u_i\leq \frac{e_i-f+1}{f},
\end{equation} 
so $f\leq (e_i+1)/(u_i+1)$. Since $e_i\leq u_i^2+2u_i+1$ it follows that $f\leq (u_i^2+2u_i+2)/(u_i+1)<u_i+2$. Moreover, $\alpha_{i+1}$ is excessive in $\mathcal{B}_{i+1}$, so $f>u_i$, whence $f=u_i+1$ proving our first claim. We deduce immediately that $e_{i+1}=e_i-f\leq u_i^2+u_i$, proving our third.

Since $e_i<e_{i-1}$, substituting for $f$ into \eqref{eq2} and combining with \eqref{eq1} demonstrates that $u_i<(e_{i-1}-u_i)/(u_i+1) \leq u_{i-1}.$ But $u_{i-1}\leq u_i+1$ by Lemma~\ref{p>3r}(ii), hence $u_{i-1}=u_i+1$. Since $|\mathcal{O}_{i}|\geq 3r$ and $u_{i-1}=u_i+1$, we deduce from Lemma~\ref{p>3r}(iii) that $\mathcal{O}_{i+1}$ contains at least $r$ sets of size $u_i$, as desired, hence (i) holds.

By Lemma~\ref{p>3r}(ii) we may now assume that $u_{i+1}=u_i-1$. Therefore there are more than $2r$ sets in $\mathcal{O}_{i+1}$ of size $u_{i+1}$, hence $u_{i-1}=u_i$ and there exists $\Delta 
\in \mathcal{O}_{i+1}$ of size $u_{i}-1$ with $|\Delta \cap \alpha_{i+1}| =  0$.  
By~\eqref{eq1}, $e_{i-1}\leq u_i^2+2u_i$, and $(u_i+1)e_i\leq u_i(e_{i-1}+1)$, whence $$e_i\leq \frac{u_i(u_i^2+2u_i+1)}{u_i+1}=u_i^2+u_i.$$ Now Lemma~\ref{orbcomp} with $(\Delta_1, \Delta_2) = (E_i, \Delta)$ shows that
$$u_i-1\leq\frac{e_i-f+1}{f}\leq \frac{u_i^2+u_i+1-f}{f},$$ 
hence $f\leq \left\lfloor \frac{u_i^2+u_i+1}{u_i}\right\rfloor =u_i+1$. Finally,  $\mathcal{B}_{i+1}$ is excessive so $f\geq u_i$, hence $e_{i+1}\leq u_i^2$ as required. 
\end{proof}

We now show that the greedy algorithm exhibits some self-correcting behaviour, so that the introduction of excessive sets does not introduce any neighbourhoods of size greater than two.

\begin{lemma}\label{a>4}
Suppose $n\geq 4r^2 \ge 16$. Suppose that $\mathcal{B}_i$ is not excessive, but $\mathcal{B}_{i+1}$ is excessive. If $u_i>2$ then $\mathcal{B}_{i+j}$ is not excessive for some $j \in \{2, 3\}$. 
Moreover, if every element of $F_{i+1}$ is fixed by $G_{i+1}$ and has a neighbourhood of size exactly two, then the same is true for $F_{i+j}$ and $G_{i+j}$. 
\end{lemma}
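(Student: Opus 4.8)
The plan is to track how the greedy algorithm behaves in the few steps immediately after an excessive set is introduced, showing that the ``excess'' created by $\alpha_{i+1}$ gets absorbed within two or three further steps. By Lemma~\ref{ez} we are in one of two cases: either $u_{i+1}=u_i$ and there are at least $r$ orbits of size $u_i$ available with $e_{i+1}\leq u_i^2+u_i$, or $u_{i+1}=u_i-1$ with $e_{i+1}\leq u_i^2$. In either case the key quantitative input is that $e_{i+1}$ is small relative to the number of ``good'' orbits of size $u_{i+1}$ or $u_{i+1}-1$ (which is at least $2r$ by Lemma~\ref{p>3r}(iii), since $u_{i+1}\geq 2$ as $u_i>2$). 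I would first establish, using Lemma~\ref{orbcomp} comparing $E_{i+1}$ against an orbit of near-maximal size, that at step $i+2$ the algorithm is forced to pick its points out of the orbits of size $u_{i+1}$ (and, once those run out, size $u_{i+1}-1$) rather than chasing the few remaining points of $E_{i+1}$ — precisely because $e_{i+1}/(\text{stuff})$ is too small to compete with a ratio of roughly $u_{i+1}$. This means $\alpha_{i+2}$ reduces the ``ragged'' orbit structure: each of its points either lies in $E_{i+1}$ (at most a bounded number) or reduces some size-$u_{i+1}$ orbit to size $u_{i+1}-1$.

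Next I would argue that after at most two such corrective steps every set $\alpha_j$ ($j\leq i+1$) has $|\alpha_j\setminus F|$ back down to at most the current $u$-value, i.e. the collection is no longer excessive. The excessive set $\alpha_{i+1}$ has $|\alpha_{i+1}\setminus F_{i+1}| = f \leq u_i+1$, so it is excessive by exactly one, and it ceases to be excessive as soon as $u$ drops by one, or as soon as one of its points is pulled into $F$ (which happens when a later $\alpha_j$ meets $\alpha_{i+1}$). The arithmetic of Lemma~\ref{ez} — specifically the bounds $e_{i+1}\leq u_i^2+u_i$ and the count of at least $r$ size-$u_i$ orbits in case (i), or $e_{i+1}\leq u_i^2$ in case (ii) — is engineered so that within one or two steps either $u$ necessarily decrements (there simply are not enough size-$u$ orbits and enough points in $E$ to keep $u$ constant for long) or $\alpha_{i+1}$ acquires a second neighbour. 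I would do this by a short case split on whether $u_{i+2}=u_{i+1}$ or $u_{i+2}=u_{i+1}-1$, in each case bounding $e_{i+2}$ via~\eqref{star}/Lemma~\ref{orbcomp} and checking that excessiveness is resolved by step $i+2$ or $i+3$.

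For the ``moreover'' clause I would proceed exactly as in the proof of Proposition~\ref{prop:noexcess}'s use of Lemma~\ref{p>3r}(iv): the hypothesis gives that at step $i+1$ every point of $F_{i+1}$ is $G_{i+1}$-fixed with neighbourhood of size two, and I need the same at step $i+j$. The only way a new point can enter $F$ between steps is by being chosen a second time; the only way an existing point of $F$ can gain a third neighbour is if $\alpha_{i+j}$ meets a set in which that point already has two neighbours. I would show the corrective steps $\alpha_{i+2}$ (and $\alpha_{i+3}$ if needed) only ever pick points from $E_{i+1}$ or from the size-$u_{i+1}$ and size-$(u_{i+1}-1)$ orbits inside $\alpha_1\sqcup\cdots\sqcup\alpha_s$, and that a point there has neighbourhood of size at most one before being chosen — so after being chosen its neighbourhood has size exactly two. (Here the constraint $u_i>2$, hence $u_{i+1}\geq 2$, is what guarantees these corrective picks do not themselves create a size-three neighbourhood, since the relevant orbits still have size $\geq 2$ and are genuine orbits, not already-frozen points.) Combined with the fact that $\alpha_{i+1}$ itself only created neighbourhoods of size two (its new points came from $E_i$, old from single-neighbour status), this gives the claim.

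I expect the main obstacle to be the bookkeeping in the corrective-step case analysis: one must simultaneously control $u_{i+2}$, the size of $E_{i+2}$, and the sizes $|\alpha_j\setminus F_{i+2}|$, and verify in every branch that the inequalities from Lemma~\ref{ez} and~\eqref{star} close up to force non-excessiveness within the allotted $j\in\{2,3\}$ — in particular ruling out a cascade where resolving one excessive set immediately spawns another. The numerology ($u_i^2+u_i$ versus $r$ available orbits, and $n\geq 4r^2$) is tight enough that this should work, but getting the ``at most one extra step'' ($j=3$ rather than $j=2$) exactly right, and handling the boundary behaviour when $u_{i+1}$ is close to $2$, is where the care is needed.
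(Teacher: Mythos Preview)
Your overall structure matches the paper's: split via Lemma~\ref{ez} into $u_{i+1}=u_i$ versus $u_{i+1}=u_i-1$, then use Lemma~\ref{orbcomp} to control what {\tt MetaGreedy} does at steps $i+2$ and (if needed) $i+3$. But your description of the competition is inverted, and this hides the key mechanism. You write that at step $i+2$ the algorithm picks from the size-$u_{i+1}$ orbits ``rather than chasing the few remaining points of $E_{i+1}$''; in fact $e_{i+1}$ may be as large as $u_i^2+u_i$, so $E_{i+1}$ is \emph{initially the most attractive orbit}, and the correct statement is a bound $|E_{i+1}\cap\alpha_{i+2}|\le u_i$ obtained from Lemma~\ref{orbcomp} with $\Delta_1=E_{i+1}$ and $\Delta_2$ a size-$u_{i+1}$ orbit disjoint from $\alpha_{i+2}$. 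More seriously, you never explain why $\alpha_{i+2}$ must meet $\alpha_{i+1}$. In your Case~(i) you have $u_{i+2}=u_{i+1}=u_i$ (the paper shows $u$ does \emph{not} drop here, so your proposed sub-split on $u_{i+2}$ is moot), and the only way for $\alpha_{i+1}$ to stop being excessive is for a point of it to enter $F_{i+2}$. The paper's observation is that the excessive orbit $\alpha_{i+1}\setminus F_{i+1}$ has size $f=u_i+1>u_{i+1}$, so together with $E_{i+1}$ it is among the \emph{largest} $G_{i+1}$-orbits and {\tt MetaGreedy} is forced to take exactly one point from it (and no more, since $(u_i+1-1)/2<(u_i-1)/1$ when $u_i>2$). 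Your list of where $\alpha_{i+2}$'s points come from (``$E_{i+1}$ or size-$u_{i+1}$ and size-$(u_{i+1}-1)$ orbits'') omits this orbit entirely.

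In Case~(ii) there is a further subtlety your sketch does not address: the bound one gets is only $|E_{i+1}\cap\alpha_{i+2}|\le u_i=u_{i+2}+1$, so $\alpha_{i+2}$ may itself be excessive in $\mathcal{B}_{i+2}$, and one must show that the step to $i+3$ resolves \emph{both} $\alpha_{i+1}$ and $\alpha_{i+2}$ simultaneously. The paper does this by first establishing $u_{i+3}=u_{i+2}=u_{i+1}$ (there are more than $r$ orbits of size $u_{i+1}$ left untouched), then bounding $e_{i+2}\le u_i^2-u_i$ via Lemma~\ref{orbcomp}, and finally bounding $|E_{i+2}\cap\alpha_{i+3}|\le u_{i+3}$. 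Your plan to ``case split on whether $u_{i+2}=u_{i+1}$ or $u_{i+2}=u_{i+1}-1$'' would not encounter the second branch, and you would still need the argument that $\alpha_{i+3}$ visits each of the (at most two) excessive orbits. Once these points are in place, the ``moreover'' clause follows exactly as you say, since every point chosen at steps $i+2$ and $i+3$ lies in an orbit of size at least $u_{i+1}\ge2$ (hence is not already in $F$) and is chosen at most once.
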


\begin{proof}
By Lemma~\ref{ez}, $\alpha_{i+1}$ is the unique excessive set in $\mathcal{B}_{i+1}$. Let $f= |\alpha_{i+1}\cap E_i|$ as in Lemma~\ref{ez}. 

Suppose first that $u_{i+1}=u_i$, so $f=u_i+1$. From $u_i>2$ we get {$(u_i+1-1)/2<(u_i-1)/1$}. Therefore, by ~\eqref{star}, $|\alpha_{i+2}\cap \alpha_j |\leq 1$ for all $j \leq i+1$. 
Additionally, Lemma~\ref{ez} tells us that $\mathcal{O}_{i+1}$ contains at least $r$ $G_{i+1}$-orbits of size $u_i$, 
so there exists at least one such $G_{i+1}$-orbit $\Delta$ with $|\Delta \cap \alpha_{i+2}| = 0$. In particular, $u_{i+2} = u_i$.
In this case $e_{i+1} \le u_i^2 + u_i$, so 
Lemma~\ref{orbcomp} with $(\Delta_1, \Delta_2) = (E_{i+1}, \Delta)$ shows that
$$\frac{(u_i^2+u_i)-|E_{i+1}\cap \alpha_{i+2}|+1}{|E_{i+1}\cap \alpha_{i+2}|}\geq\frac{e_{i+1}-|E_{i+1}\cap \alpha_{i+2}|+1}{|E_{i+1}\cap \alpha_{i+2}|}\geq u_i.$$ Rearranging gives $u_i^2+u_i+1\geq (u_i+1)|E_{i+1}\cap \alpha_{i+2}|$, and so $|E_{i+1}\cap \alpha_{i+2}|\leq \lfloor u_i+(u_i+1)^{-1}\rfloor = u_i$, and therefore $\alpha_{i+2}$ is not excessive. Additionally, $\alpha_{i+1} \setminus F_{i+1}$ and $E_i$ are the largest $G_{i+1}$-orbits, so {\tt MetaGreedy$(\mathcal{B}_{i+1})$} chooses a point from $\alpha_{i+1}\setminus F_{i+1}$. Hence also $|\alpha_{i+1} \setminus F_{i+2}| \le u_{i+1} + 1 - 1 = u_{i+2}$, and therefore $\mathcal{B}_{i+2}$ is not excessive.

By Lemma~\ref{p>3r}(ii) we may now suppose that $u_{i+1}=u_i-1$.  We shall show that $\mathcal{B}_{i+3}$ is not excessive. It follows from Lemma~\ref{p>3r}(iii) that there are more than $2r$ sets in $\mathcal{O}_{i+1}$ of size $u_i-1 \geq 2$, so $|\alpha_{i+2}\cap \alpha_j |\leq 1$ for all $j \leq i+1$ by Lemma~\ref{p>3r}(ii), so there exist $u_{i+1}$-sets  $\Delta, \Gamma \in\mathcal{O}_{i+1}$ such that $|\Delta\cap\alpha_{i+2}|=0$ and $|\Gamma \cap \alpha_{i+2}| = 1$. The first of these implies that $u_{i+2} = u_{i+1}$.
Additionally, $e_{i+1}\leq u_i^2$ by Lemma~\ref{ez}, so Lemma~\ref{orbcomp} with $(\Delta_1,\Delta_2)=(E_{i+1},\Delta)$ yields 
$$\frac{u_i^2-|E_{i+1}\cap \alpha_{i+2}|+1}{|E_{i+1}\cap \alpha_{i+2}|}\geq\frac{e_{i+1}-|E_{i+1}\cap \alpha_{i+2}|+1}{|E_{i+1}\cap \alpha_{i+2}|}\geq u_i-1.$$ 
Thus, $|E_{i+1}\cap \alpha_{i+2}|\leq \lfloor u_i+u_i^{-1}\rfloor = u_i$.

Next, Lemma~\ref{orbcomp} with $(\Delta_1, \Delta_2) = (\Gamma, E_{i+1})$ bounds $u_i-1 \ge e_{i+2}/(e_{i+1}-e_{i+2}+1)$, and rearranging gives $e_{i+2}\leq(u_i-1)(e_{i+1}+1)/u_i$. Since $e_{i+1}\leq u_i^2$, it now follows that $$e_{i+2}\leq \left\lfloor \frac{(u_i-1)(u_i^2+1)}{u_i} \right\rfloor =
\left \lfloor \frac{u_i^3-u_i^2+u_i-1}{u_i} \right \rfloor =u_i^2-u_i.$$ Now, $|\alpha_{i+1}\setminus F_{i+2}|<|\alpha_{i+1}\setminus F_{i+1}|=u_i+1 = u_{i+2} + 2$, so $\alpha_{i+1}$ and $\alpha_{i+2}$ are the only possible excessive sets in $\mathcal{B}_{i+2}$, and each has at most $u_{i+2} + 1$ points not in $F_{i+2}$. 

There are more than $r$ sets in $\mathcal{O}_{i+2}$ of size $u_{i+2} \geq 2$, so $|\alpha_{i+3}\cap \alpha_{j}|\leq 1$ for all $j \le i+2$ by Lemma~\ref{p>3r}(ii), and moreover there is a $u_{i+2}$-set $\Theta\in\mathcal{O}_{i+2}$ disjoint from $\alpha_{i+3}$. Thus $u_{i+3} = u_{i+2} = u_{i+1} = u_i - 1$. 
Combining $e_{i+2} \leq u_i^2 - u_i=u_{i+3}^2+u_{i+3}$ with Lemma~\ref{orbcomp} applied to $(\Delta_1,\Delta_2)=(E_{i+2},\Theta)$ gives 
$$\frac{(u_{i+3}^2+u_{i+3})-|E_{i+2}\cap \alpha_{i+3}|+1}{|E_{i+2}\cap \alpha_{i+3}|}\geq \frac{e_{i+2}-|E_{i+2}\cap \alpha_{i+3}|+1}{|E_{i+2}\cap \alpha_{i+3}|}\geq u_{i+3},$$ 
hence $|E_{i+2}\cap \alpha_{i+3}|\leq u_{i+3}$. We have now shown that $\alpha_{i+1}$, $\alpha_{i+2}$, and $\alpha_{i+3}$ each have at most $u_{i+3}$ points outside of $F_{i+3}$. Therefore, there are no excessive sets in $\mathcal{B}_{i+3}$.
\end{proof}

\begin{thm}
Suppose $n\geq 4r^2$. Then $\mathcal{G}(\S_{n,r})\leq \frac{2n}{r}+1$.
\end{thm}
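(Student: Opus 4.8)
Fix an arbitrary greedy base $\mathcal{B}$ for $\S_{n,r}$. If no $\mathcal{B}_i$ is excessive, Proposition~\ref{prop:noexcess} already gives $|\mathcal{B}|\le\frac{2n}{r}$, so assume some $\mathcal{B}_i$ is excessive. Then I aim to show that the penultimate collection $\mathcal{B}_{|\mathcal{B}|-1}$ has every neighbourhood of size at most $2$; double counting incident pairs $(a,\alpha)$ with $a\in\alpha$ then gives $(|\mathcal{B}|-1)r=\sum_{a\in[n]}|N_{\mathcal{B}_{|\mathcal{B}|-1}}(a)|\le 2n$, so $|\mathcal{B}|\le\frac{2n}{r}+1$, and since $\mathcal{B}$ was arbitrary this proves $\mathcal{G}(\S_{n,r})\le\frac{2n}{r}+1$.

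Let $(\star_i)$ be the statement that every element of $F_i$ is fixed by $G_i$ and has neighbourhood of size exactly $2$. Since points outside $F_i$ lie in at most one set of $\mathcal{B}_i$, $(\star_i)$ forces all $\mathcal{B}_i$-neighbourhoods to have size at most $2$, so it suffices to prove $(\star_i)$ for every $i\le|\mathcal{B}|-1$. I would argue by induction on $i$, starting from $i=s$, where $F_s=\emptyset$ makes $(\star_s)$ vacuous. For a step $i\to i+1$ with $u_i\ge 3$: if $\mathcal{B}_{i+1}$ is not excessive, $(\star_{i+1})$ is the conclusion of Lemma~\ref{p>3r}(iv); if $\mathcal{B}_{i+1}$ becomes excessive, Lemma~\ref{a>4} shows the resulting excessive ``window'' self-corrects within at most three steps while preserving $(\star)$ and never creating a neighbourhood of size $>2$. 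Two points of bookkeeping make this work: by Lemma~\ref{ez} the unique excessive set at any moment is the one chosen most recently, so these windows are pairwise disjoint and each has a non-excessive immediate predecessor, as the hypotheses of Lemma~\ref{a>4} demand; and throughout a window there remain far more than $r$ points whose neighbourhood has size at most $1$, so the tie-break of {\tt MetaGreedy} (choose a point of smallest neighbourhood whenever~\eqref{star} attains its maximum value $1$) prevents any point of $F_i$ from being chosen, so no neighbourhood of size $2$ ever grows --- this also yields $(\star)$ at the intermediate steps of a window.

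It remains to push the induction through the range $u_i\le 2$, where Lemma~\ref{a>4} no longer applies. For a non-excessive step this is still Lemma~\ref{p>3r}(iv) when $u_i=2$, and a direct check when $u_i=1$; for a step that creates an excessive set I would argue by hand from Lemma~\ref{ez} and~\eqref{star}. Concretely, let $t$ be the largest index with $u_t=2$ (which exists, with $s\le t<|\mathcal{B}|$, by Lemma~\ref{p>3r}(ii)); in the relevant case, forced by the maximality of $t$, Lemma~\ref{ez} --- or, if no set becomes excessive at step $t+1$, the argument of Proposition~\ref{prop:noexcess} --- yields $u_{t+1}=1$, $(\star_{t+1})$, and a bound $e_{t+1}\le u_t^2$. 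From then on $u_i=1$ and $e_i$ is bounded, so the only $G_i$-orbits of size $\ge 2$ on $[n]$ are $E_i$ together with boundedly many classes of equal-neighbourhood points, each of bounded size, left behind by a late excessive set; by the same tie-break of {\tt MetaGreedy}, a point lying in two sets is never chosen while one lying in at most one set is available, so $(\star_i)$ persists until the step that makes $G_i$ trivial. Hence $(\star_{|\mathcal{B}|-1})$ holds, as required.

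I expect the tail $u_i\le 2$ to be the main obstacle: Lemma~\ref{a>4} provides nothing there, so the self-correction argument must be re-run directly while simultaneously bounding the residual ``gap'' $e_i$ and the finitely many equal-neighbourhood classes a late excessive set can introduce. It is this tail that is responsible for the additive $+1$ over the bound $\frac{2n}{r}$ of Proposition~\ref{prop:noexcess}: the penultimate collection $\mathcal{B}_{|\mathcal{B}|-1}$ may still contain one pair of points with a common neighbourhood (necessarily of size at most $2$, so the double count is unaffected), which the final set $\alpha_{|\mathcal{B}|}$ then separates.
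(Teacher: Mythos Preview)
Your overall strategy for $u_i>2$ coincides with the paper's: maintain $(\star_i)$ inductively via Lemma~\ref{p>3r}(iv) on non-excessive steps, and use Lemma~\ref{a>4} to absorb each excessive window while preserving $(\star)$. That part is fine.

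The gap is in the tail $u_i\le 2$, and it is not just incompleteness but a likely wrong target. You aim to show that the penultimate collection $\mathcal{B}_{|\mathcal{B}|-1}$ has every neighbourhood of size at most~$2$, relying on the tie-break rule: ``a point lying in two sets is never chosen while one lying in at most one set is available''. The problem is that you never verify such a point \emph{is} available. Concretely, take the case $u_i=1$, $u_{i+1}=0$ from Lemma~\ref{ez}(ii): here $e_i\le 3$, $f\le 2$, and if $p_i$ denotes the number of points of neighbourhood~$1$ in $\mathcal{B}_i$, then $u_{i+1}=0$ forces $\alpha_{i+1}$ to exhaust all $p_i$ such points. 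When $p_i<r-f$ (and nothing you have proved rules this out), {\tt MetaGreedy} must then select $r-f-p_i$ points from $F_i$, creating neighbourhoods of size~$3$ already in $\mathcal{B}_{i+1}$, which is the penultimate collection since $|G_{i+1}|\le 2$. Your double count $(|\mathcal{B}|-1)r\le 2n$ would then fail. Your final paragraph's explanation of the ``$+1$'' (one unresolved pair sharing a size-$\le 2$ neighbourhood) is unrelated to this issue.

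The paper handles the tail differently: rather than forbidding neighbourhoods of size~$3$ before the last step, it shows by a short case analysis on $(u_i,u_{i+1})$ with $u_i\le 2$ that the \emph{final} base has at most $r$ points in neighbourhoods of size~$3$ and all others of size at most~$2$, whence $|\mathcal{B}|\,r\le 2(n-r)+3r=2n+r$. This is what produces the ``$+1$'', and it works precisely in the subcases where your tie-break argument breaks down.
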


\begin{proof}
If $r = 1$ this is clear, so assume that $r \ge 2$.
Let $\mathcal{B}$ be a greedy base for $\S_{n,r}$. By Proposition~\ref{prop:noexcess}, without loss of generality we may let $i$ be the final index such that $\mathcal{B}_{i+1}$ is excessive whilst $\mathcal{B}_{i}$ is not. If $u_i>2$, then by applying Lemma~\ref{p>3r} and Lemma~\ref{a>4} inductively we deduce that there is some $j\in\{2,3\}$ such that every element of $F_{i+j}$ is fixed by $G_{i+j}$ and has a neighbourhood of size exactly two. We now argue as in the proof of Proposition~\ref{prop:noexcess} to deduce that $|\mathcal{B}|\leq 2n/r$.

On the other hand suppose that $u_i\leq 2$. By Lemma~\ref{ez}, $\alpha_{i+1}$ is the unique excessive set in $\mathcal{B}_{i+1}$. Let $f=|\alpha_{i+1}\cap E_i|$ be the number of newly chosen points. We first show that 
at most $r$ points from $[n]$ have neighbourhoods of size $3$ in $\mathcal{B}$, and all others have neighbourhoods of size at most two. From this it will follow that $$|\mathcal{B}|r=\sum_{a\in [n]}|N_\mathcal{B}(a)|\leq 2(n-r)+3r=2n+r,$$ yielding the desired result.

Suppose first that $u_{i+1}=u_i$. By Lemma~\ref{ez}, $e_{i+1}\leq u_i^2+u_i$, and $f=u_i+1$. If $u_i=1$ then $e_{i+1}\leq f=2$, and so $G_{i+1}\cong \mathrm{Sym}(E_{i}\cap \alpha_{i+1})\times\mathrm{Sym}(E_{i+1})$. By Lemma~\ref{ez}, $\mathcal{O}_{i+1}$ contains at least $r$ sets each consisting of a single point with a $\mathcal{B}_{i+1}$-neighbourhood of size one, thus --- since {\tt MetaGreedy} chooses points with smallest neighbourhood --- $\mathcal{B}_{i+2}$ is a base with no points in neighbourhoods of size greater than two. If instead $u_i=2$, then $f=3$ and $e_{i+1}\leq 6$ --- one can verify for each possible $e_{i+1} \in \{3, 4, 5, 6\}$ that $\mathcal{B}$ puts at most $r$ points in neighbourhoods of size three.

On the other hand suppose $u_{i+1}=u_i-1$. By Lemma~\ref{ez}, $u_i\leq f\leq u_i+1$ and $e_{i+1}\leq u_i^2$. If $u_i=1$, then $f\leq 2$, and $e_{i+1}\leq 1$, so $|G_{i+1}|\leq2$, hence $\mathcal{B}_{i+2}$ is a base with at most $r$ points in $\mathcal{B}_{i+2}$-neighbourhoods of size three. On the other hand, suppose $u_i=2$. Then $2\leq f\leq 3$ and $e_{i+1}\leq 4$. Additionally, $u_{i+1}=1$ so $\mathcal{O}_{i+1}$ contains at least $2r$ fixed points in neighbourhoods of size one --- the greedy algorithm terminates within the next two steps if $r>2$, and three steps if $r=2$, either way, no neighbourhoods of size three are introduced.
\end{proof}

Since any pointwise stabiliser for the action of $\S_{n,r}$ is a direct product of symmetric groups, and any non-trivial symmetric group contains a transposition, we immediately get the following corollary.
\begin{cor}\label{ansub}
Suppose $n\geq 4r^2$. Then $\mathcal{G}(\mathrm{A}_{n,r})\leq \frac{2n}{r}+1$.
\end{cor}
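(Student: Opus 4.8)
The plan is to show that the greedy algorithm run on $\A_{n,r}$ always makes choices that are also legitimate greedy choices for $\S_{n,r}$, so that a greedy base for $\A_{n,r}$ extends, by appending at most one further $r$-set, to a greedy base for $\S_{n,r}$; the bound then follows from the preceding theorem.

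The key observation is that if $\mathcal{B}'\subseteq\Omega$ and $(\A_n)_{(\mathcal{B}')}\neq 1$, then $(\A_n)_{(\mathcal{B}')}$ and $(\S_n)_{(\mathcal{B}')}$ have the same orbits on $[n]$, and hence on $\Omega$. Indeed, writing $(\S_n)_{(\mathcal{B}')}=\prod_j\mathrm{Sym}(\Delta_j)$ for its orbits $\Delta_j$ on $[n]$, the assumption $(\A_n)_{(\mathcal{B}')}=\A_n\cap\prod_j\mathrm{Sym}(\Delta_j)\neq 1$ forces (since a transposition is odd) that either some $\Delta_j$ has size at least $3$, or at least two of the $\Delta_j$ have size at least $2$. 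In either case, for each orbit $\Delta_j$ and each $a,b\in\Delta_j$ there is an even permutation in $\prod_j\mathrm{Sym}(\Delta_j)$ carrying $a$ to $b$: a $3$-cycle inside $\Delta_j$ when $|\Delta_j|\geq 3$, and otherwise $(a\,b)$ times a transposition supported on a second nontrivial orbit. Hence the $(\A_n)_{(\mathcal{B}')}$-orbits on $[n]$ are again exactly the $\Delta_j$.

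Now let $\mathcal{B}=\{\alpha_1,\dots,\alpha_k\}$ be a greedy base for $\A_{n,r}$. For each $i<k$ the set $\mathcal{B}_i$ is not a base for $\A_{n,r}$, so $(\A_n)_{(\mathcal{B}_i)}\neq 1$; thus for every $i\in\{0,1,\dots,k-1\}$ the $(\A_n)_{(\mathcal{B}_i)}$-orbits on $\Omega$ coincide with the $(\S_n)_{(\mathcal{B}_i)}$-orbits, so $\alpha_{i+1}$ lies in a largest $(\S_n)_{(\mathcal{B}_i)}$-orbit. That orbit has size at least $2$, since $(\S_n)_{(\mathcal{B}_i)}\supseteq(\A_n)_{(\mathcal{B}_i)}$ is nontrivial and $\S_n$ acts faithfully on $\Omega$, so $(\S_n)_{(\mathcal{B}_{i+1})}<(\S_n)_{(\mathcal{B}_i)}$. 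Therefore $\alpha_1,\dots,\alpha_k$ is an initial segment of (the construction of) a greedy base for $\S_{n,r}$. Finally, $(\S_n)_{(\mathcal{B}_k)}\cap\A_n=(\A_n)_{(\mathcal{B})}=1$, so $(\S_n)_{(\mathcal{B}_k)}$ has order at most $2$, and being a product of symmetric groups it is either trivial or generated by a single transposition $(a\,b)$. In the first case $\mathcal{B}$ is itself a greedy base for $\S_{n,r}$, so $k\leq\mathcal{G}(\S_{n,r})$. In the second case, since $2r<n$ there is an $r$-set $\gamma$ with $|\gamma\cap\{a,b\}|=1$, which is a point in the unique largest (that is, $2$-element) $\langle(a\,b)\rangle$-orbit on $\Omega$; appending $\gamma$ to $\mathcal{B}$ produces a greedy base for $\S_{n,r}$ of size $k+1$, so $k+1\leq\mathcal{G}(\S_{n,r})$. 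In either case $k\leq\mathcal{G}(\S_{n,r})\leq\frac{2n}{r}+1$, as required.

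The only genuinely delicate points are the orbit-coincidence observation above and the handling of the final step, where $(\S_n)_{(\mathcal{B}_k)}$ may shrink to a single transposition that $\A_n$ cannot detect; the remaining steps are routine bookkeeping.
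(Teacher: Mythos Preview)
Your approach is the same as the paper's two-sentence sketch: greedy for $\A_{n,r}$ tracks greedy for $\S_{n,r}$, possibly stopping one step sooner, so the bound follows from the preceding theorem. Your write-up is more careful than the paper's, but the clause ``and hence on $\Omega$'' hides a real gap. Equality of orbits on $[n]$ for $H=(\S_n)_{(\mathcal B')}=\prod_j\mathrm{Sym}(\Delta_j)$ and $H'=H\cap\A_n$ does \emph{not} force equality of orbits on $\Omega=\binom{[n]}{r}$. Concretely, suppose every $\Delta_j$ has size at most $2$, with $m$ of them of size exactly $2$ and $2\le m\le r$. An $r$-set $\alpha$ meeting each size-$2$ orbit in one point has $H_\alpha=1$, so $|\alpha^H|=2^m$ while $|\alpha^{H'}|=2^{m-1}$; an $r$-set $\beta$ leaving exactly one size-$2$ orbit $\Delta$ unsplit (such $\beta$ exist since $n>2r$) has $H_\beta=\mathrm{Sym}(\Delta)\cong\S_2$, whence $|\beta^H|=2^{m-1}$ and also $|\beta^{H'}|=2^{m-1}$. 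Thus $\beta$ lies in a largest $H'$-orbit but not in a largest $H$-orbit, so greedy for $\A_n$ may legitimately choose $\beta$, and your initial-segment claim fails at that step.

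The repair is short. What you actually need is that each point stabiliser $H_\gamma=\prod_j\bigl(\mathrm{Sym}(\Delta_j\cap\gamma)\times\mathrm{Sym}(\Delta_j\setminus\gamma)\bigr)$ contains a transposition, for then $|\gamma^{H'}|=|\gamma^H|$ for every $\gamma\in\Omega$ and the orbits on $\Omega$ genuinely coincide. This holds whenever some $\Delta_j$ has size at least $3$ (since then $|\Delta_j\cap\gamma|\ge2$ or $|\Delta_j\setminus\gamma|\ge2$), and also when all $|\Delta_j|\le 2$ but the number $m$ of size-$2$ orbits exceeds $r$ (since then $\gamma$ cannot split them all). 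In the sole remaining configuration, all $|\Delta_j|\le2$ with $2\le m\le r$, the largest $H'$-orbit is regular, so the next greedy choice for $\A_n$ has trivial $H'$-stabiliser and the $\A_n$-run terminates immediately; meanwhile the largest $H$-orbit is also regular, so the $\S_n$-run (which by the previous cases has made identical choices up to this point) likewise terminates in one further step. Hence $\mathcal G(\A_{n,r})\le\mathcal G(\S_{n,r})$ in every case, and the bound follows.
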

\begin{proof}
For $\S_{n,r}$ there is a transposition in each stabiliser until we reach a base; the orbits for $\mathrm{A}_{n,r}$ therefore have the same size as the corresponding orbits for $\mathrm{S}_{n,r}$ and so the greedy algorithm will make the same choices for $\mathrm{A}_{n,r}$ as it does for $\mathrm{S}_{n,r}$,  but may stop one step earlier.
\end{proof}

With that we deduce that the subset actions with $n$ sufficiently large form a ravenous family.

\begin{thm}\label{mainsnr}
 Let $G\in\{\mathrm{A}_{n,r},\mathrm{S}_{n,r}\}$ with $n\geq 4r^2$. Then $\mathcal{G}(G)\leq \frac{17}{10}b(G).$
\end{thm}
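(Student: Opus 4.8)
The plan is to prove the theorem by sandwiching $\mathcal{G}(G)$ and $b(G)$. On one side we already have $\mathcal{G}(G)\le \frac{2n}{r}+1$ for $G\in\{\S_{n,r},\A_{n,r}\}$ with $n\ge 4r^2$ (the bound $\mathcal{G}(\S_{n,r})\le \frac{2n}{r}+1$ proved above, together with Corollary~\ref{ansub}), so it is enough to show $\frac{2n}{r}+1\le \frac{17}{10}b(G)$, and for that I need a good lower bound on $b(G)$. I would get this from the neighbourhood criterion. If $\mathcal{B}$ is a base for $\S_{n,r}$ with $|\mathcal{B}|=k$, then the subsets $N_{\mathcal{B}}(u)\subseteq\mathcal{B}$, $u\in[n]$, are pairwise distinct, at most one of them is empty, and at most $k$ of them are singletons; double-counting the incidences $\sum_{u\in[n]}|N_{\mathcal{B}}(u)|=rk$ then forces $rk\ge k+2(n-1-k)$, i.e. $b(\S_{n,r})\ge \lceil 2(n-1)/(r+1)\rceil$. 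For $\A_{n,r}$ a base $\mathcal{B}$ has every $N_{\mathcal{B}}$-fibre of size $1$ except possibly one fibre of size $2$, so the $N_{\mathcal{B}}(u)$ still take at least $n-1$ distinct values and at most two of them are empty; the analogous count gives $b(\A_{n,r})\ge \lceil 2(n-2)/(r+1)\rceil$. (Alternatively one could simply quote the exact formulae of \cite{dvrd} and \cite{MeSp}.)

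With these bounds in hand the theorem reduces to an elementary inequality. For $\S_{n,r}$ it suffices to verify $\frac{2n}{r}+1\le \frac{17}{10}\cdot\frac{2(n-1)}{r+1}$, which rearranges to $n(7r-10)\ge r(5r+22)$; for $r\ge 3$ this is implied with room to spare by $n\ge 4r^2$, while for $r=2$ it reads exactly $n\ge 16=4r^2$. Thus the pair $(r,n)=(2,16)$ is what pins down the constant $17/10$, and no further work is needed for the symmetric groups. For $\A_{n,r}$ one needs $\frac{2n}{r}+1\le \frac{17}{10}\cdot\frac{2(n-2)}{r+1}$, i.e. $n(7r-10)\ge r(5r+39)$; this again follows from $n\ge 4r^2$ whenever $r\ge 3$, and holds for $r=2$ as soon as $n\ge 25$.

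The remaining obstacle, and the only non-routine part, is the short list of alternating cases with $r=2$ and $16\le n\le 24$. Taking ceilings above already settles several of them, but for the values $n\in\{17,18,20,23\}$ the bound $\mathcal{G}(\A_{n,2})\le n+1$ and the lower bound on $b(\A_{n,2})$ are each off by one in the unhelpful direction, so I would close these by a direct check — either by computing $\mathcal{G}(\A_{n,2})$ explicitly, or by extracting from the $r=2$ branch of the proof of the preceding theorem the slightly sharper estimate $\mathcal{G}(\S_{n,2})\le n$, which suffices since $\mathcal{G}(\A_{n,2})\le\mathcal{G}(\S_{n,2})$. Everything apart from this finite verification is mechanical: the lower bound on $b$ is a one-line double count, and the concluding inequality is far from tight except at $(r,n)=(2,16)$.
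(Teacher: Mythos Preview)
Your approach is essentially the paper's: use $\mathcal{G}(G)\le 2n/r+1$ from the preceding results and compare against a lower bound for $b(G)$, identifying $(r,n)=(2,16)$ as the extremal case. The paper simply cites the exact base-size formula from \cite{dvrd} and asserts (without detailed justification) that the ratio $(2n/r+1)/b(G)$ is maximised at $G=\A_{16,2}$, where $b=10$; you instead derive the lower bounds $b(\S_{n,r})\ge\lceil 2(n-1)/(r+1)\rceil$ and $b(\A_{n,r})\ge\lceil 2(n-2)/(r+1)\rceil$ by a self-contained double count. Your bound is tight at $(2,16)$ (giving $b(\A_{16,2})\ge 10$), so the two routes coincide at the crucial point.

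Two small gaps to close. First, you never treat $r=1$: your key inequality $n(7r-10)\ge r(5r+22)$ fails there since $7r-10<0$, so you need a separate one-line argument (for $r=1$ greedy is optimal and $\mathcal{G}(G)=b(G)$). Second, for $\A_{n,2}$ with $n\in\{17,18,20,23\}$ you propose either direct computation or the sharpened bound $\mathcal{G}(\S_{n,2})\le n$, but do not carry out either; direct computation certainly works, while the sharper bound would require you to trace through the $r=2$ case analysis in the preceding theorem and verify that no neighbourhoods of size three arise (this is plausible but not immediate, since the general argument allows up to $r$ such points). Citing the exact formulae from \cite{dvrd}, as you mention parenthetically, would dispose of these cases cleanly and is what the paper in effect does.
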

\begin{proof}
If $r = 1$ then this is clear. For $r \ge 2$,
the ratio $\left(\frac{2n}{r}+1\right)/b(G)$ is maximized by setting $G=\mathrm{A}_{n,r}$ with $r=2$ and  $n=4\cdot 2^2=16$. In this case~\cite[Theorem 1.1]{dvrd} gives $b(\mathrm{A}_{n,r})=10$, while Corollary~\ref{ansub} gives $\mathcal{G}(G)\leq 17$, hence the result.
\end{proof}

\begin{rk}
Experimentally, we find that the greedy algorithm actually does much better than this. 
For example, for the extremal case $\A_{16,2}$, every greedy base is minimum.
\end{rk}

\section{$\S_n$ and $\A_n$ on partitions}\label{sec:parts}
We now analyse the greedy algorithm applied to the actions of $\S_{kl}$ and $\mathrm{A}_{kl}$ on the set $\Pi = \Pi_{k, l}$ of all \emph{$(k,l)$-partitions:} partitions of $[kl]$ into exactly $k$ parts of size $l$. Thus the stabiliser of a point is $\S_l \wr \S_k$. We denote the corresponding permutation groups by $\S_{k\times l}$ and $\mathrm{A}_{k\times l}$.

Throughout this section we use a recent paper by Peter Cameron and the authors~\cite{cdvrd} in which the 2-point stabilisers of the greedy algorithm are studied. In most cases, understanding the structure of the 2-point stabilisers will be sufficient for determining the greedy base size (see Lemma~\ref{key}). We start by considering the case $l=2$ --- here every greedy base is minimum.

\begin{prop}\label{l=2}
Let $G\in\{\A_{k\times 2},\S_{k\times 2} : k\geq 3\}$. Then $$\mathcal{G}(G)=\begin{cases}4&\text{ if $G=\S_{3\times 2}$,}\\
3&\text{ otherwise.}\end{cases}$$
\end{prop}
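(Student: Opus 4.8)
The goal is to compute $\mathcal{G}(G)$ exactly for $G \in \{\A_{k\times 2}, \S_{k\times 2} : k \geq 3\}$, and to show that every greedy base is in fact a minimum base. First I would recall from \cite{cdvrd} (or re-derive directly) the base size $b(G)$ for these groups: one expects $b(\S_{3\times 2}) = 4$ and $b(G) = 3$ otherwise, so the proposition amounts to showing $\mathcal{G}(G) = b(G)$ in every case. Since $b(G) \leq \mathcal{G}(G)$ always, it suffices to prove the upper bound $\mathcal{G}(G) \leq b(G)$, i.e.\ that the greedy algorithm terminates within $b(G)$ steps no matter which maximal orbits it selects.

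\textbf{Key steps.} The stabiliser of a point in $\Pi_{k,2}$ is $\S_2 \wr \S_k$, of order $2^k k!$, acting on $[2k]$. I would track the greedy algorithm step by step. \emph{Step 1:} all of $\Pi_{k,2}$ is a single orbit, so the first partition $\pi_1$ is arbitrary; after choosing it the stabiliser is $\S_2 \wr \S_k$. \emph{Step 2:} I would analyse the orbits of $\S_2 \wr \S_k$ on $\Pi_{k,2}$ and identify the structure of a maximal-orbit partition $\pi_2$ and the resulting two-point stabiliser $(S_2 \wr S_k)_{\pi_2}$ — this is exactly the content of \cite{cdvrd}. The partition $\pi_2$ maximizing the orbit will be one that is ``as transverse as possible'' to $\pi_1$; the key combinatorial fact is that the two-point stabiliser is small — essentially a product of a bounded number of $\S_2$'s together with possibly one $\S_2 \wr \S_2$ or similar small factor, and in particular is a $2$-group (indeed often trivial or of order $2$) once $k$ is large enough. \emph{Step 3:} I would then check that after choosing $\pi_3$ in a largest orbit of this two-point stabiliser, the stabiliser is trivial, so $\mathcal{G}(G) \leq 3$ for $\S_{k\times 2}$ with $k \geq 4$ and for all $\A_{k\times 2}$; combined with the lower bound $b(G) \geq 3$ this gives equality. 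The exceptional case $\S_{3\times 2}$ (order $48$, degree $6$, only $15$ partitions) I would handle by direct inspection: here the two-point stabiliser has order $2$ and is not ``absorbed'' by any third partition, forcing a fourth step, so $\mathcal{G}(\S_{3\times 2}) = 4$; and since $b(\S_{3\times 2}) = 4$ as well (a base of size $3$ would need $48 \leq 15^3$, which holds, so this needs the finer argument from \cite{cdvrd} or a short direct check that no $3$ partitions have trivial joint stabiliser), every greedy base is minimum here too.

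\textbf{Main obstacle.} The delicate point is the orbit analysis at Step 2: I need to show that \emph{every} largest $(\S_2\wr\S_k)$-orbit on partitions gives a two-point stabiliser whose largest orbit on $\Pi_{k,2}$ in turn contains a partition whose stabiliser is trivial — i.e.\ that no unlucky greedy choice can stall the process. For $k \geq 4$ this should follow from a clean description of two-point stabilisers (each is, up to conjugacy, determined by a small amount of combinatorial data about how the two partitions interlock), together with a parity/counting argument showing a trivial three-point stabiliser is reachable. For small $k$ — particularly $k = 3$, and possibly $k = 4$ for $\S_{k\times 2}$ — one cannot rely on asymptotics and must verify the claim by an explicit (small) computation or case analysis. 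The $\A$ versus $\S$ distinction is mild: as in Corollary~\ref{ansub}, since every nontrivial stabiliser here contains a transposition, $\A_{k\times 2}$ makes the same greedy choices as $\S_{k\times 2}$ but may terminate one step sooner, which is exactly why $\A_{3\times 2}$ drops to $\mathcal{G} = 3$ while $\S_{3\times 2}$ has $\mathcal{G} = 4$.
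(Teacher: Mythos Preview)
Your overall strategy is right and matches the paper's: identify the smallest two-point stabiliser via \cite{cdvrd}, then exhibit a third partition that trivialises it (existence suffices, since greedy always achieves the minimum stabiliser size at each step). However, two of your structural claims are wrong and would derail the execution.

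First, the minimal two-point stabiliser for $\S_{k\times2}$ is \emph{not} a $2$-group that shrinks as $k$ grows: by \cite[Proposition~3.1]{cdvrd} it is the dihedral group $\mathrm{D}_{2k}$ of order $2k$, realised by taking $\P_1,\P_2$ whose union (viewed as a graph on $[2k]$ with edge set $\P_1\cup\P_2$) is a single $2k$-cycle. So Step~3 must kill a group of order growing linearly in $k$, not a bounded $2$-group; the paper does this with an explicit $\P_3$ and a short element-chasing argument showing $G_{\P_1,\P_2,\P_3}=1$.

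Second, your $\A$-versus-$\S$ reduction fails. The generators of $\mathrm{D}_{2k}$ on $[2k]$ are a product of two $k$-cycles (the rotation by~$2$) and a product of $k$ transpositions (a reflection); when $k$ is even both are even permutations, so $\mathrm{D}_{2k}\le\A_{2k}$ and the $\A$-stabiliser of the same pair is still all of $\mathrm{D}_{2k}$, not half of it. Consequently, for even $k\ge 6$ greedy on $\A_{k\times2}$ makes a \emph{different} second choice from $\S_{k\times2}$: it prefers a pair whose $\S$-stabiliser is the next-smallest option $2\times \mathrm{D}_{2k-2}$ (order $4k-4$), because that one \emph{does} contain odd permutations and its intersection with $\A_{2k}$ is $\mathrm{D}_{2k-2}$ of order $2k-2<2k$. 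The paper treats this case separately, with analogous explicit partitions $\Q_1,\Q_2,\Q_3$. For $k$ odd the reflection in $\mathrm{D}_{2k}$ is odd, so there your heuristic does apply. The small cases $k\in\{3,4\}$ (and the value $\mathcal{G}(\S_{3\times2})=4$) are handled by direct computation in the paper rather than by any finer structural argument.
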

\begin{proof}
We first consider $G = \S_{k \times 2}$. We check computationally that $\mathcal{G}(\S_{3\times 2})=4$, so suppose that $k\geq 4$. By \cite[Proposition 3.1]{cdvrd} the smallest 2-point stabiliser is the dihedral group $\mathrm{D}_{2k}$ of order $2k$. By the proof of ~\cite[Proposition 3.1]{cdvrd}, without loss of generality we may let $$\P_1=\{\{1,2\},\{3,4\},\dots, \{2k-1,2k\}\} \mbox{ and } \P_2=\{\{2k,1\},\{2,3\},\dots,\{2k-2,2k-1\}\}$$ so that $G_{\P_1,\P_2}=\mathrm{D}_{2k}.$ Let $$\P_3=\{\{1,2\},\{3,5\},\{4,2k\}\}\cup\{\{2i,2i+1\} : 3\leq i\leq k-1\},$$ which exists since $k\geq 4$, and let $\tau\in G_{\P_1,\P_2,\P_3}$. Then $\{1,2\}$ is the only set common to $\P_1$ and $\P_3$ so $\{1,2\}^\tau=\{1,2\}$. If $(1,2)^\tau=(2,1)$ then since $\P_2^\tau=\P_2$, we deduce that $(3,2k)^\tau=(2k,3)$. Next, $(4,2k-1)^{\tau}=(2k-1,4)$ since $\P_1^\tau=\P_1$, but then $\{4, 2k\}^\tau=\{2k-1, 3\}$ which is not in $\P_3$, a contradiction. Thus $(1,2)^\tau=(1,2)$. This forces $3^\tau=3$ since $\P_2^\tau=\P_2$, which implies that $4^\tau=4$ since $\P_1^\tau=\P_1$. Continuing this way, we see that $\tau=1$, so $\mathcal{G}(G) = 3$.

Now let $G = \A_{k \times 2}$. We check computationally that $\mathcal{G}(G)=3$ for $k\in\{3, 4\}$, so suppose $k\geq 5$. If $k$ is odd, then by \cite[Proposition 3.1]{cdvrd} the smallest 2-point stabiliser of $\S_{k\times 2}$ is $\mathrm{D}_{2k}$, which contains an odd permutation. Thus, the greedy algorithm makes identical choices for $G$ as for $\S_{k\times 2}$ and the result follows from the result for $\S_{k \times 2}$.

Thus without loss of generality $k$ is even. The unique (up to isomorphism) 2-point stabiliser of $\mathrm{S}_{k\times 2}$ of size $2k$, namely $\mathrm{D}_{2k}$, is a subgroup of $\mathrm{A}_{2k}$. On the other hand, by \cite[Proposition 3.1]{cdvrd} the (unique up to isomorphism) second smallest 2-point stabiliser in $\S_{k \times 2}$ is $2\times \mathrm{D}_{2k-2}$ of order $4k-4$, and hence since $k\geq 5$ the greedy algorithm chooses the first two partitions $\Q_1$ and $\Q_2$ so that $(\A_{k\times 2})_{\Q_1,\Q_2}=\mathrm{D}_{2k-2}$. By the proof of ~\cite[Proposition 3.1]{cdvrd}, without loss of generality 
\[\Q_1=\{\{1,2\},\{3,4\},\dots,\{2k-1,2k\}\} \mbox{ and } \Q_2 =\{\{2i,2i+1\}: 1\leq i\leq k-2 \}\cup\{\{2k-2,1\},\{2k-1,2k\}\}.\] Define $$\Q_3=\{\{1,2\},\{3,5\},\{4,2k-2\},\{2k-1,2k\}\}\cup\{\{2i,2i+1\} : 3\leq i\leq k-2\}.$$  Then $G_{\Q_1,\Q_2,\Q_3}$ is trivial by an identical argument as for $\P_1,\P_2,\P_3$.
\end{proof}

We split the rest of this section into two subsections. In the first we develop some useful theory, in the second we prove Theorem~\ref{camskl}.

\subsection{Intersection arrays and their properties}\label{machinery}
In this subsection we recall some key definitions from \cite{cdvrd}, and generalise the concept of intersection matrices.

For the following to be well-defined we will usually think of each partition in $\Pi$ as being ordered by the smallest element in each part. Let $t\in\{1,2,3\}$, and let $a_1,\dots,a_t\in\mathbb{N}$. Denote by $\mathcal{M}_{a_1,\dots,a_t}$ the set of all $a_1\times\cdots\times a_t$ arrays with entries in $\mathbb{Z}_{\geq 0}$. The \emph{intersection array} $M(\P_1,\dots,P_t)$ of $\mathcal{P}_1,\dots, \P_t \in \Pi$  is the array $M=(m_{i_{1}\cdots i_t})\in\mathcal{M}_{k,\dots,k}$ with $m_{i_1\cdots i_t}=\left|\bigcap_{j=1}^tP_{ji_j}\right|$. We also say that $M$ is a $(k,l)$-\emph{intersection array} or a $(k,l)$-intersection $t$-array.

The next result is a 3-dimensional analogue of \cite[Lemma 2.1]{cdvrd}.

\begin{lemma}\label{matchar} Let $W=(w_{ijs})\in\mathcal{M}_{k,k,k}$ and let  $N= M(\P,\Q)=(n_{ij})$ for some $\P,\Q \in \Pi$. Then there is a $\mathcal{T} \in \Pi$ such that $W=M(\P,\Q,\mathcal{T})$ if and only if $\sum_{s=1}^k w_{ijs}=n_{ij}$ for all $i,j$ and $\sum_{1\leq i,j\leq k} w_{ijs}=l$ for all $s$.
\end{lemma}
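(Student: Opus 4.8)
\textbf{Proof plan for Lemma~\ref{matchar}.}

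The plan is to prove both directions directly from the definition of the intersection array, using that $\mathcal{T} \in \Pi$ is precisely a partition of $[kl]$ into $k$ parts each of size $l$. For the forward direction, suppose $W = M(\P, \Q, \mathcal{T})$ for some $\mathcal{T} \in \Pi$. Then $w_{ijs} = |P_i \cap Q_j \cap T_s|$. Summing over $s$ and using that the $T_s$ partition $[kl]$ gives $\sum_{s=1}^k w_{ijs} = |P_i \cap Q_j \cap [kl]| = |P_i \cap Q_j| = n_{ij}$, since $N = M(\P, \Q)$. Summing instead over $i$ and $j$ and using that the $P_i \cap Q_j$ (over all $i,j$) partition $[kl]$ gives $\sum_{1 \le i,j \le k} w_{ijs} = |T_s| = l$, as each part of $\mathcal{T}$ has size $l$. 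This establishes the two families of equations.

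For the converse, suppose $W = (w_{ijs})$ has nonnegative integer entries with $\sum_{s=1}^k w_{ijs} = n_{ij}$ for all $i,j$ and $\sum_{1 \le i,j \le k} w_{ijs} = l$ for all $s$. I would construct $\mathcal{T}$ explicitly. For each pair $(i,j)$, the block $P_i \cap Q_j$ has size $n_{ij} = \sum_{s=1}^k w_{ijs}$, so I can partition $P_i \cap Q_j$ into $k$ (possibly empty) labelled pieces $B_{ijs}$ with $|B_{ijs}| = w_{ijs}$; this choice is arbitrary but possible precisely because the $w_{ijs}$ sum to $n_{ij}$. Now define $T_s := \bigcup_{1 \le i,j \le k} B_{ijs}$ for each $s \in \{1, \ldots, k\}$. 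Since the blocks $P_i \cap Q_j$ partition $[kl]$ and each is partitioned by the $B_{ijs}$ over $s$, the sets $T_1, \ldots, T_k$ partition $[kl]$; and $|T_s| = \sum_{1 \le i,j \le k} w_{ijs} = l$ by hypothesis, so each part has size $l$. Hence $\mathcal{T} := \{T_1, \ldots, T_k\} \in \Pi$. Finally, $P_i \cap Q_j \cap T_s = B_{ijs}$ by construction (the $B_{ijs}$ for varying $(i,j)$ are disjoint and $B_{ijs} \subseteq P_i \cap Q_j$), so $|P_i \cap Q_j \cap T_s| = w_{ijs}$ and thus $M(\P, \Q, \mathcal{T}) = W$.

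There is essentially no serious obstacle here; the only mild subtlety is bookkeeping about which sets are being partitioned by what, and making sure the labelling of the pieces $B_{ijs}$ is consistent so that the final intersection identity $P_i \cap Q_j \cap T_s = B_{ijs}$ holds on the nose. One should note that $\mathcal{T}$ is typically far from unique — the construction involves free choices whenever some $w_{ijs}$ with the same $(i,j)$ are both positive — which is exactly the analogue of the non-uniqueness already present in \cite[Lemma 2.1]{cdvrd}, so no uniqueness claim is being made. If desired, the argument can be phrased more slickly by invoking \cite[Lemma 2.1]{cdvrd} itself: view the $k^2$ numbers $n_{ij}$ as a single composition of $kl$ indexed by pairs, but the direct construction above is cleaner and self-contained, so that is the route I would take.
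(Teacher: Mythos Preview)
Your proof is correct and follows essentially the same approach as the paper: both directions are argued directly from the definitions, and for the converse both you and the paper partition each block $P_i\cap Q_j$ into pieces of the prescribed sizes $w_{ijs}$ and assemble $T_s$ as the union over $(i,j)$. The only cosmetic difference is that the paper builds the pieces $X_{ijs}$ iteratively in $s$ (taking them from $(P_i\cap Q_j)\setminus\bigcup_{r<s}T_r$) whereas you partition each $P_i\cap Q_j$ in one step; these are equivalent.
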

\begin{proof}
First suppose that $\mathcal{T} \in \Pi$ satisfies $W=M(\mathcal{P},\mathcal{Q},\mathcal{T})$. For all $i,j$ $$\sum_{s=1}^k w_{ijs}=|P_{i}\cap Q_j|,$$ hence $N=M(\P,\Q)$ satisfies $n_{ij}=\sum_{s=1}^k w_{ijs}$. Additionally, for each $s$ $$\sum_{i,j=1}^k w_{ijs}=\sum_{i,j=1}^k|P_{i}\cap Q_{j}\cap T_s|=\sum_{j=1}^k|Q_j\cap T_s|=|T_s|=l.$$

On the other hand, suppose instead $\sum_{s=1}^k w_{ijs}=n_{ij}$ for all $i,j$ and $\sum_{1\leq i,j\leq k} w_{ijs}=l$ for all $s$. For $1\leq s\leq k$, let $T_{s}=\bigsqcup_{1\leq i,j\leq k} X_{ijs}$ where $X_{ijs}$ is any set of size $w_{ijs}$ such that $$X_{ijs}\subseteq (P_{i}\cap Q_{j})\setminus\left(\bigcup_{r<s} T_{r}\right).$$ This is possible since $\sum_{s=1}^k w_{ijs}=n_{ij}=|P_{i}\cap Q_{j}|$. Then $$|T_{s}|=\sum_{1\leq i,j\leq k} |X_{ijs}|=\sum_{1\leq i,j\leq k}w_{ijs}=l,$$ and so $\mathcal{T}:=\{T_{1},T_{2},\dots,T_{k}\}$ is a $(k,l)$-partition. Moreover, since $\P$ and $\Q$ are partitions, $P_i\cap Q_j$ is disjoint from $P_{i'}\cap Q_{j'}$ for any $(i',j')\ne (i,j)$, so in particular $|P_{i}\cap Q_{j}\cap T_{s}|=|P_{i}\cap Q_{j}\cap X_{ijs}|=|X_{ijs}|=w_{ijs}$, as desired. 
\end{proof}

We shall say that the $(k,l)$-intersection array $W=(w_{ijs})$ \emph{arises} from $N=(n_{ij})$ if $\sum_{s=1}^k w_{ijs}=n_{ij}$ for all $i,j\leq k$.

We now consider how to determine up to $\S_{kl}$-conjugacy the stabiliser of a triple of $(k,l)$-partitions from their intersection array, generalising~\cite[Lemma 2.2]{cdvrd}. The group $\S_{a}\times\S_{b} (\times\S_{c})$ acts coordinatewise on $\mathcal{M}_{a,b(,c)}$. We write $K$ to denote this group when $a$ and $b$ (and $c$) are clear. Let $G=\S_{k\times l}$, and $H\leq G_\mathcal{P}$ for some $\P \in \Pi$.  We say that $\mathcal{P}$ is \emph{partwise fixed} by $H$ if the induced action of $H$ on the parts of $\mathcal{P}$ is trivial. An array $W$ is \emph{partwise fixed} if $K_W=1$.

The proof of the next lemma is analogous to the 2-dimensional case in~\cite{cdvrd}.

\begin{lemma}
\label{top}
Let  $W=M(\P,\Q,\mathcal{T})$ for some distinct $\P, \Q, \mathcal{T} \in \Pi$. Then $$H:=G_{\P,\Q,\mathcal{T}}\cong\left(\prod_{i_1,i_2, i_3\in [k]}\mathrm{Sym}\left( P_{i_1} \cap Q_{i_2} \cap T_{i_3} \right)\right): K_{W}.$$ In particular, $|H|=|K_{W}|\prod_{i_1,i_2,i_3}(n_{i_1i_2 i_3}!)$, and $H\cong G_{\P_1,\Q_1, \mathcal{T}_1}$ 
 whenever $M(\P_1,\Q_1,\mathcal{T}_1)=W$. Moreover, $\P$, $\Q$ and $\mathcal{T}$ are partwise fixed by $H$ if and only if $K_N=1$.
\end{lemma}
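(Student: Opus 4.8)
The plan is to mimic the structure of the proof of the two-dimensional analogue \cite[Lemma 2.2]{cdvrd}, adapting it to the three-fold intersection. First I would set $B := \prod_{i_1,i_2,i_3\in[k]}\mathrm{Sym}(P_{i_1}\cap Q_{i_2}\cap T_{i_3})$. Every element of $B$ fixes each part of $\mathcal P$, $\mathcal Q$ and $\mathcal T$ setwise (since those parts are disjoint unions of the cells $P_{i_1}\cap Q_{i_2}\cap T_{i_3}$), so $B\leq H$; moreover $B$ is normal in $H$, because $H$ permutes the cells among themselves (an element of $H$ sends $P_{i_1}\cap Q_{i_2}\cap T_{i_3}$ to $P_{i_1^\sigma}\cap Q_{i_2^\tau}\cap T_{i_3^\rho}$ for suitable permutations $\sigma,\tau,\rho$ of $[k]$), hence conjugation by $h\in H$ permutes the direct factors of $B$. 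The action of $H$ on the cells yields a homomorphism $\varphi\colon H\to\mathrm{Sym}([k])^3$ whose image preserves the array $W$ (the cell sizes), i.e.\ lands in $K_W$; one checks $\ker\varphi=B$ since a permutation fixing every cell setwise lies in the product of the symmetric groups on the cells. This gives the short exact sequence $1\to B\to H\to \mathrm{im}\,\varphi\to 1$ with $\mathrm{im}\,\varphi\leq K_W$.

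Next I would produce a splitting. Given $(\sigma,\tau,\rho)\in K_W$, so that $n_{i_1i_2i_3}=n_{i_1^\sigma i_2^\tau i_3^\rho}$ for all indices, I choose for each triple $(i_1,i_2,i_3)$ an arbitrary bijection $P_{i_1}\cap Q_{i_2}\cap T_{i_3}\to P_{i_1^\sigma}\cap Q_{i_2^\tau}\cap T_{i_3^\rho}$ (possible precisely because the two cells have equal size), and glue these bijections into a single permutation of $[kl]$; it visibly lies in $H$ and maps to $(\sigma,\tau,\rho)$ under $\varphi$, so $\mathrm{im}\,\varphi=K_W$ and the sequence splits, giving $H\cong B:K_W$ and the order formula $|H|=|K_W|\prod_{i_1,i_2,i_3}(n_{i_1i_2i_3}!)$. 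The clause ``$H\cong G_{\mathcal P_1,\mathcal Q_1,\mathcal T_1}$ whenever $M(\mathcal P_1,\mathcal Q_1,\mathcal T_1)=W$'' is then immediate, since the right-hand side of the isomorphism depends only on the array $W$.

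For the final ``moreover'' clause: $\mathcal P$, $\mathcal Q$, $\mathcal T$ are partwise fixed by $H$ exactly when the composite of $\varphi$ with each of the three projections $K_W\to\mathrm{Sym}([k])$ is trivial on all of $H$, i.e.\ when $\mathrm{im}\,\varphi=K_W$ is trivial, which is the statement $K_W=1$ (note $K_N=K_W$ is the intended reading once one recalls $N$ is the relevant array; I would phrase this as ``$K_W=1$'' to match the hypothesis of the surrounding application). I expect the main obstacle to be purely expository rather than mathematical: making the gluing-of-bijections argument precise enough to be convincing while keeping it short, and being careful that normality of $B$ in $H$ and the identification $\ker\varphi=B$ are both genuinely used and correctly stated. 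Everything else is a routine transcription of the two-dimensional argument with an extra index.
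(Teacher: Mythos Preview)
Your approach is correct and is precisely what the paper intends: it gives no proof, merely the remark that the argument is ``analogous to the 2-dimensional case in~\cite{cdvrd}'', and your outline is exactly that adaptation.

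One small technical point worth tightening: producing, for each $(\sigma,\tau,\rho)\in K_W$, an \emph{arbitrary} family of bijections between corresponding cells shows that $\varphi$ is surjective onto $K_W$, but it does not by itself show that the short exact sequence \emph{splits}; an arbitrary set-theoretic section need not be a homomorphism. To obtain a genuine complement to $B$ in $H$, choose the bijections coherently---for instance, fix once and for all an enumeration of each cell $P_{i_1}\cap Q_{i_2}\cap T_{i_3}$ and let $(\sigma,\tau,\rho)$ send the $j$th point of $P_{i_1}\cap Q_{i_2}\cap T_{i_3}$ to the $j$th point of $P_{i_1^\sigma}\cap Q_{i_2^\tau}\cap T_{i_3^\rho}$. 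This map is now visibly a homomorphism $K_W\to H$ splitting $\varphi$. (You flag ``making the gluing-of-bijections argument precise'' as the likely obstacle; this is exactly where the care is needed.) With that fix, the claim that $G_{\mathcal P_1,\mathcal Q_1,\mathcal T_1}\cong H$ whenever the arrays agree can also be argued directly: the same enumeration trick produces a permutation of $[kl]$ conjugating one stabiliser to the other, which is cleaner than arguing that the abstract semidirect product is determined by $W$. Your reading of the final clause (that $K_N$ should be $K_W$) is correct.
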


Given $v = (v_1, \ldots, v_k) \in \Z^k$ define the \emph{cyclic-shift matrix} $\theta(v)$ of $v$ to be the $k \times k$ matrix with $(i,j)$-entry $v_{j-i+1}$, interpreting subscripts modulo k. If $\sum v_i=l$ then $\theta(v)$ is a $(k,l)$-intersection array with $|K_{\theta(v)}|\geq k$.

Let $s$ and $t$ be positive integers. Consider a sequence of \emph{multiplicities} $m_1,m_2,\dots,m_d$ with $\sum_{i = 1}^d m_i<t$, and a sequence of \emph{ranks} of the form $$r_1>r_2>\cdots> r_d>\lceil s/t\rceil\quad\text{ or }\quad r_1<r_2<\cdots< r_d<\lfloor s/t \rfloor,$$ such that $\sum_{i = 1}^d m_ir_i<s$.  Let $X=[(m_1,r_1),(m_2,r_2),\dots, (m_d,r_d)]$ 
 --- we call $X$ a \emph{multiplicity sequence}. We use double curly braces and exponential notation for multisets, and write $\omega_A(x)$ to denote the multiplicity of $x$ in the multiset $A$. If $A_0 = \{\{r_1^{m_1}, r_2^{m_2}, \ldots, r_d^{m_d}\}\}$, so that $\omega_A(r_i) = m_i$ for all $i$, then $A_0$ can be completed to a multiset $A$ of size $t$ with sum $s$ by adding only elements equal to either the floor or ceiling of the average required value, which is $$x=x(s,t,X):=\frac{s-\left(\sum m_ir_i\right)}{t-\sum m_i}.$$ 
Let $b=b(s,t,X)\leq t-\sum m_{i}$ be a positive integer satisfying $$s-\sum m_ir_i=b\lceil x\rceil +\left(t-\left(\sum m_i\right)-b\right)\lfloor x\rfloor,$$ and let $S=S(s,t,X)$ be the collection of non-negative integer multisets $A$ of size $t$ with sum $s$ such that for each $i \in \{1, \ldots, d\}$, if $r_i> \lceil s/t\rceil$ then $A$ has at least $\sum_{j=1}^i m_j$ entries greater than or equal to $r_i$, and otherwise $A$ has at least   $\sum_{j=1}^i m_j$ entries less than or equal to $r_i$.  

For an array $W$, let $W^*$ be the multiset of entries of $W$ and let $\omega_W(x)=\omega_{W^*}(x)$ for each $x$.

\begin{lemma}[{\cite[Lemma 2.4]{cdvrd}}]\label{minN}
With the above notation, $\min\left\{\prod_{a\in A} a! : A\in S\right\}$ is achieved uniquely by the integer multiset $B\in S$ with $\omega_B(r_i)=m_i$, and all other elements of $B$ as close to each other as possible. Thus, this minimum is precisely $$\left(\prod_{i=1}^{d}r_i!^{m_i}\right)\lceil x\rceil!^b\lfloor x\rfloor!^{t-\left(\sum m_i\right)-b}.$$ 
\end{lemma}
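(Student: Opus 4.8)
The plan is to establish the pointwise inequality $\prod_{a\in A}a!\ \geq\ \prod_{b\in B}b!$ for every $A\in S$, with equality only for $A=B$; the displayed closed form is then obtained by reading off the entries of $B$. The only analytic input is the \emph{smoothing bound}: for integers $a>b\geq 0$ we have $(a-1)!\,(b+1)!\leq a!\,b!$, strictly when $a\geq b+2$. I would treat the two cases of the hypothesis in parallel, writing out the decreasing case $r_1>\dots>r_d>\lceil s/t\rceil$ and indicating the symmetric changes for the increasing case at the end.

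As a preliminary, I would check that $B\in S$ and that $B$ has \emph{exactly} $m_i$ entries equal to $r_i$. Because each $r_i>\lceil s/t\rceil$ one gets $x=x(s,t,X)<s/t$, hence $\lceil x\rceil\leq\lceil s/t\rceil<r_d\leq r_i$ for all $i$; thus every one of the $t-\sum m_i$ ``balanced'' entries of $B$ is strictly below every rank, so the number of entries of $B$ that are $\geq r_i$ equals $\sum_{j\leq i}m_j$ exactly, $B\in S$, and $\prod_{b\in B}b!$ equals the asserted value.

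The core is to analyse an arbitrary minimiser $A\in S$ (the set $S$ is finite and nonempty). I would show, using only smoothing: (i) $\max(A)=r_1$ --- it cannot be smaller since some entry must be $\geq r_1$, and if $v:=\max(A)>r_1$ then $v$ is not a rank, so lowering one entry $v\mapsto v-1$ breaks no constraint of $S$, while $v\geq r_1+1>s/t$ forces the existence of an entry $\leq v-2$ to absorb the unit, and smoothing then beats $A$; (ii) $A$ has exactly $m_1$ entries equal to $r_1$ --- it has at least $m_1$ by the $i=1$ constraint, and if it had more, lowering one of them $r_1\mapsto r_1-1$ still satisfies the $i=1$ constraint and is harmless for $i\geq2$ since $r_1-1\geq r_2\geq\dots$, while (as in (i)) some entry $\leq r_1-2$ exists to absorb the unit, so again smoothing beats $A$. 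Deleting the $m_1$ copies of $r_1$ from both $A$ and $B$ yields multisets of size $t-m_1$ and sum $s-m_1r_1$ lying in $S\bigl(s-m_1r_1,\,t-m_1,\,[(m_2,r_2),\dots,(m_d,r_d)]\bigr)$; one checks the lemma's hypotheses persist and, crucially, that $x$ is unchanged. Induction on $d$ then finishes the lower bound and, via the inductive uniqueness together with (i) and (ii), the uniqueness of the minimiser; the base case $d=0$ is the classical fact that among non-negative integer multisets of fixed size and sum the product of factorials is uniquely minimised by the balanced multiset, which is immediate from the smoothing bound.

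For the increasing case $r_1<\dots<r_d<\lfloor s/t\rfloor$ the argument dualises: the constraints now read ``at least $\sum_{j\leq i}m_j$ entries $\leq r_i$'', one proves $\min(A)=r_1$ with exactly $m_1$ copies, smoothing pushes the smallest entry up rather than the largest down, and all the counting estimates invert because $x>s/t$ and every $r_i<\lfloor s/t\rfloor$. The main obstacle is bookkeeping rather than ideas: at each smoothing step one must confirm, by a short estimate comparing $r_1$ with $s/t$, that a partner entry for the move exists within $S$, and one must verify that the peeled instance still meets every hypothesis of the lemma --- in particular the rank inequalities relative to the \emph{new} $\lceil s'/t'\rceil$ --- so keeping the floors and ceilings consistent is the only delicate point.
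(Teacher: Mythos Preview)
The paper does not prove this lemma at all: it is quoted verbatim from \cite[Lemma~2.4]{cdvrd}, so there is no in-paper proof to compare your attempt against. That said, your smoothing-and-peeling argument is correct and is essentially the natural proof one would expect the cited source to give. The only places worth tightening are the two ``partner exists'' estimates: in step~(ii), to guarantee an entry $\le r_1-2$ you should use not just $r_1>\lceil s/t\rceil$ but also that at least $m_1+1\ge 2$ entries equal $r_1$, so that if every entry were $\ge r_1-1$ the sum would be at least $t(r_1-1)+(m_1+1)>s$; and in the inductive step you should record explicitly (as you note at the end) that $s'/t'<s/t$ forces $\lceil s'/t'\rceil\le\lceil s/t\rceil<r_d$, so the rank inequalities survive the peel. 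With those two lines made explicit the argument is complete.
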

 
For $\P_1,\P_2,\dots,\P_m \in \Pi$ we define $\mathcal{S}_{\P_1,\P_2,\dots,\P_m}$ to be 
the multiset of all sizes of $m$-wise intersections across parts of the $\P_i$. When clear from context we shall write $G$ for $\S_{k\times l}$ and $\mathcal{S}_{m}=\mathcal{S}_{\P_1,\P_2,\dots,\P_m}$.
Since $G$ is transitive on $\Pi$ we will often think of the greedy algorithm as first choosing an intersection 2-array with stabiliser of minimal size.
For intersection arrays $N=M(\mathcal{P},\Q)$ and $W = M(\P,\Q,\mathcal{T})$, we write $G(N)$ and $G(W)$ for $G_{\mathcal{P},\Q}$ and  $G_{\mathcal{P},\Q,\mathcal{T}}$.

We now have all of the preliminary theory to state and prove a key lemma.

\begin{lemma}\label{key}
Let $G=\S_{k\times l}$ and let $\P_1,\P_2,\dots,\P_m \in \Pi$. Suppose each $\P_i$ is partwise fixed by $G_{\P_1,\P_2,\dots,\P_m}$ and that $\omega_{\mathcal{S}_m}(t_m)\geq k$ for some $t_m\not\equiv 0\pmod{k}$. Then $$\mathcal{G}((A_{k\times l})_{\P_1,\P_2,\dots,\P_m})\leq\mathcal{G}(G_{\P_1,\P_2,\dots,\P_m})=\lceil \log_k(\max(\mathcal{S}_m))\rceil.$$
\end{lemma}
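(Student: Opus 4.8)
### Proof proposal for Lemma \ref{key}

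The plan is to establish the two (in)equalities separately, and then observe the alternating‑group case follows as before. For the equality $\mathcal{G}(G_{\P_1,\dots,\P_m}) = \lceil \log_k(\max(\mathcal{S}_m))\rceil$, I would first describe the group $H := G_{\P_1,\dots,\P_m}$ explicitly via Lemma \ref{top}: since each $\P_i$ is partwise fixed by $H$, we have $K_W = 1$ for $W = M(\P_1,\dots,\P_m)$, so $H \cong \prod \mathrm{Sym}(P_{1i_1}\cap\cdots\cap P_{mi_m})$, a direct product of symmetric groups whose degrees are exactly the elements of the multiset $\mathcal{S}_m$ (ignoring the parts of size $0$ and $1$, which contribute nothing). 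Thus $H$ is a direct product of symmetric groups on a fixed collection of blocks, and the greedy algorithm for such a group reduces to the following: at each step pick a point in a largest current block, which splits a $\mathrm{Sym}(j)$‑factor of maximal degree $j$ into $\mathrm{Sym}(j-1)$; the algorithm terminates once every block has size $\le 1$. So $\mathcal{G}(H)$ equals the number of greedy steps needed to reduce every block of size $j$ down to size $1$, which for a single block of size $j$ is clearly $j - 1$ — wait, that would give $\max(\mathcal{S}_m) - 1$, not a logarithm.

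Here is where the hypothesis $\omega_{\mathcal{S}_m}(t_m) \ge k$ with $t_m \not\equiv 0 \pmod k$ must enter, and it signals that the greedy algorithm is \emph{not} applied directly to $H$ as an abstract permutation group on $[kl]$, but rather continues the greedy process on $\Pi$: having already chosen $\P_1,\dots,\P_m$, the next greedy choice is a partition $\P_{m+1}$, and by Lemma \ref{matchar} this corresponds to choosing an intersection array $W$ arising from $N = M(\P_1,\dots,\P_m)$ subject to the column‑sum constraint $\sum_{i,j} w_{ijs} = l$. The key point is that choosing $\P_{m+1}$ well can split \emph{every} block simultaneously and roughly in half: a single partition can intersect each block of size $j$ in roughly $j/k$ points from each of its $k$ parts. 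The condition $t_m \not\equiv 0 \pmod k$ guarantees that when we split the largest blocks (of size $t_m$, of which there are at least $k$) as evenly as possible across a new partition, the stabiliser genuinely shrinks and, crucially, the resulting new largest block size is $\lceil t_m/k\rceil$ rather than getting stuck. So I would argue: (a) the greedy algorithm on $\Pi$, starting from $\P_1,\dots,\P_m$, can at each step divide the maximum block size by $k$ (rounding up), hence after $\lceil \log_k(\max(\mathcal{S}_m))\rceil$ further steps all blocks have size $\le 1$ and the stabiliser is trivial (upper bound on $\mathcal{G}$); and (b) no greedy step can do better than dividing the maximum block size by $k$, since each part of the new partition meets a block of size $j$ in at least $\lceil j/k\rceil$ points whenever that block must be covered — so the max block size after one step is at least $\lceil \max(\mathcal{S}_m)/k\rceil$, giving the matching lower bound. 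The hypothesis $\omega_{\mathcal{S}_m}(t_m)\ge k$ is what forces the greedy algorithm to actually take a full $\lceil\log_k\rceil$ steps (there are enough maximal blocks that one partition cannot clear them in fewer steps than its "halving" rate allows), and $t_m\not\equiv 0\pmod k$ prevents a degenerate faster collapse; I would need to verify that these two conditions propagate (or are no longer needed) after the first step, i.e. that the "divide by $k$" analysis is self‑sustaining.

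For the inequality $\mathcal{G}((A_{k\times l})_{\P_1,\dots,\P_m}) \le \mathcal{G}(G_{\P_1,\dots,\P_m})$, I would argue exactly as in Corollary \ref{ansub}: the stabiliser $H$ is a direct product of symmetric groups, and as long as some factor is nontrivial it contains a transposition, so $(A_{k\times l})_{\P_1,\dots,\P_m}$ has index $2$ in $H$ and their orbits on $\Pi$ have equal sizes until a base is reached; hence the greedy algorithm makes identical choices, except that it may terminate one step earlier because the alternating stabiliser reaches the identity one step before the symmetric one does. This gives the asserted $\le$ (in fact $\mathcal{G}((A_{k\times l})_{\P_1,\dots,\P_m}) \in \{\mathcal{G}(H) - 1, \mathcal{G}(H)\}$).

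I expect the main obstacle to be making precise the "divide the max block size by $k$ at each greedy step, and no faster" claim — in particular proving the lower bound, which requires showing that the column‑sum constraint $\sum_{i,j} w_{ijs} = l$ in Lemma \ref{matchar} forces, for the block of maximal size $t_m$ (with $\ge k$ copies, none divisible by $k$), that some part of $\P_{m+1}$ meets some maximal block in at least $\lceil t_m/k\rceil$ points, and that this stays true at every subsequent step. The bookkeeping with floors and ceilings, and verifying that the hypotheses of the lemma are preserved (or become irrelevant) under one greedy step, is the delicate part; everything else is either a direct appeal to Lemma \ref{top} and Lemma \ref{matchar} or a repeat of the $A_n$‑vs‑$S_n$ argument from Corollary \ref{ansub}.
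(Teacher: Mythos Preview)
Your overall inductive scheme is right and matches the paper's, including the alternating-group argument. But you have misidentified the role of the hypothesis $\omega_{\mathcal{S}_m}(t_m)\geq k$ with $k\nmid t_m$, and this is where the gap lies. First, $t_m$ need not equal $\max(\mathcal{S}_m)$; it is merely \emph{some} intersection size occurring at least $k$ times, so your phrase ``the largest blocks (of size $t_m$)'' conflates two distinct quantities. Second, the hypothesis is not a lower-bound device: the lower bound is the easy pigeonhole observation that any partition leaves a piece of size at least $\lceil\max(\mathcal{S}_m)/k\rceil$ inside the largest block, so the maximum can never drop faster than the recurrence $a\mapsto\lceil a/k\rceil$.

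The hard direction is the upper bound, and the real obstacle is not even splitting but \emph{partwise fixedness}. You need a $\P_{m+1}$ that both splits every block $A$ into pieces of size $\lfloor|A|/k\rfloor$ or $\lceil|A|/k\rceil$ (so that $\prod|A\cap P|!$ is minimal) \emph{and} is partwise fixed by the new stabiliser (so that $|H_{\P_{m+1}}|$ actually equals this product and the inductive hypothesis survives). Even splitting alone is always achievable, but if every evenly-splitting partition had nontrivial $K_W$ then greedy would be forced to accept a strictly larger stabiliser, and neither the partwise-fixedness nor the recurrence on $\max(\mathcal{S}_{m+s})$ would follow for its choice. The hypothesis supplies precisely this missing ingredient: the paper takes $k$ blocks $A_1,\dots,A_k$ of the common size $t_m$ and partitions $\bigcup A_i$ via the cyclic-shift matrix $\theta(v)$, where $v$ has $a$ entries $\lceil t_m/k\rceil$ and $k-a$ entries $\lfloor t_m/k\rfloor$; since $k\nmid t_m$ we have $0<a<k$, so the columns of $\theta(v)$ are pairwise distinct, which forces the new parts to have distinct intersection profiles with the $A_i$ and hence to be partwise fixed. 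The remaining blocks are then split evenly by an arbitrary completion. Propagation of condition~(ii) falls out for free: splitting $k$ blocks of size $t_m$ evenly yields at least $k$ new intersections of each of the sizes $\lceil t_m/k\rceil$ and $\lfloor t_m/k\rfloor$, and one of these two consecutive integers is not divisible by $k$. Without the cyclic-shift construction your proof sketch cannot close the inductive step.
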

By saying that a $(k,l)$-intersection array $N$ satisfies the conditions of this lemma, we shall mean that any set of partitions for which $N$ is the intersection array satisfies these conditions. 
\begin{proof}
We show the equality for $G$, and then deduce the result for $\A_{k \times l}$. We will show inductively that for all $s\geq 0$ with $m+s\leq \mathcal{G}(G)$ the greedy algorithm chooses a partition $\mathcal{P}_{m+s} \in \Pi$ such that each of the following conditions hold:
\begin{enumerate}[label=(\roman*), ref=\roman*]
\item\label{trivtop} $\mathcal{P}_{m+s}$ is partwise fixed by $G_{\mathcal{P}_1,\dots,\mathcal{P}_{m+s}}$;
\item\label{sizes} $\omega_{\mathcal{S}_{m+s}}(t_{m+s})\geq k$ for some $t_{m+s}\not\equiv 0\pmod{k}$; and
\item\label{szcn} $\max(\mathcal{S}_{m+s})=\lceil\max(\mathcal{S}_{m+s-1})/k\rceil$ if $s\geq 1$.
\end{enumerate}
By assumption, $\P_m$ satisfies the above conditions for $s = 0$, so the base case holds.
Suppose all conditions hold for some $s$, and let $\mathcal{A}=\{\{A_1,A_2,\dots, A_{k^{m+s}}\}\}$ be the multiset of all $(m+s)$-wise intersections of $\mathcal{P}_1,\dots,\mathcal{P}_{m+s}$, so that $\mathcal{S}_{m+s}=\{\{|A_i|:1\leq i\leq k^{m+s}\}\}$. 

Let $H=G_{\mathcal{P}_1,\dots,\mathcal{P}_{m+s}}$. 
Then for all $\P \in \Pi$,$$|H_{\P}|\geq\prod_{(A,P)\in\mathcal{A}\times\P} |A\cap P|!$$ 
with equality exactly when $\P$ is partwise fixed by $H_{\P}$. Such a product is minimised when $|A\cap P|\in\{\lceil |A|/k\rceil, \lfloor|A|/k\rfloor\}$ for all $(A,P)\in\mathcal{A}\times\P$, so if there exists a $\P\in\Pi$ that is partwise fixed by $H_{\P}$ and satisfies $|A\cap P|\in\{\lceil |A|/k\rceil, \lfloor|A|/k\rfloor\}$ for all $(A,P)\in\mathcal{A}\times\P$ then such a $\mathcal{P}$ will be chosen as $\P_{m+s+1}$.

By Condition (\ref{sizes}), there is some $t:=t_{m+s}$ such that at least $k$ elements  of $\mathcal{A}$, say $A_1,A_2,\dots, A_k$, have size $t$. 
Since $k \nmid t$ there exists a unique positive integer $a$ such that $t=a\lceil t/k\rceil +(k-a)\lfloor t/k \rfloor$. Consider the cyclic-shift matrix $N=\theta(v)$ where $v$ is the $k$-tuple with first $a$ entries $\lceil t/k\rceil $ and remaining entries $\lfloor t/k\rfloor$. The rows of $N$ are distinct, as are the columns, and moreover, each row and column sums to $t$. By \cite[Lemma 2.1]{cdvrd} there is a partition $\mathcal{B}=\{B_1,\dots, B_k\}$ of $\bigcup_{i=1}^k A_i$ into parts of size $t$ such that $N=M(\{A_1,\dots, A_k\},\mathcal{B})$. Let $\mathcal{C}=\{C_1,\dots,C_k\}$ be any partition of $[kl]\setminus \bigcup_{i=1}^k A_i$ into $k$ parts of size $l-t$ such that $|C_i\cap A_j|\in\{\lfloor |A_j|/k\rfloor,\lceil |A_j|/k\rceil\}$ for all $i\leq k<j$. Finally, 
let $\mathcal{P}_{m+s+1} =\{B_i\sqcup C_i : 1\leq i\leq k\}\in \Pi$.

By Condition (\ref{trivtop}), $A_i^H=A_i$ for all $i$. Let $\sigma\in H_{\mathcal{P}_{m+s+1}}$ map $P_j^\sigma = P_r$ for some $P_j,P_r\in\mathcal{P}_{m+s+1}$. Then $(A_i\cap P_j)^\sigma=A_i\cap P_r$ for all $i$, hence 
$$(n_{ij})_{1\leq i\leq k} = (|A_i\cap B_{j}|)_{1\leq i\leq k}=(|A_i\cap P_j|)_{1\leq i\leq k}=(|A_i\cap P_r|)_{1\leq i\leq k}=(|A_i\cap B_{r}|)_{1\leq i\leq k} = (n_{ir})_{1\leq i\leq k}.$$ 
But the columns of $N$ are distinct, hence $j=r$. Therefore, $\mathcal{P}_{m+s+1}$ satisfies Condition (\ref{trivtop}). Moreover, at least one of $\lceil t/k\rceil, \lfloor t/k\rfloor$ is not divisible by $k$, and both appear at least $k$ times in $N$. Since the entries of $N$ describe a submultiset of $\mathcal{S}_{m+s+1}$ Condition (\ref{sizes}) is satisfied. Finally, Condition (\ref{szcn}) holds as $|A\cap P|\in\{\lceil |A|/k\rceil, \lfloor|A|/k\rfloor\}$ for all $A$ in $\mathcal{A}$ and $P\in\P_{m+s+1}$. Therefore, the inductive claim holds.

The greedy algorithm terminates at the first $r$ such that all $(m+r)$-wise intersections have size at most 1, and these intersections are decreasing (at worst) by the recurrence $a_{i+1}=\lceil a_i/k\rceil$. A short calculation shows that $\mathcal{G}(G_{\P_1,\P_2,\dots,\P_m})=\lceil \log_k(\max(\mathcal{S}_m))\rceil$, as desired.

The above induction follows identically for $\A_{k\times l}$, since each $\S_{k\times l}$-stabiliser contains a transposition until the process terminates. However, for $\A_{k\times l}$ the greedy algorithm halts at the first  $r$ such that there is at most one $(m+r)$-wise intersection of size 2, and all others of size at most 1.
\end{proof}

The next lemma simplifies some logarithms from Lemma~\ref{key}.

\begin{lemma}\label{logfacts}
Suppose that $k\geq 7$ and $l=qk+\varepsilon$ for $\varepsilon\in\{0,\pm1,\pm2\}$.  Then $$\lceil\log_k(q+1)\rceil=\lceil \log_k(l+3)\rceil -1.$$
\end{lemma}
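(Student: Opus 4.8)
The plan is to reduce the claimed identity to a comparison of the two integers $q+1$ and $l+3$ against consecutive powers of $k$. Recall $l = qk + \varepsilon$ with $\varepsilon \in \{0,\pm 1,\pm 2\}$ and $k \geq 7$. For any positive integer $N$ we have $\lceil \log_k N\rceil = j$ if and only if $k^{j-1} < N \leq k^j$ (with the convention that $N=1$ gives value $0$). So $\lceil\log_k(q+1)\rceil = \lceil\log_k(l+3)\rceil - 1$ is equivalent to the statement: whenever $k^{j-1} < q+1 \leq k^j$ we also have $k^{j} < l+3 \leq k^{j+1}$, i.e. multiplying the bracketing powers by exactly one factor of $k$ sends the interval containing $q+1$ to the interval containing $l+3$.

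First I would handle the degenerate case $q = 0$ separately: then $l = \varepsilon \in \{0,\pm1,\pm2\}$, but $l$ is a positive part-size so $l \in \{1,2\}$, giving $l+3 \in \{4,5\}$; since $k \geq 7$ we get $\lceil\log_k(q+1)\rceil = \lceil\log_k 1\rceil = 0$ and $\lceil\log_k(l+3)\rceil = 1$, so the identity holds. For $q \geq 1$, let $j = \lceil\log_k(q+1)\rceil \geq 1$, so $k^{j-1} < q+1 \leq k^j$, equivalently $k^{j-1} \leq q \leq k^j - 1$. I then need the two-sided bound $k^{j} < l+3 \leq k^{j+1}$. For the lower bound: $l + 3 = qk + \varepsilon + 3 \geq qk + 1 \geq k^{j-1}\cdot k + 1 = k^j + 1 > k^j$, using $\varepsilon \geq -2$. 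For the upper bound: $l + 3 = qk + \varepsilon + 3 \leq qk + 5 \leq (k^j - 1)k + 5 = k^{j+1} - k + 5 \leq k^{j+1}$, where the last step uses $k \geq 5$ (and indeed $k \geq 7$). This establishes $\lceil\log_k(l+3)\rceil = j+1$, which is exactly the claim.

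The argument is essentially a routine interval chase, so there is no serious obstacle; the only points requiring a little care are the boundary case $q=0$ (where $\lceil\log_k 1\rceil = 0$ rather than $1$, so one must check the identity still balances) and making sure the inequalities $\varepsilon \geq -2$ and $k \geq 7 > 5$ are invoked at the right places so that neither the lower nor the upper comparison becomes an equality in the wrong direction. One should also note that $q \ge 1$ together with $\varepsilon \ge -2$ forces $l \ge k - 2 \ge 5 > 0$, consistent with $l$ being a genuine part size, so no further edge cases arise. I would write this up as a two-paragraph proof: one paragraph recalling the characterisation of $\lceil\log_k N\rceil$ in terms of bracketing powers and disposing of $q=0$, and one paragraph carrying out the two displayed chains of inequalities for $q \geq 1$.
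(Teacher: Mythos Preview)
Your argument is correct and, in fact, cleaner than the paper's. The paper proceeds by first recording the identity $\lceil\log_k\lceil l/k\rceil\rceil=\lceil\log_k l\rceil-1$, then splits into the cases $\varepsilon>0$ and $\varepsilon\le 0$, relating $q+1$ to $\lceil l/k\rceil$ or $\lceil l/k\rceil+1$ respectively, and in the latter case further distinguishes whether $\lceil l/k\rceil$ is a power of $k$ before reconciling $\lceil\log_k l\rceil$ with $\lceil\log_k(l+3)\rceil$. Your approach bypasses all of this by working directly with the bracketing characterisation $k^{j-1}<N\le k^j$ and performing a single two-sided inequality chase, uniformly in $\varepsilon$; the only case split you need is the trivial $q=0$ versus $q\ge 1$. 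The paper's route makes more visible \emph{why} the ``$+3$'' is exactly what is needed (it absorbs the shift when $l-\varepsilon$ is a power of $k$), but your version is shorter and avoids any appeal to auxiliary ceiling identities.
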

\begin{proof}
First note that 
\begin{equation*}
\lceil\log_k \lceil l/k\rceil\rceil=\lceil\log_k l\rceil -1.\end{equation*} 
If $\varepsilon>0$ then $\lceil\log_k (q+1)\rceil= \lceil\log_k \lceil l/k\rceil\rceil=\lceil\log_k l\rceil -1$. Since $k\geq 7$, this is equal to $\lceil \log_k(l+3)\rceil -1$. 
On the other hand, if $\varepsilon\leq 0$, then $\lceil\log_k (q+1)\rceil=\lceil\log_k (\lceil l/k\rceil+1)\rceil$, where $\lceil\log_k (\lceil l/k\rceil+1)\rceil=\lceil\log_k \lceil l/k\rceil\rceil+1$ if and only if $\lceil l/k\rceil$ is a power of $k$. 
Hence from $\lceil\log_k \lceil l/k\rceil\rceil=\lceil\log_k l\rceil -1$ we deduce that 
$$\lceil\log_k (q+1)\rceil=
\begin{cases} 
\lceil\log_k l\rceil&\text{if $l-\varepsilon$ is a power of $k$}\\ 
\lceil\log_k l\rceil-1&\text{otherwise.}
\end{cases}
 \ =\lceil\log_k (l+3)\rceil-1.$$\end{proof}

We finish with a lemma giving sufficient conditions for the existence of a  greedy base of size three.

\begin{lemma}\label{trivstab}
Suppose $k\geq 7$  and that $N^\ast =M(\P,\Q)^\ast \subseteq \{0,1,2\}$, and let $I =\{(i,j) : n_{ij}=2\}$. Suppose the following hold: 
\begin{itemize}
\item[{\rm(i)}]$K_N=\langle \sigma\rangle$, for some involution $\sigma$ with $t$ transpositions;
\item[\rm{(ii)}] there is some $(i,j)\in\supp(\sigma) \subseteq [k] \times [k]$ with $n_{ij}=1$;
\item[\rm{(iii)}] $|I|\leq 2l-3$; and
\item[\rm{(iv)}] $|I|+t\leq \frac{kl}{2}-1$.
\end{itemize} Then there exists a $(k,l)$-partition $\mathcal{T} \in \Pi$ such that $G_{\P,\Q,\mathcal{T}}=1$.
\end{lemma}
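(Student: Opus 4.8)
The plan is to realise the desired $\mathcal{T}$ through its intersection array. By Lemma~\ref{matchar}, a partition $\mathcal{T} \in \Pi$ with $M(\P,\Q,\mathcal{T}) = W$ exists for any $W = (w_{ijs}) \in \mathcal{M}_{k,k,k}$ that arises from $N$ and has every layer-sum $\sum_{i,j} w_{ijs}$ equal to $l$; and by Lemma~\ref{top}, $G_{\P,\Q,\mathcal{T}} \cong \left(\prod_{i_1,i_2,i_3} \mathrm{Sym}(P_{i_1}\cap Q_{i_2}\cap T_{i_3})\right) : K_W$, which is trivial exactly when every entry of $W$ is at most $1$ and $K_W = 1$. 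So it suffices to build a $0/1$-valued $W$ arising from $N$ with all layer-sums $l$ and $K_W = 1$ (distinctness of $\P,\Q,\mathcal{T}$, needed for Lemma~\ref{top}, is easily arranged for such $W$). Since $N^\ast \subseteq \{0,1,2\}$, the $0/1$ condition just says that the two points of each cell $(i,j) \in I$ are split into two distinct layers while the $c_1 := kl - 2|I|$ cells of value $1$ are left intact; note that (iii) and $k \ge 7$ give $c_1 \ge k$.

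First I would pin down $K_W$. Writing $K = \S_k \times \S_k \times \S_k$ for the coordinatewise action, any $(\tau_1,\tau_2,\tau_3) \in K_W$ restricts on the first two coordinates to an element of $K_N = \langle\sigma\rangle$ (sum $W$ over the third index), so $(\tau_1,\tau_2) \in \{1,\sigma\}$. If moreover the $k$ layer-slices $W_{\cdot\cdot 1},\dots,W_{\cdot\cdot k}$ are pairwise distinct, then $(1,1,\tau_3) \in K_W$ forces $\tau_3 = 1$, and for each of the two possible values of $(\tau_1,\tau_2)$ there is at most one candidate $\tau_3$; so after arranging pairwise distinct slices it remains only to rule out $(\sigma_1,\sigma_2,\tau_3) \in K_W$ — equivalently, that $(\sigma_1,\sigma_2)$ acts on the set of slices by merely permuting them.

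To break this last symmetry I would exploit hypothesis (ii). Fix a cell $(i_0,j_0) \in \supp(\sigma)$ with $n_{i_0 j_0} = 1$ and set $(i_1,j_1) := (\sigma_1(i_0),\sigma_2(j_0))$; this is a distinct cell, also of value $1$ since $\sigma \in K_N$, and $\sigma$ (an involution) swaps the two cells. Put the unique point of each of $(i_0,j_0)$ and $(i_1,j_1)$ into one common layer $T_{s^\ast}$. Then $T_{s^\ast}$ is the only layer whose slice has a $1$ in position $(i_0,j_0)$, and $W_{\cdot\cdot s^\ast}^{(\sigma_1,\sigma_2)}$ is the only $(\sigma_1,\sigma_2)$-image of a slice with a $1$ there; so if $(\sigma_1,\sigma_2,\tau_3) \in K_W$ and the slices are distinct, matching these two distinguished matrices forces $W_{\cdot\cdot s^\ast}^{(\sigma_1,\sigma_2)} = W_{\cdot\cdot s^\ast}$. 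It therefore suffices to fill $T_{s^\ast}$ with $l-2$ further points so that $W_{\cdot\cdot s^\ast}$ is \emph{not} $(\sigma_1,\sigma_2)$-invariant — say by including a point of some further cell of $\supp(\sigma)$ without the matching point of its $\sigma$-image; since $\sigma \ne 1$ forces $|\supp(\sigma)| \ge 2k$ there is ample room for this, a handful of degenerate small-$l$ configurations (where room is tight) being dealt with directly, and $l = 2$ being covered by Proposition~\ref{l=2}.

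The real work — and the step I expect to be the main obstacle — is to verify that a single $W$ can be chosen meeting all of the above at once: arising from $N$, every layer-sum $l$, every entry $\le 1$, all $k$ slices pairwise distinct, and the prescribed contents of $T_{s^\ast}$. This is a degree-constrained assignment (transportation) problem on the bipartite cells-versus-layers incidence structure, and this is precisely where hypotheses (iii) and (iv) enter: (iii) makes the splitting of the $|I|$ value-$2$ cells feasible and provides the $\ge k$ value-$1$ cells used as per-layer markers to force the slices pairwise distinct, while (iv) — bounding $|I| + t$, the count of points and $\sigma$-pairs that are constrained, safely below $kl/2$ — supplies the slack needed to realise $T_{s^\ast}$ and to break the $\sigma$-symmetry without overflowing any layer or creating an entry exceeding $1$. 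Once such a $W$ is exhibited, triviality of $G_{\P,\Q,\mathcal{T}}$ is immediate from Lemmas~\ref{matchar} and~\ref{top}.
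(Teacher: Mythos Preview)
Your approach via the intersection array $W$ and Lemma~\ref{top} is sound, but the paper takes a more direct route that sidesteps the global feasibility problem you flag as the main obstacle. Rather than engineering all $k$ slices of $W$ to force $K_W=1$, the paper writes down the generators of $G_{\P,\Q}$ explicitly --- the transpositions $(b_{(i,j)}\,c_{(i,j)})$ coming from the $2$-cells, together with one involution lifting $\sigma$ --- and then constructs only \emph{two} parts $T_1,T_2$ of $\mathcal{T}$. These are chosen to contain exactly one point from each $2$-cell (so no transposition survives), one $a_{(i_0,j_0)}$ but not its $\sigma$-partner (so the lift of $\sigma$ does not survive), and a fixed point of $G_{\P,\Q}$ in each of $T_1,T_2$ (so any element of $G_{\P,\Q,\mathcal{T}}$ must fix $T_1$ and $T_2$ setwise). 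The remaining $k-2$ parts are arbitrary. This makes the roles of (iii) and (iv) transparent: (iii) ensures $|I|+3\le 2l$ so the required points fit into $T_1\cup T_2$, while (iv) guarantees enough fixed points of $G_{\P,\Q}$ to pad $T_1\cup T_2$ up to size $2l$.

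Your route would also succeed, but the crux you identify --- simultaneously realising $0/1$ entries, layer sums $l$, pairwise distinct slices, and the prescribed $T_{s^\ast}$ --- is more laborious than necessary, and your attribution of (iii) and (iv) to ``slack in the transportation problem'' is vaguer than the paper's clean accounting. The simplification you are missing is that one need not control all $k$ slices: killing the explicit generators of $G_{\P,\Q}$ requires only two carefully chosen parts.
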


\begin{proof}
 For $(i,j)\in\supp(\sigma)$, if $n_{ij}=1$ let $P_i\cap Q_j = \{a_{(i,j)}\}$ 
 and if $n_{ij}=2$ let $P_i\cap Q_j=\{b_{(i,j)},c_{(i,j)}\}$. Let $J_1$ be a subset of 
 $\supp(\sigma)$ consisting of exactly one of $(i,j)$ or $(i,j)^\sigma$ for each 
 $(i,j)\in \supp(\sigma)$ with $n_{ij}=1$, and let $J_2 \subseteq\supp(\sigma)$ be defined in the same way
 but for $n_{ij}=2$. Then
$$G_{\P,\Q}=\left\langle\{(b_{(i,j)}\,c_{(i,j)}) : (i,j)\in I\}\cup \left\{\left(\prod_{(i,j)\in J_1}(a_{(i,j)}\,a_{(i,j)^\sigma})\right)\left(\prod_{(i,j)\in J_2}(b_{(i,j)}\,b_{(i,j)^\sigma})(c_{(i,j)}\,c_{(i,j)^\sigma})\right)\right\}\right\rangle.$$ 

We now see that $|\fix_{[kl]}(G_{\P,\Q})|=kl-2|I|-2|J_1|$, but $|J_1|\leq t$, hence \begin{equation}\label{fixeq}
|\fix_{[kl]}(G_{\P,\Q})|+|J_1|\geq kl-2|I|-t\geq kl-|I|-\left(\frac{kl}{2}-1\right)=\frac{kl}{2}-|I|+1\end{equation} 
by assumption (iv). In particular, from $|J_1|\leq t$ we deduce that 
$$|\fix_{[kl]}(G_{\P,\Q})|\geq 1+\frac{kl}{2}-|I|-t\geq 2,$$ 
and since $k\geq 7$ we also deduce from~\eqref{fixeq} that $|\fix_{[kl]}(G_{\P,\Q})|+ |J_1|> 2l-|I|$.

Construct a $2l$-set $T \subseteq [kl]$ as follows. For each $(i,j)\in I$ include $b_{i,j}$, and for one $(i,j)\in J_1$ include $a_{(i,j)}$ (such an $(i,j)$ exists by assumption (ii)). Next, include at least two fixed points of $G_{\P,\Q}$, so that $|T|$ is now $2+1+|I|\leq 2l$ by assumption (iii). Finally, since $|\fix_{[kl]}(G_{\P,\Q})|+|J_1|+|I|> 2l$ we may add arbitrary elements of $\fix_{[kl]}(G_{\P,\Q})$ and $\{a_{(i,j)} : (i,j)\in J_1\}$ to $T$ until  $|T|=2l$. Partition $T$ into $l$-sets $T_1$ and $T_2$ such that each contains a fixed point of $G_{\mathcal{P},\Q}$. Let $\mathcal{T}$ be any partition of $[kl]$ containing $T_1$ and $T_2$. Then since both $T_1$ and $T_2$ contain points fixed by $G_{\mathcal{P},\Q}$, it follows that both are fixed by $G_{\mathcal{P},\Q,\mathcal{T}}$. Finally,  $a_{(i,j)}\in T$ for some $(i,j)\in J_1$, whereas $a_{(i,j)^{\sigma}}\not\in T$, so $G_{\P,\Q,\mathcal{T}}\leq\langle(b_{(i,j)}\,c_{(i,j)}) : (i,j)\in I\rangle$. These generators have disjoint supports, and none fix both $T_1$ and $T_2$, hence $G_{\P,\Q,\mathcal{T}}=1$.
\end{proof}

\subsection{Proof of Theorem~\ref{camskl}}\label{many} 
Throughout, let $G = \S_{k \times l}$. We start with a similar set-up as in \cite{cdvrd} --- in particular, we suppose that $l>2$ and $k\ge 2$, and
write $l=qk+r$ with $-2\leq r\leq k-3$ whenever $k\geq 4$, and $0\leq r\leq k-1$ otherwise. We define an equivalence relation $\sim$ on the set of $(k,l)$-intersection arrays given by $A\sim B$ if and only if $A^*=B^*$ and $|K_A|= |K_B|$: notice that if $A \sim B$ then $|G(A)| = |G(B)|$, although the converse does not hold. Throughout the remaining proofs, statements of uniqueness shall be taken up to $\sim$.

\begin{prop}\label{logbase}
Suppose that either $k\geq 8$, or that $k = 7$ and $-2\leq r\leq 2$. Then $$\mathcal{G}(\A_{k\times l})\leq\mathcal{G}(G)=\lceil\log_k (l+3)\rceil+1.$$
\end{prop}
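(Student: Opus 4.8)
The plan is to follow the greedy algorithm through its first two or three choices, reduce to a situation governed by Lemma~\ref{key}, and then carry out the logarithmic bookkeeping. Since $G=\S_{k\times l}$ is transitive on $\Pi$, we may fix $\P_1$ arbitrarily. The greedy algorithm then selects $\P_2$ with $N:=M(\P_1,\P_2)$ of minimal stabiliser order $|K_N|\prod_{i,j}n_{ij}!$; using the description of minimal $2$-point stabilisers in~\cite{cdvrd}, the array $N$ is explicitly known up to $\sim$. The properties I need from it are: (a) $\max(\mathcal{S}_2)=q+1$ (the entries of $N$ lie within $1$ of $l/k$, and a little spread into $\{q-1,q,q+1\}$ is always favourable over a flat array when $k\ge 7$, with at most finitely many small exceptions handled computationally); (b) each of $q$ and $q+1$ occurs at least $k$ times among the entries, so, since consecutive integers are not both divisible by $k$, some $t_2\in\{q,q+1\}$ has $t_2\not\equiv 0\pmod k$ and $\omega_{\mathcal{S}_2}(t_2)\ge k$; and (c) whether $\P_1,\P_2$ are partwise fixed by $G_{\P_1,\P_2}$, equivalently whether $K_N=1$.

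If $\P_1,\P_2$ are partwise fixed --- which can be arranged whenever a suitable asymmetric array exists, in particular (via an $r$-regular bipartite graph on $[k]\sqcup[k]$ with trivial colour-preserving automorphism group) whenever $3\le r\le k-3$ --- then Lemma~\ref{key} applies at $m=2$ and gives $\mathcal{G}(G)=2+\mathcal{G}(G_{\P_1,\P_2})=2+\lceil\log_k(q+1)\rceil$. Otherwise $K_N$ is a nontrivial, essentially cyclic group, and I pass to step $3$: using Lemma~\ref{matchar}, I construct from $N$ a $3$-array $W$ arising from it by cutting each $2$-wise intersection of $\P_1,\P_2$ into $k$ nearly-equal pieces, chosen so that $W$ is not invariant under the cyclic symmetry of $N$. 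All such $W$ share the same (minimal) product $\prod w_{ijs}!$, so the greedy algorithm is forced to pick a corresponding $\P_3$, which has $K_W=1$; thus $\P_1,\P_2,\P_3$ are partwise fixed, $\max(\mathcal{S}_3)=\lceil(q+1)/k\rceil$, and $\mathcal{S}_3$ still contains at least two consecutive sizes of multiplicity $\ge k$ (the bulk of the $2$-wise intersections carry the abundant size and each splits into $k$ parts), so some $t_3\not\equiv 0\pmod k$ has $\omega_{\mathcal{S}_3}(t_3)\ge k$. Lemma~\ref{key} with $m=3$ then gives $\mathcal{G}(G)=3+\lceil\log_k\lceil(q+1)/k\rceil\rceil=3+\lceil\log_k(q+1)\rceil-1=2+\lceil\log_k(q+1)\rceil$, the middle equality being the identity $\lceil\log_k\lceil x/k\rceil\rceil=\lceil\log_k x\rceil-1$ used in the proof of Lemma~\ref{logfacts} (applicable since $q\ge 1$, as $l>2$). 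Hence in all cases $\mathcal{G}(G)=2+\lceil\log_k(q+1)\rceil$.

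Next I rewrite $2+\lceil\log_k(q+1)\rceil$ as $\lceil\log_k(l+3)\rceil+1$, i.e.\ verify $\lceil\log_k(l+3)\rceil=\lceil\log_k(q+1)\rceil+1$. When $-2\le r\le 2$ this is exactly Lemma~\ref{logfacts}. When $3\le r\le k-3$ (so $k\ge 8$) it follows by direct estimate: for $q\ge 1$, with $e:=\lceil\log_k(q+1)\rceil$ we have $k^{e-1}\le q\le k^e-1$, hence $k^e<qk+3\le l+3\le(k^e-1)k+(k-3)+3=k^{e+1}$, so $\lceil\log_k(l+3)\rceil=e+1$; and for $q=0$ we have $r=l\ge 3$, so $6\le l+3\le k$ and both sides equal $1$. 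Finally, every $\S_{k\times l}$-stabiliser met before the identity is a direct product of symmetric groups, hence contains a transposition; so $\A_{k\times l}$ makes the identical greedy choices and halts no later, and together with the $\A_{k\times l}$ clauses of Lemma~\ref{key} this yields $\mathcal{G}(\A_{k\times l})\le\mathcal{G}(G)$.

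The main obstacle is twofold and both parts concern the non-partwise-fixed case: extracting from~\cite{cdvrd} the exact $\sim$-class of the minimal $2$-array $N$ --- its cyclic-type symmetry group and the multiset $\mathcal{S}_2$ --- and then verifying that the greedy minimisation of $|K_W|\prod w_{ijs}!$ at step $3$ really does produce a partwise-fixed $W$ with balanced entries rather than one that inherits part of the symmetry of $N$. Lemmas~\ref{matchar} and~\ref{top} and the multiplicity estimate of Lemma~\ref{minN} do the work here; the partwise-fixed case is comparatively routine, and the logarithmic manipulations are elementary.
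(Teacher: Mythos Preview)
Your outline is largely correct and tracks the paper's strategy: quote the minimal $2$-array from~\cite{cdvrd}, then feed it to Lemma~\ref{key}. In the partwise-fixed case your argument coincides with the paper's. The divergence is in the non-partwise-fixed subcase, which by~\cite{cdvrd} arises only for $r\in\{-1,0,1\}$ and specific small $l$: the paper invokes Lemma~\ref{trivstab} to build a third partition $\mathcal{T}$ with $G_{\P,\Q,\mathcal{T}}=1$ directly, whereas you propose to pass to a balanced $3$-array $W$ and apply Lemma~\ref{key} at $m=3$. But in those cases $N^\ast\subseteq\{0,1,2\}$, so any balanced $W$ has all entries at most $1$ and $\prod w_{ijs}!=1$; your step therefore reduces to exhibiting \emph{some} $W$ arising from $N$ with $K_W=1$, which is precisely the content of Lemma~\ref{trivstab}. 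So the ``break the symmetry'' passage is not a route around Lemma~\ref{trivstab} but a restatement of it, and your sketch leaves a gap: choosing $W$ not invariant under $K_N$ kills only the projection of $K_W$ to the first two coordinates, not the $\S_k$-action on the third index. You still need an explicit construction (of the kind Lemma~\ref{trivstab} supplies, using the involutory structure of $K_N$) to force $K_W=1$.

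Two smaller points. First, the $m$-point stabilisers are \emph{semidirect} products $(\prod\mathrm{Sym}):K_W$ by Lemma~\ref{top}, not direct products; once all intersection sizes drop to $\le 1$ the stabiliser is $K_W$, which need not contain a transposition, so your argument for $\mathcal{G}(\A_{k\times l})\le\mathcal{G}(G)$ should instead cite the $\A_{k\times l}$ clause of Lemma~\ref{key} (as the paper does) together with Lemma~\ref{trivstab}. Second, your case split understates the partwise-fixed range: by~\cite[Propositions~3.2 and~3.4]{cdvrd} it already covers $r=\pm 2$ as well as $3\le r\le k-3$, and for $r\in\{-1,0,1\}$ it covers all but the exceptional small-$l$ instances handled by Lemma~\ref{trivstab}.
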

\begin{proof}
Let $\P$ and $\Q$ be the first two $(k,l)$-partitions chosen by the greedy algorithm. Suppose first that either $k\geq 8$ and $r\geq 3$, or that $k\geq7$ and $r=\pm2$. By \cite[Propositions 3.2 and 3.4]{cdvrd}, $\P$ and $\Q$ satisfy the hypotheses of Lemma~\ref{key} with largest intersection size $q+1$. Therefore, by Lemmas~\ref{key} and~\ref{logfacts} we deduce that $ \mathcal{G}(\A_{k \times l}) \leq \mathcal{G}(G)=\mathcal{G}(G_{\P,\Q})+2=\lceil \log_k (l+3)\rceil+1$, as desired.

 Suppose instead that $k\geq 7$ and $-1\leq r\leq 1$. Then by \cite[Propositions 3.6 and 3.8]{cdvrd}, either $\P$ and $\Q$ satisfy the conditions of Lemma~\ref{key} with greatest intersection size $q+1$, or $N=M(\P,\Q)$ satisfies the conditions of Lemma~\ref{trivstab} (see \cite[Proposition 3.6, Case $m=k-1$ and Proposition 3.8, Case $m=\lceil k/2\rceil-1$ (ii)]{cdvrd} for further details on the involution). In the first case 
 $$\mathcal{G}((\A_{k \times l})_{\P, \Q}) \leq \mathcal{G}(G_{\P,\Q})=\lceil\log_k (q+1)\rceil=\lceil\log_k (l+3)\rceil-1$$ 
 by Lemmas~\ref{key} and~\ref{logfacts}, and in the second $\mathcal{G}(\A_{k \times l}) \leq 
 \mathcal{G}(G)=3=\lceil\log_k (l+3)\rceil+1$ by Lemma~\ref{trivstab}.
\end{proof}

To finish our study of $\S_{k\times l}$ it remains to consider $k\leq 7$. Here we prove that the greedy algorithm constructs a small base for $l$ sufficiently large. Throughout, we represent $(k,l)$-intersection 3-arrays  as elements of $(\mathbb{Z}^k)^{k\times k}$.  

We start with $k=2$. We prove the following result by exhibiting the unique $(2,l)$-intersection array obtained from the first three choices of greedy, and then applying Lemma~\ref{key}. 
\begin{prop}\label{k=2}
Suppose $l\geq 10$. Then $$\mathcal{G}(\A_{2\times l})\leq\mathcal{G}(\S_{2\times l})\leq\lceil \log_2 (l+8)\rceil+1.$$
\end{prop}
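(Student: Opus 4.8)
The plan is to follow the same strategy used for the $k \geq 7$ cases, but now specialised to $k=2$: identify explicitly the $(2,l)$-intersection $3$-array produced by the first three greedy choices, verify it satisfies the hypotheses of Lemma~\ref{key}, and then read off the bound. Since $G = \S_{2 \times l}$ is transitive on $\Pi$, the greedy algorithm may be thought of as first selecting a $2$-array $N = M(\P_1,\P_2)$ with $|G(N)|$ minimal, and then a $3$-array $W = M(\P_1,\P_2,\P_3)$ arising from $N$ with $|G(W)|$ minimal (using Lemma~\ref{top} and Lemma~\ref{matchar}). For $k=2$ the $2$-arrays are $2 \times 2$ matrices with all row and column sums equal to $l$; minimising $|G(N)| = |K_N| \prod (n_{ij}!)$ subject to the balance constraints forces the entries to be as equal as possible, i.e. $n_{ij} \in \{\lceil l/2 \rceil, \lfloor l/2 \rfloor\}$, and one checks (as in \cite{cdvrd}) that the unique such array up to $\sim$ has both diagonals consisting of a $\lceil l/2\rceil$ and a $\lfloor l/2\rfloor$ in cyclic-shift form. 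A third choice then splits each of these four intersection blocks as evenly as possible into two pieces, and one verifies that the resulting $W$ has all $3$-wise intersection sizes in $\{\lceil l/4 \rceil, \ldots\}$ close to $l/4$, that each $\P_i$ is partwise fixed by $G_{\P_1,\P_2,\P_3}$ (the cyclic-shift structure makes the relevant rows/columns distinct, killing $K_W$), and that $\omega_{\mathcal{S}_3}(t_3) \geq 2$ for some $t_3 \not\equiv 0 \pmod 2$, i.e. some odd value occurs at least twice among the eight $3$-wise intersection sizes.

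Once that verification is in place, Lemma~\ref{key} gives $\mathcal{G}(\A_{2\times l}) \leq \mathcal{G}(\S_{2\times l}) = \mathcal{G}(G_{\P_1,\P_2,\P_3}) + 3 = \lceil \log_2(\max(\mathcal{S}_3)) \rceil + 3$, and $\max(\mathcal{S}_3)$ is the largest $3$-wise intersection size, which is at most $\lceil \lceil l/2 \rceil / 2 \rceil = \lceil l/4 \rceil$. So the remaining task is the elementary estimate $\lceil \log_2 \lceil l/4 \rceil \rceil + 3 \leq \lceil \log_2(l+8) \rceil + 1$, i.e. $\lceil \log_2 \lceil l/4 \rceil \rceil \leq \lceil \log_2(l+8) \rceil - 2$; since $\lceil \log_2 \lceil l/4 \rceil \rceil = \lceil \log_2 l \rceil - 2$ for $l \geq 2$, and $\lceil \log_2 l \rceil \leq \lceil \log_2 (l+8) \rceil$ trivially, this holds (with room to spare — the $l+8$ slack is presumably there to absorb the behaviour in the small-$l$ or odd-$l$ corner cases analogous to Lemma~\ref{logfacts}). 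The hypothesis $l \geq 10$ is what guarantees that the case analysis in \cite{cdvrd} delivering the structure of the first two partitions applies and that the third greedy step behaves as described rather than terminating early.

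The main obstacle I anticipate is the bookkeeping for the third choice: unlike the $k \geq 7$ situation where Lemma~\ref{key}'s hypotheses are handed over directly from \cite[Propositions 3.2--3.8]{cdvrd}, here $k=2$ is a genuinely separate small case, so one must nail down the exact $3$-array explicitly (as an element of $(\Z^2)^{2\times 2}$) and confirm both the partwise-fixed condition and the "some odd value appears at least twice" condition by hand. The parity of $l$ matters: when $l \equiv 0 \pmod 4$ all eight $3$-wise intersections equal $l/4$, and one needs $l/4$ to be the relevant $t_3$ — but $l/4$ could be even, in which case the condition $t_3 \not\equiv 0 \pmod 2$ fails and one must instead argue that the greedy algorithm makes a slightly different choice, or absorb this into why the bound is stated with the generous $l+8$. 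Sorting out these residue classes of $l \bmod 4$ (and checking that in each the hypotheses of Lemma~\ref{key} can be met, possibly after perturbing the third partition within the allowed freedom) is where the real work lies; everything after invoking Lemma~\ref{key} is a one-line logarithm manipulation.
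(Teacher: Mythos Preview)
Your overall strategy matches the paper's: identify the greedy $2$-array $N$ and $3$-array $W$ explicitly, verify the hypotheses of Lemma~\ref{key}, then finish with a logarithm estimate. However, your identification of $N$ and $W$ is wrong in a way that is not just a bookkeeping slip.

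You claim that minimising $|G(N)|=|K_N|\prod n_{ij}!$ forces the entries to be as equal as possible, i.e.\ $n_{ij}\in\{\lceil l/2\rceil,\lfloor l/2\rfloor\}$. This is false when $l$ is even: the all-$q$ matrix $qJ$ (with $q=l/2$) has $|K_{qJ}|=4$, whereas the matrix with diagonal entries $q+1$ and off-diagonal entries $q-1$ has $|K|=2$, and for $q\geq 3$ one computes $|G(qJ)|/|G(N)|=2q^2/(q+1)^2>1$. So greedy picks the \emph{unbalanced} array $N=\begin{bmatrix}\lfloor l/2\rfloor+1 & \lceil l/2\rceil-1\\ \lceil l/2\rceil-1 & \lfloor l/2\rfloor+1\end{bmatrix}$, which agrees with your guess only when $l$ is odd. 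The same phenomenon recurs at the third step: the greedy $3$-array does \emph{not} split each $n_{ij}$ as evenly as possible, because the perfectly balanced split has nontrivial $K_W$. The paper handles this by an explicit case split on $l\bmod 4$, exhibiting in each case a $W$ with $K_W=1$ and one deliberately off-balance entry (e.g.\ $w_{11}=(\lceil n_{11}/2\rceil+1,\lfloor n_{11}/2\rfloor-1)$ when $l\not\equiv 2\pmod 4$), and checking by direct comparison against the two or three competing arrays that this $W$ is uniquely optimal. The resulting bound is $\max W^*\leq \lfloor l/4\rfloor+2$, not your $\lceil l/4\rceil$, and this is exactly what the $+8$ in $\log_2(l+8)$ is absorbing.

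You correctly flag the $l\equiv 0\pmod 4$ parity obstruction in your final paragraph, but you misdiagnose its resolution: it is not that one perturbs the third partition within some freedom, nor that the $l+8$ slack papers over a failure of Lemma~\ref{key}'s hypotheses. Rather, greedy itself chooses an unbalanced $W$ precisely because the balanced one has $|K_W|>1$, and this unbalanced $W$ automatically has an odd entry of multiplicity $\geq 2$. So the ``real work'' you anticipate is there, but it is a genuine optimisation argument over the finitely many candidate $3$-arrays in each residue class, not a perturbation.
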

\begin{proof}
A $(2,l)$-intersection 2-array $A$ is uniquely determined by its $(1,1)$ entry, and furthermore if $a_{11}=a_{12}$ then $|K_A|=4$, whilst $|K_A|=2$ otherwise. Consider $$N=\begin{bmatrix}\lfloor l/2\rfloor +1&\lceil l/2\rceil-1\\\lceil l/2\rceil-1&\lfloor l/2\rfloor+1\end{bmatrix}.$$ If $\max A^*\leq \lfloor l/2\rfloor$, then $A=qJ$, and so $|G(A)|=4(q!)^4=\frac{2q^2}{(q+1)^2}|G(N)|>|G(N)|$, since $q\geq 5$. On the other hand  by Lemma~\ref{minN}, $|G(N)| < |G(B)|$ for   all $(2,l)$-intersection 2-arrays $B$ with $\max B^*>\lfloor l/2\rfloor+1$. Therefore, $N$ is chosen by the greedy algorithm. We now determine the intersection 3-array $W$ chosen by the greedy algorithm.

\medskip

\noindent \underline{\textbf{Case $l\equiv 2 \pmod 4 $:}} Write $l = 4m + 2$ and let $$W=\begin{bmatrix}
   (m+2,m)&(m,m)\\
  (m-1,m+1)&(m+1,m+1)\end{bmatrix}.$$ Then $K_W=1$: we show that $W$ yields a 3-point stabiliser of minimum size. First, note that $w_{i2t}\in\left\{\lfloor n_{i2}/2\rfloor,\lceil n_{i2}/2\rfloor\right\}=\{m,m+1\}$. Additionally, by~Lemma~\ref{minN}, $w_{111}!w_{112}!=(m+2)!m!$ is minimum amongst all pairs $a!b!$ with $a + b = 2m+2$ and $\max\{a, b\} \ge m+2$, and $w_{211}!w_{212}!=(m-1)!(m+1)!$ is minimum amongst all such pairs with $a + b = 2m$ and $\max\{a, b\} \ge m+1$.  Moreover, there is no intersection $3$-array $V$ arising from $N$ which contains exactly one pair $v_{ij1}\ne v_{ij2}$, as then $\sum_{1\leq i,j\leq 2}v_{ij1}\ne \sum_{1\leq i,j\leq 2}v_{ij2}$. Therefore, there are exactly two arrays $V_1$ and $V_2$ distinct from $W$ such that $|G(V_i)/K_{V_i}|\leq|G(W)/K_W| = |G(W)|$ --- they are $$V_1=\begin{bmatrix}
        (m+2,m)&(m,m)\\
        (m,m)&(m,m+2)\\
    \end{bmatrix}\,\text{and }
    V_2=\begin{bmatrix}
        (m+1,m+1)&(m,m)\\
        (m,m)&(m+1,m+1)\\
    \end{bmatrix}
$$
Now, $K_{V_1}=\langle ((1\,2),(1\, 2),(1\,2))\rangle$, so that $|G(V_1)|=2\cdot(m+2)!^2m!^6=\frac{2m(m+2)}{(m+1)^2}|G(W)|>|G(W)|$, and $K_{V_2}=\langle ((1\,2),(1\, 2),1),(1,1,(1\,2))\rangle$, hence $|G(V_2)|=4\cdot (m+1)!^4m!^4=\frac{4m(m+1)}{(m+2)(m+1)}|G(W)|>|G(W)|$.
Therefore, $W$ uniquely yields a $3$-point stabiliser of minimum size.

\medskip 

\noindent \underline{\textbf{Case $l\not\equiv 2\pmod 4$:}}
Let $$W = \begin{bmatrix}(\lceil n_{11}/2\rceil+1,\lfloor n_{11}/2\rfloor-1)&(\lfloor n_{12}/2\rfloor,\lceil n_{12}/2\rceil)\\
(\lfloor n_{12}/2\rfloor,\lceil n_{12}/2\rceil)&(\lfloor n_{11}/2\rfloor,\lceil n_{11}/2\rceil)\end{bmatrix}.$$
Then $K_W = 1$: we shall show that $W$ is unique such that $|G(W)|$ is minimal. 

If $(i,j)\ne (1,1)$, then $w_{ijt}\in\left\{\lfloor n_{ij}/2\rfloor,\lceil n_{ij}/2\rfloor\right\}$. Moreover, by~Lemma~\ref{minN}, $w_{111}!w_{112}!$ is minimum amongst all $a!b!$ with $a+b= n_{11}$ and
$\max\{a, b\} \ge \lceil n_{11}/2\rceil+1$. Therefore, since $n_{11}$ is the largest entry of $N$ we deduce that any array $V\not\sim W$, arising from $N$ such that $|G(V)/K_V|\leq|G(W)/K_W| = |G(W)|$ has permutations of $(\lceil n_{11}/2\rceil,\lfloor n_{11}/2\rfloor)$ for $v_{11}$ and $v_{22}$, and permutations of $(\lceil n_{12}/2\rceil,\lfloor n_{12}/2\rfloor)$ for $v_{12}$ and $v_{21}$. 
Each such array $V$ satisfies $|K_V| \ge 2$, whence a short calculation yields $$|G(V)|\geq2(\lceil n_{11}/2\rceil)!^2(\lfloor n_{11}/2\rfloor)!^2(\lceil n_{12}/2\rceil)!^2(\lfloor n_{12}/2\rfloor)!^2=\frac{2\lfloor n_{11}/2\rfloor}{\lceil n_{11}/2\rceil+1}|G(W)|>|G(W)|,$$ since $n_{11}\geq 6$. Hence again $W$ uniquely yields a $3$-point stabiliser of minimum size.

\medskip

 Therefore, for all $l$, after three steps the greedy algorithm arrives at  $W$. In both cases there is an $x \in W^*$ such that $\omega_W(x),\omega_W(x+1)\geq 2$, and one of $x$ and $x+1$ is odd, so $W$ satisfies the conditions of Lemma~\ref{key}. Moreover, $\max W^*\leq \lfloor l/4\rfloor+2$, whence $$\mathcal{G}(A_{2\times l})\leq\mathcal{G}(G)\leq \lceil\log_2 (\lfloor l/4\rfloor +2)\rceil+3\leq\lceil\log_2(l+8)\rceil+1,$$ as desired.
\end{proof}

We split the case $k=3$ into Propositions~\ref{k=3} and \ref{other3}, depending on $l \pmod 3$. 

\begin{prop}\label{k=3}
Suppose $k=3$ and $l=3q$ for some $q\geq 5$. Then 
$$\mathcal{G}(\A_{k\times l})\leq\mathcal{G}(\S_{k\times l})\leq\lceil \log_3 (l+9)\rceil+1.$$
\end{prop}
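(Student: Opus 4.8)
The plan is to follow the template established in Proposition~\ref{k=2}: determine explicitly (up to $\sim$) the $(3,l)$-intersection 2-array $N$ chosen at step two, then the 3-array $W$ chosen at step three, verify that $W$ satisfies the hypotheses of Lemma~\ref{key}, and finally read off the bound by combining $\max(W^\ast) \le \lceil l/9 \rceil + O(1)$ with the recurrence in Lemma~\ref{key}. For the first step I would invoke the relevant case of \cite[Section 3]{cdvrd} (with $k = 3$, $l \equiv 0 \pmod 3$) to obtain $N = M(\P,\Q)$: I expect $N$ to be (equivalent to) a cyclic-shift matrix $\theta(v)$ or a small perturbation thereof with entries near $l/3 = q$, largest entry roughly $q+1$, and $|K_N| = 3$. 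As in Proposition~\ref{k=2}, any competing 2-array has either all entries equal to $q$ (a multiple of $J$), giving a strictly larger stabiliser by a ratio of the form $\tfrac{(\text{const})q^2}{(q+1)^2}|G(N)|$ once $q \ge 5$, or else an entry exceeding the maximum of $N$, which Lemma~\ref{minN} rules out.

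For the second step I would write down the candidate $W \in (\Z^3)^{3\times 3}$ obtained by refining each part of $N$ into three pieces of size $\lfloor n_{ij}/3\rfloor$ or $\lceil n_{ij}/3\rceil$, perturbing the fibre over the largest entry $n_{11}$ to $(\lceil n_{11}/3\rceil + 1, *, *)$ à la Lemma~\ref{minN} so that $K_W = 1$, and arranging the remaining fibres cyclically so that no nontrivial coordinate permutation stabilises $W$. The verification that $W$ is the unique minimiser (up to $\sim$) proceeds exactly as in Proposition~\ref{k=2}: by Lemma~\ref{minN} the fibre over $n_{11}$ is forced, every other fibre must consist of near-equal entries, and any array $V \not\sim W$ arising from $N$ with $|G(V)/K_V| \le |G(W)|$ is forced to have $|K_V| \ge 3$ (since the non-forced fibres are constant on the cyclic orbits), whence a short ratio computation of the shape $\tfrac{(\text{const})\lfloor n_{11}/3\rfloor}{\lceil n_{11}/3\rceil + 1}|G(W)| > |G(W)|$ for $q \ge 5$ finishes it. One must check here, as in the $k=2$ case, that there is no array with exactly one non-constant fibre, since fibre sums over distinct layers would then differ.

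For the third step: $W$ has entries near $q/3 = l/9$, and by construction both $\lfloor t/3\rfloor$ and $\lceil t/3\rceil$ appear at least three times for the relevant $t$, with at least one of these not divisible by $3$; together with $K_W = 1$ (so each partition is partwise fixed) this gives the hypotheses of Lemma~\ref{key}. Then $\mathcal{G}(\A_{3\times l}) \le \mathcal{G}(\S_{3\times l}) = 3 + \lceil \log_3(\max(W^\ast))\rceil$, and bounding $\max(W^\ast) \le \lceil l/9 \rceil + 2$ yields $\lceil \log_3(\lceil l/9\rceil + 2)\rceil + 3 \le \lceil \log_3(l + 9)\rceil + 1$ by the same elementary logarithm manipulation used at the end of Proposition~\ref{k=2}.

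The main obstacle I anticipate is the bookkeeping in step two, precisely because $k = 3$ is small: the perturbation that breaks $K_W$ down to the trivial group shifts entries by $1$, and for small $q$ this shift is not negligible relative to $q/3$, so I will need to track $l \bmod 9$ (and perhaps $l \bmod 3$ within that) carefully to be sure that (a) the claimed $W$ genuinely arises from $N$, i.e. its fibre sums match the entries of $N$ and its layer sums all equal $l$, and (b) the competing-array ratio estimates stay strictly above $1$ down to $q = 5$. A secondary nuisance is that Proposition~\ref{other3} handles $l \not\equiv 0 \pmod 3$ separately, so I should make sure the $l = 3q$ argument here does not implicitly assume anything about divisibility that fails; but since this proposition is exactly the $r = 0$ case, the structure of $N$ from \cite{cdvrd} should be cleanest here. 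Everything downstream of pinning down $W$ is routine and parallels Proposition~\ref{k=2} almost verbatim.
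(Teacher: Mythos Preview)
Your template is right but your expectation about the shape of $N$ is wrong, and this derails the rest of the plan. For $k=3$, $l=3q$, the reference \cite{cdvrd} does not supply the minimising $2$-array (those results are used only for $k\ge 7$); the paper computes $N$ from scratch, and it turns out to be
\[
N=\begin{bmatrix}
q-2&q+2&q\\
q+2&q-1&q-1\\
q&q-1&q+1
\end{bmatrix}
\]
with $K_N=1$, not a cyclic-shift matrix with $|K_N|=3$ and largest entry $q+1$. Because $K_N$ is already trivial, whenever $q\not\equiv 1\pmod 3$ the array $N$ itself satisfies the hypotheses of Lemma~\ref{key} (with $t_2=q-1$), and no $3$-array step is needed at all; the $3$-array $W$ is only required in the residual case $q=3m+1$.

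The second problem is that your dichotomy ``either $A=qJ$ or $\max A^*$ exceeds $\max N^*$'' is far too coarse here. With $k=3$ there is a genuine zoo of candidate $2$-arrays with $\max A^*\in\{q+1,q+2\}$, and the paper has to enumerate and eliminate roughly a dozen of them by explicit ratio comparisons (five arrays with $\max A^*=q+1$, three with a unique $q+2$, five more with two copies of $q+2$). Lemma~\ref{minN} alone does not pin down $N$; it only bounds the factorial product, and several competitors achieve the same product but lose on the $|K_A|$ factor or vice versa. Your plan as written would not discover $N$, would not establish its optimality, and would then build the $3$-array analysis on a non-existent $|K_N|=3$ structure.
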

\begin{proof}
We shall first exhibit a $(3,l)$-intersection 2-array $N$, then show that $N$ uniquely yields a stabiliser of minimum order and hence is chosen by the greedy algorithm. 

Let $$N=\begin{bmatrix}
q-2&q+2&q\\
q+2&q-1&q-1\\
q&q-1&q+1
\end{bmatrix}$$
be a $(3,3q)$-intersection array.
Each row and column has a distinct multiset of entries, so $K_N = 1$  by~\cite[Lemma 2.5]{cdvrd}, and $|G(N)|=(q+2)!^2(q+1)!q!^2(q-1)!^3(q-2)!$.

Let $A=(a_{ij})$ be a $(3,l)$-intersection 2-array minimising $|G(A)|$ with $a_{11}=\max A^*$. Throughout this proof all statements describing the structure of $A$ are to be taken up to permutation/transposition. If $a_{11}=q+1$, then every row and column of $A$ has multiset of entries one of $\{\{(q+1)^2,q-2\}\}$, $\{\{q^3\}\}$, or $\{\{q+1,q-1,q\}\}$, hence one may verify that $A$ is one of $$qJ,\begin{bmatrix} q+1&q-1&q\\
q-1&q&q+1\\
q&q+1&q-1\end{bmatrix},
\begin{bmatrix} q+1&q+1&q-2\\
q-1&q&q+1\\
q&q-1&q+1\end{bmatrix},
\begin{bmatrix} q+1&q+1&q-2\\
q+1&q-2&q+1\\
q-2&q+1&q+1\end{bmatrix},
\begin{bmatrix} q+1&q-1&q\\
q-1&q+1&q\\
q&q&q\end{bmatrix}.$$
Thus $|G(A)|$ is ${36(q!)^9},$ ${3(q+1)!^3q!^3(q-1)!^3},$ or similar expressions 
for the remaining three arrays. A straightforward calculation shows that in each case $|G(A)| \ge |G(N)|$ since $q\geq 5$. Thus if there exists an $A \neq N$ with $G(A) \le G(N)$ then $\max{A^\ast} > q+1$.

By~Lemma~\ref{minN}, if $A$ has at least three entries greater than $q+1$, then $|G(A)|\geq (q+2)!^3(q-1)!^6=\frac{(q-1)(q+2)}{q^2}|G(N)|>|G(N)|$. Hence $A$ has at most two entries greater than $q+1$. Moreover, if $a_{11}>q+2$, then the first row and column of $A$ both have entries less than $q-1$, and hence $A$ either has another entry which is at least $q+2$, or at least three entries $q+1$. Since $(q+3)!(q+2)!q!^2(q-1)!^5<(q+3)!(q+1)!^3(q-1)!^4(q-2)!$ we deduce from~Lemma~\ref{minN} that $$|G(A)|\geq (q+3)!(q+2)!q!^2(q-1)!^5
=\frac{(q+3)(q-1)}{q(q+1)}|G(N)|>|G(N)|.$$ Therefore, $\max A^\ast = a_{11}=q+2$, and $\omega_A(q+2) \in \{1, 2\}$.

Suppose first that $\omega_A(q+2)=1$. Then $\min A^\ast \ge q-2$  (for otherwise the row and column containing $\min A^\ast$ both have an entry at least $q+2$). Therefore, without loss of generality the first row and column of $A$ are $(q+2,q-2,q)$ or $(q+2,q-1, q-1)$, so $A$ is one of
$$\begin{bmatrix}q+2&q-2&q\\q-2&q+1&q+1\\q&q+1&q-1\end{bmatrix}\text{, } \begin{bmatrix}q+2&q-2&q\\q-1&q+1&q\\q-1&q+1&q\end{bmatrix}\text{, }\begin{bmatrix} q+2&q-1&q-1\\q-1&q+1&q\\q-1&q&q+1\end{bmatrix}.$$
In each case, a calculation shows that $|G(A)| > |G(N)|$. We conclude that  $A$ has exactly two entries greater than $q+1$, and these are both $q+2$.

The entries $q+2$ of $A$ are in distinct rows, for otherwise the third entry of the row is $q-4$, and the column containing $q-4$ must contain an entry greater than $q+1$, contrary to assumption. Therefore, the first row of $A$ is one of $(q+2,q-2,q)$, $(q+2,q-1,q-1)$, or $(q+2,q+1,q-3)$. A careful check yields the following possibilities for $A \neq N$:
$$\begin{array}{cll} \begin{bmatrix}
    q+2&q+1&q-3\\
    q-1&q&q+1\\
    q-1&q-1&q+2\\
\end{bmatrix}, & \begin{bmatrix}
    q+2&q+1&q-3\\
    q+1&q-2&q+1\\
    q-3&q+1&q+2\\
\end{bmatrix}, &
\begin{bmatrix}
    q+2&q-2&q\\
    q-2&q+2&q\\
    q&q&q\\
\end{bmatrix},
\\
\\ \begin{bmatrix}
    q+2&q+1&q-3\\
    q-2&q+1&q+1\\
    q&q-2&q+2\\
\end{bmatrix},& \begin{bmatrix}
    q+2&q+1&q-3\\
    q&q-1&q+1\\
    q-2&q&q+2\\
\end{bmatrix}.
\end{array}$$ 

For each such $A$ we may check that $|G(A)| > |G(N)|$.
Therefore $N$ is chosen by the greedy algorithm.

If $q-1$ is not divisible by $3$, then $N$ satisfies the conditions of Lemma~\ref{key},
so $\mathcal{G}(G(N))=\lceil\log_3(q+2)\rceil$. Therefore $$\mathcal{G}(G)=\lceil\log_3(q+2)\rceil+2=\lceil\log_3(3q+6)\rceil+1\leq\lceil\log_3(l+9)\rceil+1,$$ as required. 
Suppose instead that $q=3m+1$ for some $m$. Let $$W=
    \begin{bmatrix}
    (m-1,m,m)&(m+1,m+1,m+1)&(m+1,m,m)\\
    (m+1,m+1,m+1)&(m,m,m)&(m,m,m)\\
    (m,m,m+1)&(m,m,m)&(m+1,m+1,m)\\
\end{bmatrix}$$ 
be a $(3,l)$-intersection array arising from $N$. Since $N$ is partwise fixed it follows that $K_W$ stabilises the rows and columns of $W$, so acts diagonally as a subgroup of $\S_3$ on the entries. But $w_{11}$ and $w_{31}$ show that the first and last coordinate of each entry are fixed, so $W$ is partwise fixed. Moreover, the coordinates of each entry are such that the product of their factorials is minimised, and so the first three partitions chosen by the greedy algorithm will yield $W$. Finally, $W$ satisfies the conditions of Lemma~\ref{key}, with largest entry at most $\lfloor l/9\rfloor+1$, hence $$\mathcal{G}(\A_{3\times l})\leq\mathcal{G}(G)=\lceil\log_3(\lfloor l/9\rfloor+1)\rceil+3\leq\lceil\log_3(l+9)\rceil+1,$$ as desired.
\end{proof}

We use $E(i,j)$ to denote the matrix with $ij$-entry $1$ and all other entries 0.

\begin{prop}\label{k=6-7,3}
 Suppose that $k\in\{6,7\}$, that $q\geq 1$ and that $r \in\{3,k-3\}$. Then 
 $$\mathcal{G}(\A_{k\times l})\leq\mathcal{G}(\S_{k\times l})\leq\lceil \log_{k}(l+2k)\rceil +1.$$
\end{prop}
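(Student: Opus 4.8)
The plan is to follow the pattern of Propositions~\ref{k=3} and \ref{logbase}: identify the intersection 2-array $N = M(\P,\Q)$ produced by the first two steps of the greedy algorithm, check that it meets the hypotheses of Lemma~\ref{key}, and then simplify the resulting logarithm.

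\emph{Identifying $N$.} Since $l = qk + r$ with $r \in \{3, k-3\}$, the average intersection size $l/k$ lies strictly between $q$ and $q+1$. By Lemma~\ref{minN}, subject to all row and column sums of a $(k,l)$-intersection 2-array equalling $l$, the product $\prod_{ij} n_{ij}!$ is minimised by an array all of whose entries lie in $\{q,q+1\}$; such an array corresponds to a $0$-$1$ matrix with all row and column sums equal to $r$, and since $k \in \{6,7\}$ one checks directly whether among these there is one with trivial $\S_k\times\S_k$-stabiliser. Any 2-array $A \not\sim N$ either has an entry outside $\{q,q+1\}$ — whence $\prod_{ij} a_{ij}!$ is strictly larger, by a factor at least $\frac{q+2}{q+1}$, and a short case analysis in the style of Proposition~\ref{k=3} (using Lemma~\ref{minN}, plus an explicit check of the few arrays with maximal entry $q+2$) shows that for $q$ not too small this outweighs any decrease in $|K_A|$ — or has all entries in $\{q,q+1\}$ but $K_A \ne 1$, so $|G(A)| > |G(N)|$. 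The remaining finitely many small values of $l$ are handled computationally. Either way, the array $N$ chosen by the greedy algorithm has $\max N^* \le q+2$, has $K_N = 1$, and — since all but a bounded number of its $k^2$ entries lie in $\{q,q+1\}$ — has at least $k$ entries equal to some $t \in \{q,q+1\}$ with $k \nmid t$.

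\emph{Applying Lemma~\ref{key} and finishing.} From $K_N = 1$ we get that $\P$ and $\Q$ are partwise fixed by $G_{\P,\Q}$, and the previous paragraph gives $\omega_{\mathcal{S}_2}(t) \ge k$ for some $t \not\equiv 0 \pmod{k}$, so Lemma~\ref{key} applies:
$$\mathcal{G}(\A_{k\times l}) \le \mathcal{G}(\S_{k\times l}) = \mathcal{G}(G_{\P,\Q}) + 2 = \lceil \log_k(\max \mathcal{S}_2)\rceil + 2 \le \lceil \log_k(q+2)\rceil + 2.$$
Since $l + 2k = (q+2)k + r > (q+2)k$, we have $\lceil \log_k(l+2k)\rceil \ge \lceil \log_k((q+2)k)\rceil = \lceil \log_k(q+2)\rceil + 1$, and therefore $\mathcal{G}(\S_{k\times l}) \le \lceil \log_k(l+2k)\rceil + 1$, as required. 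Note the generous slack $2k$ in the bound is exactly what makes the argument insensitive to whether $\max N^* = q+1$ or $q+2$.

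\emph{Main obstacle.} The delicate step is the first one: verifying that the near-balanced array $N$ genuinely minimises $|K_A|\prod_{ij} a_{ij}!$ over all admissible 2-arrays, and pinning down its precise shape for $k\in\{6,7\}$ (in particular whether a $0$-$1$ matrix of row/column sum $r$ with trivial stabiliser exists for $k=6$). For large $q$ the factorial product dominates and this is routine, but for small $q$ one must rule out by hand competitors with a single $q+2$ entry, or all-$\{q,q+1\}$ competitors with large $|K|$, and a handful of smallest $l$ need a direct computation. If the relevant propositions of \cite{cdvrd} extend to $k \in \{6,7\}$, this step may instead be quoted directly, as in the proof of Proposition~\ref{logbase}.
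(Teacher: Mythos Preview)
Your overall plan is right and matches the paper's, but you have not resolved the step you yourself flag as the ``main obstacle,'' and the paper's proof turns precisely on that point. For $k\in\{6,7\}$ and $r\in\{3,k-3\}$ there is \emph{no} $k\times k$ $0$--$1$ matrix with all row and column sums $r$ and trivial $\S_k\times\S_k$-stabiliser: such a matrix is exactly a $(k,r)$-intersection $2$-array with all entries in $\{0,1\}$, so by Lemma~\ref{top} its stabiliser in $\S_{k\times r}$ equals $K$; but Morris--Spiga~\cite[Theorem~1.1]{ms} give $b(\S_{k\times r})>2$ for these $(k,r)$, whence $K\ne1$. Hence every $(k,l)$-array $A$ with $A^\ast=\{\{(q+1)^{rk},q^{(k-r)k}\}\}$ has $|K_A|\ge2$, and in fact
\[
|G(A)|\ge 2(q+1)!^{rk}q!^{(k-r)k}=\frac{2(q+1)}{q+2}\,|G(N)|>|G(N)|\quad\text{for all }q\ge1,
\]
where $N=\theta(v)-E(k-2,k-2)+E(k-2,k-1)+E(k-1,k-2)-E(k-1,k-1)$ is the explicit array with $N^\ast=\{\{q+2,(q+1)^{rk-2},q^{k^2-rk+1}\}\}$ and $K_N=1$ (checked directly). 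One further inequality rules out $\min A^\ast\le q-1$, and then $A\sim N$ for every $q\ge1$; no separate treatment of small $l$ is needed, contrary to your sketch.

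So the gap is not merely that you left a check undone: your case split ``either the balanced array has trivial $K$, or\ldots'' lands entirely in the second branch, and you have neither identified the optimal $N$ there nor shown $\max N^\ast\le q+2$ and $K_N=1$. Once you know the Morris--Spiga input, the paper's argument is short and uniform in $q$; your proposed ``case analysis for large $q$, computation for small $q$'' is both unnecessary and, as written, not carried out. After this, your application of Lemma~\ref{key} and the logarithm estimate are fine.
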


\begin{proof}
Let $v \in \Z^k$ have $r$ entries $(q+1)$ then $k-r$ entries $q$, and let 
$$N=\theta(v)-E(k-2,k-2)+E(k-2,k-1)+E(k-1,k-2)-E(k-1,k-1)$$ 
be a $(k,l)$-intersection array, so that $N^* = \{\{(q+2),(q+1)^{rk-2}, q^{k^2-rk+1}\}\}$. One can check computationally that $N$ is partwise fixed.  Lemma~\ref{minN} with $X=[(1,q+2)]$ shows that $$|G(N)/K_N|=|G(N)|=(q+2)!(q+1)!^{rk-2}q!^{k^2-rk+1}$$ is minimum amongst all $(k,l)$-intersection arrays with largest entry at least $q+2$. 

Let $A$ be a $(k,l)$-intersection array with $A^*=\{\{(q+1)^{rk},q^{(k-r)k}\}\}$, so that $(A-qJ)^\ast = \{\{1^{rk},$ $0^{(k-r)k}\}\}$.
Now \cite[Theorem 1.1]{ms} shows that $b(\S_{k\times r})>2$, so $|K_A|=|K_{A-qJ}|\geq 2$. Therefore,
$$|G(A)|\geq 2\cdot (q+1)!^{rk}q!^{(k-r)k} = \frac{2(q+1)}{q+2}|G(N)|>|G(N)|,$$
since $q\geq 1$. 

Thus, if  $A$ is a $(k, l)$-intersection array with $|G(A)|\leq |G(N)|$ then either $A\sim N$, or $\min A^\ast \le q-1$.
If $\min A^\ast \le q-1$, then from Lemma~\ref{minN} we deduce that $$|G(A)|\geq (q-1)!(q+1)!^{rk+1}q!^{(k-r)k-2} =\frac{(q+1)^2}{q(q+2)}|G(N)|>|G(N)|,$$ so $A\sim N$.

Therefore, since at least one of $q+1$ or $q$ is not divisible by $k$, and $\omega_N(q+1)$ and $\omega_N(q)$ are both at least $k$, we conclude that the greedy algorithm chooses an array satisfying the conditions of Lemma~\ref{key} with largest entry at most $\lfloor l/k\rfloor+2$, hence the result.
\end{proof}

The remaining proofs all follow a similar template. Namely, we exhibit a $(k,l)$-intersection array, $N$, yielding a small stabiliser, and through comparisons with $N$ we deduce that any array $A$ minimising $|G(A)|$ satisfies the conditions of Lemma~\ref{key} with some bounded largest entry. Throughout, we consider $A$ only up to $\sim$. 

\begin{prop}\label{other3}
    Suppose $k\in\{3,4\}$, and that $l=qk\pm1$ for some $q\geq 5$. Then $$\mathcal{G}(\A_{k\times l})\leq\mathcal{G}(\S_{k\times l})\leq \lceil\log_k(l+3k)\rceil+1.$$
\end{prop}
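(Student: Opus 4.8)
The plan is to follow the template that has been established for the preceding propositions (Propositions~\ref{k=3}, \ref{k=6-7,3}, etc.): exhibit an explicit $(k,l)$-intersection $2$-array $N$ whose stabiliser is minimal, verify this minimality by a finite case analysis against all competitors, and then invoke Lemma~\ref{key} to bound $\mathcal{G}$. Concretely, for $l = qk\pm 1$ with $k\in\{3,4\}$ the ``target'' residue is $r=\pm1$, so a natural candidate is a cyclic-shift matrix $\theta(v)$ with $v$ having one entry $q\pm 1$ (or $q$) and the rest $q$, perturbed by a rank-one correction of the form $-E(i,i)+E(i,j)+E(j,i)-E(j,j)$ so as to kill the cyclic symmetry, produce a single entry of size $q+2$ (resp. create a spread of entries $q-1,q,q+1$), and ensure $K_N$ is trivial or at least small. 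I would first write $N$ down explicitly for the two sign cases $r=+1$ and $r=-1$ and compute $N^*$ and $|K_N|$, then compute $|G(N)|$ via Lemma~\ref{top}.

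Next I would carry out the comparison argument. Let $A$ be a $(k,l)$-intersection $2$-array minimising $|G(A)|$, taken up to $\sim$, with $a_{11}=\max A^*$. The key subcases are: (a) $\max A^* = q+1$ with all entries in $\{q-1,q,q+1\}$ or $\{q-2,q,q+1\}$ — here one enumerates the finitely many arrays with rows/columns drawn from the allowed multisets (as in Proposition~\ref{k=3}) and checks $|G(A)|\ge |G(N)|$ using $q\ge 5$; (b) $\max A^* \ge q+2$ — here Lemma~\ref{minN} gives a clean lower bound $|G(A)| \ge (q+2)!\cdots$ or $(q+3)!\cdots$ forcing $|G(A)|>|G(N)|$ unless $A$ has exactly the right profile; and (c) the ``all entries near $q$'' case, i.e. $A = qJ$ or close to it, where $b(\S_{k\times 1})$-type obstructions or a direct factorial count rule it out. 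For $k=4$ there is the additional subtlety that $r$ can be $\pm 1$ while $q$ can be even or odd, so one may need a separate intersection $3$-array $W$ arising from $N$ (as in the $q=3m+1$ branch of Proposition~\ref{k=3}) when $q\pm 1$ happens to be divisible by $k$; I would handle that by exhibiting $W$ with coordinates as balanced as possible, checking $W$ is partwise fixed (using two entries whose coordinate patterns pin down the $\S_k$-action), and verifying $W$ satisfies the hypotheses of Lemma~\ref{key} with $\max W^* \le \lfloor l/k^2\rfloor + O(1)$.

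Finally, once $N$ (or $W$) is shown to be chosen by the greedy algorithm and to satisfy the conditions of Lemma~\ref{key} — namely it is partwise fixed and has some entry size $t\not\equiv 0\pmod k$ occurring at least $k$ times — I would apply Lemma~\ref{key} to get $\mathcal{G}(\A_{k\times l})\le\mathcal{G}(G_{\P,\Q}) + 2 = \lceil\log_k(\max N^*)\rceil + 2$ (or $+3$ if a $3$-array step is used), and then bound $\max N^* \le \lfloor l/k\rfloor + 2 \le (l+3k)/k$ so that $\lceil\log_k(\max N^*)\rceil + 2 \le \lceil\log_k(l+3k)\rceil + 1$, using a log-manipulation in the spirit of Lemma~\ref{logfacts}.

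The main obstacle I expect is step (a)/(b): the combinatorial enumeration of all intersection arrays $A$ with $\max A^* \in \{q+1, q+2\}$ that could conceivably beat $N$, since for $k=4$ the number of such arrays (up to row/column permutation and transposition) is considerably larger than in the $k=3$ case of Proposition~\ref{k=3}, and one must be careful that the chosen $N$ genuinely has the globally minimal stabiliser rather than merely a locally good one — in particular ensuring that no array with a more favourable value of $|K_A|$ (e.g. a cyclic-shift array with $|K_A|\ge k$) undercuts $N$. This is where I would lean most heavily on Lemma~\ref{minN} to replace brute enumeration with clean inequalities wherever possible, reserving explicit case-checking for the bounded family of ``near-$q$'' arrays.
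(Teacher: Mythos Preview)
Your template is sound and would work, but the paper's proof is considerably lighter than what you sketch, because it does \emph{not} try to pin down the unique minimiser. Instead it exhibits a single benchmark array $N$ for each $k\in\{3,4\}$ (with entries written uniformly as $q,\,q\pm\varepsilon,\,q+2\varepsilon$ so that both sign cases are handled at once), checks $K_N=1$ via \cite[Lemma~2.5]{cdvrd}, and then argues: if $A$ is any array with $|G(A)|\le|G(N)|$ then (i) $K_A=1$ by a one-line factorial bound; (ii) if the $\varepsilon$-extremal entry of $A$ reaches $q+2\varepsilon$, two applications of Lemma~\ref{minN} (to the first row/column and to the complementary submatrix) force $A\sim N$; (iii) otherwise the extremal entry is $q+\varepsilon$, and a short pigeonhole on rows yields $\omega_A(q+\varepsilon)\ge k$, while a tiny enumeration (one array for $k=3$, two for $k=4$) rules out $\omega_A(q)<k$. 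In either outcome $A$ satisfies the hypotheses of Lemma~\ref{key} with $t_2\in\{q,q+\varepsilon\}$---consecutive integers, so at most one is divisible by $k$---and $\max A^*\le q+2$ (a separate Lemma~\ref{minN} bound when $\varepsilon=-1$). No $3$-array step is ever needed.

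The two things this buys over your plan: first, it replaces the full enumeration of near-constant $4\times4$ arrays in your step~(a) by the structural claim ``$\omega_A(q)\ge k$ and $\omega_A(q+\varepsilon)\ge k$'', which needs only a handful of cases; second, by securing both multiplicities simultaneously it guarantees some $t\not\equiv 0\pmod k$ with $\omega_A(t)\ge k$ regardless of the residue of $q$, so the divisibility worry that prompted your $3$-array fallback simply evaporates. This benchmark-rather-than-minimiser strategy recurs in the later propositions for $k\le 6$ and is worth internalising.
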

\begin{proof}
Let $\varepsilon=l-qk = \pm1$, and consider the $(k,l)$-intersection array $N$ given by $$\begin{bmatrix}
        q&q-\varepsilon&q+2\varepsilon\\
        q+\varepsilon&q+\varepsilon&q-\varepsilon\\
        q&q+\varepsilon&q\\
    \end{bmatrix} \text{ if $k=3$,} \quad \quad 
    \begin{bmatrix}
        q&q&q-\varepsilon&q+2\varepsilon\\
        q+\varepsilon&q&q+\varepsilon&q-\varepsilon\\
        q&q+\varepsilon&q&q\\
        q&q&q+\varepsilon&q\\
    \end{bmatrix} \text{ if $k=4$.}
$$

For $k = 4$ the third and fourth rows of $N$ have equal multisets of entries, as do the first and second columns, but otherwise the multisets of entries in all rows and columns are distinct. Hence \cite[Lemma~2.5]{cdvrd} shows that $K_N = 1$ if $k=3$ and $K_N\leq \langle (3\,4)\rangle\times\langle (1\,2)\rangle$ otherwise. Only the identity element of $\langle (3\,4)\rangle\times\langle (1\,2)\rangle$ preserves $N$, so $K_N = 1$ in all cases.

Let $A$ be a $(k,l)$-intersection 2-array with $|G(A)|\leq|G(N)|$. If $K_A \neq 1$ then Lemma~\ref{minN} shows that $|G(A)|\geq2\cdot (q+\varepsilon)!^kq!^{k^2-k}\geq\frac{2q(q-1)}{(q+1)^2}|G(N)|>|G(N)|$, since $q\geq 5$, a contradiction. Therefore $K_A = 1$. 
Without loss of generality assume that $a_{11}=\max A^*$ if $\varepsilon=1$, and $a_{11}=\min A^*$ otherwise. 

Suppose first that $a_{11}\geq q+2$ if $\varepsilon=1$, and $a_{11}\leq q-2$ if $\varepsilon=-1$. Then since $$\sum_{i,j\ne1} a_{ij}=kl-(l+l-a_{11})=(k-2)l+a_{11},$$
by applying~Lemma~\ref{minN} separately to the multisets $\{\{a_{ij} :i=1\text{ or }j=1\}\}$ and $\{\{a_{ij} : i,j\ne 1\}\}$ we deduce that $A\sim N$. Thus we may assume from now on that each extremal entry of $A$ is $q+\varepsilon$.

If $\omega_A(q+\varepsilon)<k$, then $A$ has a row with no $q + \varepsilon$ --- since the row sum is $qk+\varepsilon$,  this row contains an entry at least $q+2\varepsilon$ if $\varepsilon=1$ and at most $q+2\varepsilon$ if $\varepsilon=-1$, contradicting our assumption, so $\omega_A(q+ \varepsilon) \ge k$. 

Similarly, if $\omega_A(q)<k$ then without loss of generality the first row of $A$ is $v_1=(q+\varepsilon,q+\varepsilon,q-\varepsilon)$ if $k=3$ and $v_2=(q+\varepsilon,q+\varepsilon,q+\varepsilon,q-2\varepsilon)$ if $k=4$. 
If $k=3$, then one now can exhaustively check that $A=\theta(v_1)$, and thus is not partwise fixed, a contradiction, hence $\omega_A(q) \ge k$. If $k=4$ then $$A \mbox{ is } \theta(v_2) \mbox{ or } \begin{bmatrix}
q+\varepsilon&q+\varepsilon&q+\varepsilon&q-2\varepsilon\\
q&q+\varepsilon&q-\varepsilon&q+\varepsilon\\
q-\varepsilon&q&q+\varepsilon&q+\varepsilon\\
q+\varepsilon&q-\varepsilon&q&q+\varepsilon\\
\end{bmatrix},$$ neither of which are partwise fixed, a contradiction. Hence again $\omega_A(q) \ge k$. 

Therefore, in all cases $A$ satisfies the conditions of Lemma~\ref{key} with $t_2 \in \{q, q+\varepsilon\}$, and so the result will follow provided that $\max A^{*}\leq q+2$. This inequality has already been established above when $\varepsilon=1$, so suppose that $\varepsilon=-1$ and suppose, for contradiction, that $\max A^{*}> q+2$. Then by Lemma~\ref{minN}, $$|G(A)|\geq(q+3)!q!^{k^2-k-4}(q-1)!^{k+3}=\frac{(q+3)(q+2)(q-1)}{q^2(q+1)}|G(N)|>|G(N)|,$$ a contradiction, hence the result.
\end{proof}

We now present a technical lemma describing the structure of the intersection arrays which will yield a stabiliser of minimum size when $4\leq q\leq6$, $q\geq 11$, and $r=0$. 
In this and the next proof we shall use a $k \times k$ matrix $L(k)$ with first row $(q+2,q,q,\dots,q,q-1,q-1)$, second to penultimate rows equal to the first $(k-2)$ rows of 
$\theta((q-1,q+1,q,q,\dots,q))$, and last row $(q-1,q,q,\cdots,q,q+1)$, so that 
$L(k)^* = \{\{(q+2),(q+1)^{k-1},(q-1)^{k+1}, q^{k^2-2k - 1}\}\}.$ One may verify computationally for $4 \le k \le 6$ that $N$ is a partwise fixed $(k,l)$-intersection array.

\begin{lemma}\label{minA46}
Suppose $4\leq k\leq 6$, that $q\geq 11$, and that $r=0$. Let $A$ be a $(k,l)$-intersection $2$-array with $|G(A)|$ minimum. 
Then 
\begin{enumerate} 
\item[(i)] $A$ is partwise fixed, so has at most one row and one column $(q, q, \ldots, q)$, and $\max A^\ast \le q+3$; 
\item[(ii)] if $k$ divides every entry of $t$ of $A$ with $\omega_A(t)\geq k$, then $A$ has exactly $k-1$ entries less than $q$, at least one of which is  $q-2$. 
\end{enumerate}
\end{lemma}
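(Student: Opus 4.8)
The plan is to argue by a sequence of comparisons with the reference array $L(k)$, exactly in the style of Proposition~\ref{k=6-7,3} and Proposition~\ref{other3}, but keeping careful track of how many small entries are forced. First I would record that $|G(L(k))/K_{L(k)}| = |G(L(k))| = (q+2)!\,(q+1)!^{k-1}(q-1)!^{k+1}q!^{k^2-2k-1}$, and that by Lemma~\ref{minN} (applied with the multiplicity sequence $X=[(1,q+2)]$, using $r=0$ so the average value is exactly $q$) this is the minimum of $|G(A')|$ over all $(k,l)$-intersection arrays $A'$ with $\max (A')^\ast \ge q+2$ and $|K_{A'}| = 1$. Since $L(k)$ is partwise fixed (stated in the lemma's preamble), $|K_{L(k)}|=1$, so $|G(A)| \le |G(L(k))|$ for the minimiser $A$. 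The first task is then to show $K_A = 1$: if $|K_A|\ge 2$ then, comparing against the all-$q$ array $qJ$ being impossible (since $b(\S_{k\times 1})$... actually here $r=0$ so $qJ$ is a genuine array) — more carefully, if $A$ has a nontrivial $K_A$ then $|G(A)| \ge 2\prod_{a\in A^\ast} a!$ over some partwise-symmetric configuration, and Lemma~\ref{minN} forces $|G(A)| \ge 2\,q!^{k^2}\cdot(\text{correction})$; the cleanest route is to note that any $K_A\ne 1$ array must pair up equal rows/columns, push its extremal entry down, and a direct ratio computation (as in Proposition~\ref{other3}, where the factor was $\tfrac{2q(q-1)}{(q+1)^2}$) gives $|G(A)| > |G(L(k))|$ for $q\ge 11$. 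Hence $K_A=1$, which is the partwise-fixed claim, and then $\max A^\ast \le q+3$ follows because $\max A^\ast \ge q+4$ forces, via Lemma~\ref{minN} applied to the row and column through that entry (which then must contain compensating small or large entries), a strictly larger product — the relevant ratio is roughly $\tfrac{(q+4)!\,q!}{(q+2)!\,(q+1)!^2}\cdot(\cdots) > 1$ for large $q$. The bound $\max A^\ast \le q+3$ rather than $q+2$ is deliberately loose so that the arithmetic stays clean; that finishes part (i) (the "at most one row/column all $q$" being immediate from $K_A=1$, since two equal rows give a nontrivial element of $K_A$).

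For part (ii), I would fix $t$ with $k \mid t$ and $\omega_A(t) \ge k$, and count. Since every row and column of $A$ sums to $l = qk$, the total sum is $qk^2$, so $\sum_{a\in A^\ast}(a - q) = 0$; writing $p = \sum_{a > q}(a-q)$ for the total "excess" and the same $p$ for the total "deficit", we have $p \ge 1$ unless $A = qJ$ — but $A = qJ$ has $K_A = \S_k \wr$ (nontrivial), contradicting part (i), so $p \ge 1$, indeed $\max A^\ast \le q+3$ bounds each excess by $3$. The key combinatorial point is that the $k$ entries equal to $t$, together with $k\mid t$, interact with the partwise-fixed condition: if $A$ had fewer than $k-1$ entries below $q$, then the deficit $p$ is concentrated in at most $k-2$ cells, and I would show (by the same Lemma~\ref{minN} comparison, now tracking the sub-multiset of entries $> q$ against the sub-multiset $\le q$ separately as in Proposition~\ref{other3}) that such an $A$ either fails to be partwise fixed or has $|G(A)| > |G(L(k))|$. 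Symmetrically, if all $k-1$ small entries were $\ge q-1$, then $p \le k-1 < $ (what's needed), and one checks the excess cannot be balanced without creating either an entry $\ge q+2$ with multiplicity forcing a large product, or a forbidden symmetry; so at least one small entry must be $\le q-2$, and by $\max A^\ast \le q+3 \Rightarrow \min A^\ast \ge q-3$ we get it is $q-2$ or $q-3$ — I would rule out $q-3$ by noting $(q-3)!$ in the product, combined with the compensating large entries it forces elsewhere, again pushes $|G(A)|$ above $|G(L(k))|$ via Lemma~\ref{minN}, leaving exactly the stated conclusion: precisely $k-1$ entries below $q$, one of them $q-2$.

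The main obstacle I anticipate is part (ii): unlike the earlier propositions, here $r = 0$ means the "balanced" array $qJ$ is a legitimate competitor that is only excluded by the partwise-fixed hypothesis rather than by a size comparison, so every case analysis must simultaneously track the factorial product \emph{and} the symmetry group $K_A$, and the interplay between "$k$ entries equal to a multiple of $k$" (needed downstream for Lemma~\ref{key}) and "exactly $k-1$ small entries" is delicate — there is no single slick inequality, and I expect to need a short finite case check for each $k \in \{4,5,6\}$ on the possible distributions of the excess $p \le $ small constant among the cells, exactly as the earlier proofs list out a handful of candidate arrays and verify $|G(A)| > |G(N)|$ for each. The hypotheses $4\le q\le 6$ and $q\ge 11$ in the lemma statement suggest the authors handle the small-$q$ window by direct computation and the large-$q$ regime by the ratio estimates; I would structure the proof the same way, doing the asymptotic argument for $q \ge 11$ and deferring $q \in \{4,5,6\}$ to an explicit (computational) verification.
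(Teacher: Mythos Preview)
Your outline for part~(i) is essentially the paper's argument: compare against $N = L(k)$, bound $|G(A)| \ge 2\cdot q!^{k^2}$ when $K_A \ne 1$, and invoke Lemma~\ref{minN} for $\max A^\ast \le q+3$. The paper's explicit ratios are $\tfrac{2q^{k+1}}{(q+2)(q+1)^k}$ and $\tfrac{(q+4)(q+3)q^{k-3}}{(q+1)^{k-1}}$, both exceeding $1$ for $q\ge 11$.

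Part~(ii), however, has a genuine gap. Your excess/deficit bookkeeping cannot rule out $\min A^\ast = q-1$. An array with exactly $k-1$ entries $q-1$, exactly $k-1$ entries $q+1$, and all others $q$ (these exist: put one $q+1$ and one $q-1$ in each of $k-1$ rows and columns) is perfectly balanced, and
\[
\prod_{a \in A^\ast} a! \;=\; (q+1)!^{\,k-1}(q-1)!^{\,k-1}q!^{\,k^2-2k+2} \;=\; \frac{q^2}{(q+1)(q+2)}\,\bigl|G(L(k))\bigr| \;<\; \bigl|G(L(k))\bigr|,
\]
so no factorial comparison against $L(k)$ can eliminate it, and your sentence ``$p \le k-1 < $ (what's needed)'' has no content here. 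The paper closes this case by a structural detour you are missing. First it proves $\omega_A(q) \ge k$: if $\omega_A(q) < k$ then no row is constant, at most one row has a \emph{unique} minimum $q-1$ (since such a row already carries $k-2$ copies of $q$), so at least $k-1$ rows each contain either two $(q-1)$s or an entry $\le q-2$; the resulting lower bound $(q-1)!^{2(k-1)}(q+1)!^{2(k-1)}q!^{k^2-4k+4}$ exceeds $|G(L(k))|$. The divisibility hypothesis of part~(ii) then forces $k \mid q$, hence $k \nmid (q-1)$, hence $\omega_A(q-1) < k$. Under $\min A^\ast = q-1$ this means exactly one row is constant $q$, and the complementary $(k-1)\times(k-1)$ block is a $(k-1,\,q(k-1))$-intersection array with exactly one $q+1$ in each row and column; the paper then invokes \cite[Lemma~2.7]{cdvrd} to conclude that such an array is never partwise fixed, contradicting part~(i). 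That external structural lemma is the key ingredient your proposal lacks --- pure counting against $L(k)$ will not finish the job.

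A minor point: you have misread the hypotheses as ``$4\le q\le 6$ and $q\ge 11$''. The lemma assumes $4 \le k \le 6$ and $q \ge 11$; there is no small-$q$ window to dispatch computationally.
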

\begin{proof}
Let $N = L(k)$. If $K_A \neq 1$ then $$|G(A)|\geq 2\cdot q!^{k^2}=\frac{2q^{k+1}}{(q+2)(q+1)^k}|G(N)|>|G(N)|,$$ since $q\geq 11$, a contradiction. Hence $K_A = 1$ and $A$ has at most one row and one column $(q, q, \ldots, q)$.
 
 If $\max A^*>q+3$, then Lemma~\ref{minN} shows that 
 $$|G(A)|\geq (q+4)!q!^{k^2-5}(q-1)!^{4}=\frac{(q+4)(q+3)q^{k-3}}{(q+1)^{k-1}}|G(N)|>|G(N)|,$$ 
 hence $\max A^*\leq q+3$. 
 
 It remains to show (ii), so suppose that $k$ divides every entry of $A$ that appears at least $k$ times. 
 
First suppose, for a contradiction, that $\omega_A(q)<k$. If a row of $A$ has a unique minimum entry $q-1$, then this row has exactly $k-2$ entries $q$, but $2(k-2) >\omega_A(q)$, so $A$ has \emph{at most} one such row. Thus at least $k-1$ rows of $A$ either have at least two $(q-1)$s, or have minimum entry less than $q-1$.
Hence there is some integer $0\leq s\leq k-1$ such that 
\begin{align*}
|G(A)|&\geq (q-2)!^s(q-1)!^{2(k-1-s)}(q+1)!^{2(k-1)}q!^{k^2-4k+4+s}\\
&\geq (q-1)!^{2(k-1)}(q+1)!^{2(k-1)}q!^{k^2-4k+4}
=\frac{(q+1)^{k-2}}{(q+2)q^{k-3}}|G(N)| >|G(N)|,
\end{align*} since $k\geq 4$, 
a contradiction. Therefore $\omega_A(q) \ge k$, and hence $k|q$. In particular $k$ does not divide $q-1$, whence $\omega_A(q-1)<k$.

Next, suppose for a contradiction that $\min A^*=q-1$. Then since $\omega_A(q-1)<k$, one (and hence by (i) exactly one) row has all entries $q$.  Therefore $\omega_A(q-1) = k-1$ and all other rows have a unique entry $q-1$. The same holds for columns, so without loss of generality $A$ has first row and column $(q, \ldots, q)$, and remaining entries forming a $(k-1,q(k-1))$-intersection array $B$ with $\max B^*=q+1$ and $\omega_B(q+1)=k-1$, with each occurrence of $q+1$ in a distinct row and column. Thus, by~\cite[Lemma 2.7]{cdvrd} $B$ is not partwise fixed, hence neither is $A$, a contradiction. Therefore $\min A^*<q-1$.

Since $A$ has at most one row with all entries $q$, the matrix $A$ has at least $k-1$ entries less than $q$. If $\min A^* < q-2$ then 
$$|G(A)|\geq (q-3)!(q-1)!^{k-2}(q+1)^{k+1}q!^{k^2-2k}=\frac{q^2(q+1)}{(q+2)(q-1)(q-2)}|G(N)|>|G(N)|,$$ 
hence $\min A^* =q-2$. If $A$ has more than $k-1$ entries less than $q$, then 
\[|G(A)|\geq (q-2)!(q-1)!^{k-1}(q+1)!^{k+1}q!^{k^2-2k-1}=\frac{q(q+1)}{(q-1)(q+2)}|G(N)|>|G(N)|.\] 
Therefore $A$ has exactly $k-1$ entries less than $q$, as was to be shown.
\end{proof}

\begin{prop}\label{k=4-6,0}
    Suppose that $4\leq k\leq 6$, that $q\geq 11$, and that $r=0$. Then $$\mathcal{G}(\A_{k\times l})\leq\mathcal{G}(\S_{k\times l})\leq\lceil \log_{k}(l+3k)\rceil +1.$$
\end{prop}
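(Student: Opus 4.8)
The plan is to follow the template of the preceding propositions: take $N:=L(k)$ as a benchmark array, use Lemma~\ref{minA46} to pin down the structure of any $2$-array $A$ minimising $|G(A)|$, and then feed such an $A$ into Lemma~\ref{key}. Since $r=0$ we have $l=qk$, and by transitivity the greedy algorithm first chooses partitions $\P_1,\P_2$ whose intersection array $A=M(\P_1,\P_2)$ minimises $|G(A)|$ up to $\sim$. By Lemma~\ref{minA46}(i) this $A$ is partwise fixed with $\max A^\ast\le q+3$, so the only remaining point is that $\omega_A(t)\ge k$ for some $t\not\equiv 0\pmod k$. Granting this, Lemma~\ref{key} applies with $m=2$ to give $\mathcal{G}(\A_{k\times l})\le\mathcal{G}(G)=\lceil\log_k(\max A^\ast)\rceil+2$, and since $\lceil\log_k(\max A^\ast)\rceil+2\le\lceil\log_k(q+3)\rceil+2=\lceil\log_k\bigl(k(q+3)\bigr)\rceil+1=\lceil\log_k(l+3k)\rceil+1$, the proposition follows.

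To obtain the multiplicity condition, I argue by contradiction: suppose $k$ divides every entry $t$ of $A$ with $\omega_A(t)\ge k$. From the paragraph preceding Lemma~\ref{minA46}, $N$ is a partwise fixed $(k,l)$-intersection array with $N^\ast=\{\{(q+2),(q+1)^{k-1},(q-1)^{k+1},q^{k^2-2k-1}\}\}$, so $|G(N)|=(q+2)!\,(q+1)!^{k-1}(q-1)!^{k+1}q!^{k^2-2k-1}$ by Lemma~\ref{top}, and $|G(A)|\le|G(N)|$ since $N$ is available to the greedy algorithm. By Lemma~\ref{minA46}(ii) and its proof, $k\mid q$, $\min A^\ast=q-2$, $\omega_A(q)\ge k$, and $A$ has exactly $k-1$ entries smaller than $q$, all lying in $\{q-2,q-1\}$ with at least one equal to $q-2$. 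As $k\ge 4$ and $k\mid q$, none of $q-2,q-1,q+1,q+2,q+3$ is divisible by $k$, so under our assumption each occurs fewer than $k$ times in $A$; in particular $\omega_A(q+1)\le k-1$, and every entry of $A$ lies in $\{q-2,\dots,q+3\}$ with $q$ the only value of multiplicity at least $k$.

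Writing $a=\omega_A(q-2)\ge 1$ and $c_i=\omega_A(q+i)$ for $i\in\{1,2,3\}$, counting the entries of $A$ and their sum forces $c_1+2c_2+3c_3=a+k-1$ together with $c_1\le k-1$. A short optimisation — lowering $a$ to its minimum value $1$ (each step replaces a $q-2$ and a $q+1$ by a $q-1$ and a $q$, multiplying the factorial product by $(q-1)/(q+1)<1$) and then distributing the forced excess over as many $(q+1)$-entries as the bound $c_1\le k-1$ allows, each remaining step being a one- or two-entry factorial-ratio comparison (alternatively, Lemma~\ref{minN}) — shows that among all multisets compatible with these constraints the minimum of $\prod_t t!^{\omega_A(t)}$ is attained at $\{\{(q-2),(q-1)^{k-2},q^{k^2-2k+2},(q+1)^{k-2},(q+2)\}\}$. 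Hence, since $A$ is partwise fixed,
$$|G(A)|\ \ge\ (q-2)!\,(q-1)!^{k-2}\,q!^{k^2-2k+2}\,(q+1)!^{k-2}\,(q+2)!\ =\ \frac{q^{2}}{q^{2}-1}\,|G(N)|\ >\ |G(N)|,$$
contradicting $|G(A)|\le|G(N)|$. So the assumed case is impossible, $A$ satisfies the hypotheses of Lemma~\ref{key}, and we are done.

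I expect this optimisation step, together with the closing numerical comparison, to be the only genuine obstacle: the extremal bad multiset exceeds $|G(N)|$ only by the factor $q^{2}/(q^{2}-1)$, so the multiplicities must be tracked with care, but a handful of elementary factorial estimates — or a single application of Lemma~\ref{minN} — suffice. The remaining ingredients (partwise-fixedness of $L(k)$, the logarithm identity, and the passage from $\S_{k\times l}$ to $\A_{k\times l}$) are routine or already supplied by Lemmas~\ref{minA46} and~\ref{key}.
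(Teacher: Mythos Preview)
Your argument is correct and in fact cleaner than the paper's. Both proofs reduce to showing that no $2$-array $A$ minimising $|G(A)|$ can fail the hypothesis of Lemma~\ref{key}, but the paper splits into two cases: for $k\in\{5,6\}$ it replaces the benchmark $L(k)$ by the block array
\[
N'=\begin{pmatrix} q & qJ_{1\times(k-1)}\\ qJ_{(k-1)\times 1} & L(k-1)\end{pmatrix},
\]
whose stabiliser is smaller than that of $L(k)$ by a factor $q/(q+1)$, and then uses the cruder bound $|G(A)|\ge (q-2)!(q-1)!^{k-2}(q+1)!^{k}q!^{k^2-2k+1}$ coming straight from Lemma~\ref{minN} (which ignores the cap $\omega_A(q+1)\le k-1$); for $k=4$ that bound is not good enough against either benchmark, so the paper instead pins down the exact shape of $A$ and exhibits a non-trivial element of $K_A$. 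You keep the single benchmark $L(k)$ throughout and compensate by squeezing the extra constraint $\omega_A(q+1)\le k-1$ into the optimisation, which pushes one unit of excess up to $q+2$ and produces the tight bound $|G(A)|\ge \frac{q^{2}}{q^{2}-1}\,|G(L(k))|$. This handles all three values of $k$ at once, with no structural case analysis.

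Two small cosmetic remarks. Your description of the first reduction (``each step replaces a $q-2$ and a $q+1$'') tacitly assumes $c_1\ge 1$; when $c_1=0$ one instead trades a $q-2$ and a $q+2$ (or $q+3$) for adjacent values, with the same effect. And Lemma~\ref{minN} as stated only imposes lower-bound rank constraints on one side of the mean, so it does not literally deliver your minimiser: the upper bound $c_1\le k-1$ is what forces the extra $(q+2)$, and that needs the direct two-line convexity argument you sketch rather than an appeal to Lemma~\ref{minN}. Neither point affects the validity of the proof.
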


\begin{proof}
Let $A$ be a $(k,l)$-intersection 2-array. If $A$ minimises $|G(A)|$ then Lemma~\ref{minA46} holds for $A$. If in addition $A$ satisfies the conditions of Lemma~\ref{key} then it does so with largest entry at most $(l/3)+3$, and the result follows from a short calculation. 

Assume therefore, for a contradiction, that $|G(A)|$ is minimal but $A$ fails the conditions of Lemma~\ref{key}. Recall the definition of $L(k)$ from just before Lemma~\ref{minA46}. 

\medskip
\noindent \underline{\textbf{Case $k\in\{5,6\}$:}}
Let $$N=\begin{pmatrix}
q&\aug&qJ_{1\times(k-1)}\\\hline
qJ_{(k-1)\times 1}&\aug&L(k-1)\\
\end{pmatrix}.$$ Then $N$ is a partwise fixed $(q,kq)$-intersection array, and Lemmas~\ref{minA46}(ii) and~\ref{minN} show that $$|G(A)|\geq (q-2)!(q-1)!^{k-2}(q+1)!^kq!^{k^2-2k+1}=\frac{q(q+1)}{(q+2)(q-1)}|G(N)|>|G(N)|,$$ giving the required contradiction.

\medskip
\noindent \underline{\textbf{Case $k=4$:}}
If $A$ has two entries $q-2$ then by Lemmas~\ref{minA46}(ii) and~\ref{minN} 
$$|G(A)|\geq (q-2)!^2(q-1)!(q+1)!^5q!^8=\frac{q^2(q+1)}{(q+2)(q-1)^2}|G(L(4))|>|G(L(4))|,$$ a contradiction. Since $A$ has exactly $k-1=3$ entries less than $q$, these are in distinct rows and columns, otherwise $A$ would have two rows or columns $(q,\dots,q)$. 
Thus there exist $a_{ij} \ge q$ such that $$A=\begin{bmatrix}q&q&q&q\\
q&q-2&a_{23}&a_{24}\\
q&a_{32}&q-1&a_{34}\\
q&a_{42}&a_{43}&q-1\\
\end{bmatrix}.$$ Each row and column sums to $4q$, so $a_{42}+a_{32}=a_{42}+a_{43}+1$, so $a_{32}=a_{43}+1$ and therefore $a_{34}+a_{43}=a_{34}+(a_{32}-1)=2q$. Therefore $a_{34}=a_{43}=q$, hence $a_{23}=a_{24}=a_{32}=a_{42}=q+1$, so $((3\,4),(3\,4))\in K_A$, contradicting $K_A  =1$.\end{proof}

\begin{prop}\label{k=4-6,2}
    Suppose $4\leq k\leq 6$, that $q\geq 4$, and that $r\in\{2,k-2\}$. Then $$\mathcal{G}(\A_{k\times l})\leq\mathcal{G}(\S_{k\times l})\leq\lceil \log_{k}(l+3k)\rceil +1.$$
\end{prop}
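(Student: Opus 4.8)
The plan is to follow the same template as in the previous propositions: exhibit a $(k,l)$-intersection 2-array $N$ with small stabiliser, show that it is partwise fixed and has at least $k$ entries equal to each of two consecutive values, at least one of which is coprime to $k$, and then prove that any $(k,l)$-intersection 2-array $A$ with $|G(A)|\leq |G(N)|$ satisfies the hypotheses of Lemma~\ref{key} with $\max A^\ast$ bounded by roughly $\lfloor l/k\rfloor+2$. Since here $r=\pm 2$, write $l=qk+r$ and take for $N$ a suitable perturbation of the cyclic-shift matrix $\theta(v)$, where $v$ has $|r|$ entries $q+1$ (or $q-1$, for $r=-2$, i.e. $r=k-2$ in the stated range after adjusting $q$) and the rest equal to $q$; we perturb two adjacent diagonal entries by $\pm 1$ exactly as in the proof of Proposition~\ref{k=6-7,3}, producing $N^\ast = \{\{q+2, (q+1)^{?}, q^{?}, (q-1)^{?}\}\}$ with the right multiplicities. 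One checks computationally (for the finitely many $k\in\{4,5,6\}$) that $N$ is partwise fixed, and Lemma~\ref{minN} with the appropriate multiplicity sequence $X$ gives the exact value of $|G(N)|=|G(N)/K_N|$ and shows it is minimal among arrays whose largest entry is at least $q+2$.

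The core of the argument is the case analysis on an arbitrary minimiser $A$. First, if $K_A\neq 1$ then $|G(A)|\geq 2\cdot(q+1)!^{rk'}q!^{\cdots}$ (the precise exponents coming from the multiset $A^\ast$ forced when no entry exceeds $q+1$), and comparing with $|G(N)|$ via the ratio of factorial products gives $|G(A)|>|G(N)|$ since $q\geq 4$; hence $K_A=1$ and $A$ has no repeated row or column. Here one needs the fact, analogous to the use of \cite[Theorem~1.1]{ms} in Proposition~\ref{k=6-7,3}, that the array with $A^\ast=\{\{(q+1)^{rk}, q^{(k-r)k}\}\}$ (up to translation) has $|K_A|\geq 2$ because $b(\S_{k\times r})>2$ for these small $k,r$ — this pins down why an all-$\{q,q+1\}$ array cannot beat $N$. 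Next, if $\min A^\ast\leq q-2$ or $\max A^\ast\geq q+3$ one bounds $|G(A)|$ from below by Lemma~\ref{minN} and checks the ratio exceeds $1$; so $A^\ast\subseteq\{q-1,q,q+1,q+2\}$, and a further factorial comparison forces either $A\sim N$ or a contradiction with partwise-fixedness. Finally, since $A$ is partwise fixed with $A^\ast$ confined to four consecutive values and $q,q+1$ each occurring with multiplicity $\geq k$ (this last point needs a short argument: if $\omega_A(q)<k$ then some row omits $q$ and its entries force either an extreme value outside the allowed range or a symmetry, contradicting $K_A=1$, exactly as in Proposition~\ref{other3}), the array $A$ satisfies the conditions of Lemma~\ref{key} with the coprime-to-$k$ witness among $\{q,q+1\}$ and $\max A^\ast\leq q+2\leq \lfloor l/k\rfloor+2$. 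Then Lemmas~\ref{key} and a short logarithm estimate (as in the end of Proposition~\ref{k=6-7,3}) give $\mathcal{G}(\A_{k\times l})\leq\mathcal{G}(G)=\lceil\log_k(\max A^\ast)\rceil+2\leq\lceil\log_k(l+3k)\rceil+1$.

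The main obstacle I anticipate is the combinatorial bookkeeping in the case $r=\pm 2$: because $\theta(v)$ now has two shifted diagonals worth of $(q\pm1)$'s plus the explicit perturbation, there are more near-minimal competitor arrays $A$ (several distinct shapes with $\max A^\ast=q+2$ and differing $K_A$) than in the $r=\pm 1$ or $r=3$ cases, so the "careful check yields the following possibilities" step will require enumerating and discarding a modest list of candidate matrices for each $k\in\{4,5,6\}$ — plausibly best delegated to a short computation, as was done for partwise-fixedness. A secondary subtlety is ensuring the perturbation preserves $K_N=1$ (partwise fixed) rather than merely $|K_N|$ small; since only finitely many $(k,q)$ with small $q$ are genuinely exceptional and the rest are uniform in $q$, a single computational verification per $k$ together with the uniform factorial inequalities should suffice.
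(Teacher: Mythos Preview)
Your template matches the paper's, but several specifics differ and one step has a genuine gap.

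The paper's $N$ is not the Proposition~\ref{k=6-7,3} perturbation: it is $\theta(v)+E(1,1)-E(1,2)-E(2,1)+E(2,2)$, so that $N^\ast=\{\{(q+2)^2,(q+1)^{rk-3},q^{k^2-rk},q-1\}\}$ with \emph{two} copies of $q+2$ and one $q-1$. This choice is deliberate: the ratio for a non-partwise-fixed $A$ becomes $\frac{2q(q+1)}{(q+2)^2}>1$ for $q\geq 4$, so the paper never invokes $b(\S_{k\times r})>2$. Your route via that base-size fact is plausible, but your description of $N^\ast$ is internally inconsistent (the Proposition~\ref{k=6-7,3} perturbation produces no $q-1$), and with only one $q+2$ in $N$ the factorial ratio against three copies of $q+2$ is $\frac{(q+2)^2q^2}{(q+1)^4}<1$, so you cannot bound $\omega_A(q+2)$ that way. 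The paper also never shows $A\sim N$ nor enumerates competitor matrices; it only proves that any minimiser satisfies the hypotheses of Lemma~\ref{key}, so your anticipated ``main obstacle'' does not in fact arise.

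The real gap is the step $\omega_A(q+1)\geq k$, which is not ``exactly as in Proposition~\ref{other3}''. There the entries lie in a three-element range and a row missing one value is rigidly determined; here a row with fewer than two $(q+1)$s can compensate with $(q+2)$s, so a structural argument is required. The paper's argument: if $\omega_A(q+1)<k$ then (since $k\geq 4$) at least three rows have fewer than two $(q+1)$s, each therefore containing an entry exceeding $q+1$; factorial comparisons with $N$ force all three such entries to equal $q+2$ and force $\min A^\ast\geq q$. For $r=2$ this pins $A$ to the block form with upper-left corner $2I_3+qJ_{3\times 3}$ and $qJ$ off-diagonal blocks, whence $((1\,2),(1\,2))\in K_A$, a contradiction. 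For $r=k-2\geq 3$ one instead notes that the first row (which has \emph{no} $q+1$) must contain at least two $(q+2)$s, giving four entries exceeding $q+1$ in total and hence a factorial contradiction. The bound $\omega_A(q)\geq k$ is handled by a parallel (shorter) count producing three entries below $q$. This case-split on $r$ is the substantive content of the proof and is absent from your sketch.
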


\begin{proof}
Let $v \in \mathbb{Z}^k$ have $r$ entries $q+1$ then $k-r$ entries $q$, and let $N$ be the $(k, l)$-intersection array $\theta(v)+E(1,1)-E(1,2)-E(2,1)+E(2,2)$, so that $N^\ast = \{\{(q+2)^2, (q+1)^{rk-3}, q^{k^2-rk},q-1\}\}$. We verify computationally that $K_N = 1$. 

Let $A$ be a $(k,l)$-intersection 2-array with $|G(A)|\leq |G(N)|$. By Lemma~\ref{minN}, if $K_A \neq 1$ then 
$$|G(A)|\geq 2\cdot (q+1)!^{rk}q!^{(k-r)k}=\frac{2q(q+1)}{(q+2)^2}|G(N)|> |G(N)|,$$ 
since $q\geq 4$, a contradiction. Thus $K_A = 1$.  If $\max A^*\geq q+4$ then 
$$|G(A)|\geq (q+4)!(q+1)!^{kr-4}q!^{k(k-r)+3}=\frac{q(q+4)(q+3)}{(q+2)(q+1)^2}|G(N)|>|G(N)|,$$ 
hence $\max A^*\leq q+3$. Therefore, by Lemma~\ref{key} and a short calculation it suffices to show that $\omega_A(t)\geq k$ for some $t$ that is not divisible by $k$. We shall show that both $\omega_A(q+1)$ and $\omega_A(q)$ are at least $k$, from which the result will follow.

\medskip

\noindent \textbf{\underline{$\omega_A(q+1)$:}} Assume for a contradiction that $\omega_A(q+1)<k$. Since $k \ge 4$, this implies that at least three rows have less than two $(q+1)$s, without loss of generality the first three rows, and the first row has no $(q+1)$s. To make the row sum $l$, each of the first three rows of $A$ have an entry greater than $q+1$. If one of these entries is greater than $q+2$, then $$|G(A)|\geq (q+3)!(q+2)!^2(q+1)!^{kr-7}q!^{(k-r)k+4}=\frac{q(q+2)(q+3)}{(q+1)^3}|G(N)|>|G(N)|,$$ a contradiction, hence 
these largest entries are all $q+2$. If $\min A^*<q$, then $|G(A)|\geq (q+2)!^3(q+1)!^{kr-5}q!^{(k-r)k+1}(q-1)!=\frac{q+2}{q+1}|G(N)|>|G(N)|,$ a contradiction. Therefore, $\min A^* \ge q$. 

If $r=2$ these conditions imply that $$A=\begin{pmatrix}
2I_{3}+qJ_{3 \times 3}&\aug&qJ_{3 \times (k-3)}\\\hline
qJ_{(k-3) \times 3}&\aug&B\\
\end{pmatrix},$$ where $B$ is a $(k-3,(k-3)q+2)$-intersection array. But then $((1\,2),(1\,2))\in K_A$, a contradiction. 

We may therefore assume from now that $r=k-2\geq 3$. By assumption, the first row of $A$ has no $q+1$, so since it has greatest entry $q+2$ it must have at least two such entries. Additionally, the second and third rows of $A$ each have at least one entry $q+2$. 
Therefore, $A$ has at least four entries greater than $q+1$. Lemma~\ref{minN} now shows
$$|G(A)|\geq(q+2)!^4(q+1)!^{kr-8}q!^{(k-r)k+4}=\frac{q(q+2)^2}{(q+1)^3}|G(N)|>|G(N)|,$$ 
a contradiction. Therefore $\omega_A(q+1)\geq k$.

\medskip

\noindent \textbf{\underline{$\omega_A(q)$:}} Assume for a contradiction that $\omega_A(q)<k$. Then analogously to the previous case, the first three rows of $A$ each have at most one $q$, hence $A$ has at least three entries less than $q$. Therefore, $$|G(A)|\geq (q+1)!^{kr+3}q!^{(k-r)k-6}(q-1)!^{3}=\frac{(q+1)^4}{q^2(q+2)^2}|G(N)|>|G(N)|,$$ giving the required contradiction.
\end{proof}

Before our final proposition we describe the structure of the 2-arrays with minimum stabilisers.

\begin{lemma}\label{minA56}
Suppose that $5\leq k\leq6$, that $q\geq7$, and that $r=\pm1$. Let $A$ be a $(k,l)$-intersection $2$-array such that $|G(A)|$ is minimum. Then 
\begin{enumerate}
\item[(i)]$A$ is partwise fixed and $\max A^*\leq \lfloor l/k\rfloor +3$; and 
\item[(ii)] if $k$ divides every entry $t$ of $A$ with $\omega_A(t) \ge k$, then $\omega_A(q+sr)=0$ for $s\geq3$,
$\omega_A(q+ r)=k-1$, and $\omega_A(q+2r)=2$ with both such entries in distinct rows and columns.
\end{enumerate}
\end{lemma}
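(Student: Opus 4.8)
The plan is to run the argument of Lemma~\ref{minA46} with $\varepsilon := r \in \{\pm 1\}$ in place of $r = 0$. As a yardstick I would fix an explicit ``near-cyclic'' $(k,l)$-intersection array $N$: start from $\theta(v)$ with $v \in \Z^k$ having one entry $q+\varepsilon$ and $k-1$ entries $q$, and apply a sum-preserving perturbation of five cells so that $N^* = \{\{(q+2\varepsilon)^2, (q+\varepsilon)^{k-1}, q^{k^2-k-4}, (q-\varepsilon)^3\}\}$. This is exactly the entry multiset of the conjectured minimiser in~(ii) once the $k^2-k-1$ entries on the $q$-side are made as balanced as possible, so $|G(N)| = |K_N| \cdot (q+2\varepsilon)!^2 (q+\varepsilon)!^{k-1} q!^{k^2-k-4} (q-\varepsilon)!^3$, and one checks by direct computation, for $k \in \{5,6\}$ and both signs of $\varepsilon$, that $K_N = 1$.

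For part~(i): let $A$ minimise $|G(A)|$, so $|G(A)| \le |G(N)|$. The global minimum of $\prod_{i,j} a_{ij}!$ over $(k,l)$-intersection arrays is $(q+\varepsilon)!^k q!^{k^2-k}$, attained exactly by the arrays $qJ + \varepsilon P$ with $P$ a permutation matrix, and a short calculation shows each of these has $|K_A| = k!$; a row-by-row application of Lemma~\ref{minN} shows that every other array has $\prod_{i,j} a_{ij}! \ge \frac{q+1}{q}(q+\varepsilon)!^k q!^{k^2-k}$. Hence if $K_A \ne 1$ then $|G(A)|$ is at least $k!\,(q+\varepsilon)!^k q!^{k^2-k}$ in the minimal case and at least $\frac{2(q+1)}{q}(q+\varepsilon)!^k q!^{k^2-k}$ otherwise, and both exceed $|G(N)|$ for $q \ge 7$; so $K_A = 1$, i.e. $A$ is partwise fixed, and in particular has at most one all-$q$ row and one all-$q$ column. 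Finally, if $\max A^* \ge \lfloor l/k\rfloor + 4$ then Lemma~\ref{minN} with $X = [(1, \lfloor l/k\rfloor + 4)]$ forces $|G(A)| > |G(N)|$, so $\max A^* \le \lfloor l/k\rfloor + 3$.

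For part~(ii): add the hypothesis that $k$ divides every $t$ with $\omega_A(t) \ge k$. Since $k \ge 5$, no two of $q-1, q, q+1$ are divisible by $k$; as almost all of the $k^2$ entries equal $q$ yet $A$ has at most one all-$q$ row and column, this forces $k \mid q$ and $\omega_A(q \pm 1) < k$. Working inwards from the $\varepsilon$-extreme entry and repeatedly applying Lemma~\ref{minN}, I would rule out, in turn: any entry at distance $\ge 3$ from $q$ on the $\varepsilon$-side; three or more entries at distance $2$; and more than $k-1$ entries at distance $1$ --- each of these makes $|G(A)| > |G(N)|$. This yields $\omega_A(q+s\varepsilon) = 0$ for $s \ge 3$ and $\omega_A(q+2\varepsilon) \le 2$. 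To sharpen this to ``$\omega_A(q+2\varepsilon) = 2$ with the two entries in distinct rows and columns'' and to pin $\omega_A(q+\varepsilon) = k-1$, I would argue as in the $k = 4$ case of Proposition~\ref{k=4-6,0}: in each excluded configuration, filling in the forced entries using the row and column sums and the at-most-one all-$q$ row/column produces either a $2 \times 2$ sub-block carrying a nontrivial element of $K_A$, or, via \cite[Lemmas~2.5--2.7]{cdvrd}, a sub-array that cannot be partwise fixed --- contradicting part~(i) in both cases.

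The main obstacle is part~(ii): the ways the per-row and per-column excess or deficit $\varepsilon$ can be split among entries at distances $0, 1, 2$ from $q$ force a genuine, if short, case analysis; the factorial estimates against $|G(N)|$ are tight for $q$ near $7$, which is also why part~(i) needs the two-case treatment of $K_A \ne 1$ rather than a crude factor-of-$2$ bound; and the concluding rigidity step --- that no partwise-fixed array has the excluded sub-block patterns --- must be distilled from the structural results of \cite{cdvrd}. The only other point needing attention is the computational check that the chosen $N$ is partwise fixed, for both $k \in \{5,6\}$ and both signs of $\varepsilon$.
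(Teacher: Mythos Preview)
Your choice of yardstick $N$ --- with $N^* = \{\{(q+2\varepsilon)^2,(q+\varepsilon)^{k-1},q^{k^2-k-4},(q-\varepsilon)^3\}\}$ --- is tighter than the paper's (which has $N^* = \{\{q+2r,(q+r)^{k+2},q^{k^2-k-7},(q-r)^4\}\}$), and your two-case treatment of $K_A\ne 1$ in part~(i) correctly compensates for this. But in part~(ii) the outlined order of steps does not go through, and this is independent of which $N$ you pick.

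The first gap is the assertion ``almost all of the $k^2$ entries equal $q$ \dots\ this forces $k\mid q$''. Nothing established so far gives $\omega_A(q)\ge k$; the paper obtains this by a substantive factorial estimate (if $\omega_A(q)<k$ then all but at most two rows must carry either two entries $q-r$ or an entry $q-2r$, and the resulting Lemma~\ref{minN} bound beats $|G(N)|$). Without this, you cannot conclude $k\mid q$, hence cannot conclude $\omega_A(q+\varepsilon)<k$. Note also that for $r=\pm1$ no row can be all-$q$, so the ``at most one all-$q$ row'' clause imported from Lemma~\ref{minA46} is vacuous here and does no work.

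The second, more serious, gap is the order ``rule out distance $\ge 3$, then bound distance $2$''. A direct Lemma~\ref{minN} bound with one entry $\ge q+3$ and the row constraint (at least $k$ entries $\ge q+1$) gives, for $k=5,\ r=1$, the product $(q+3)!(q+1)!^{4}q!^{18}(q-1)!^2$, and for $q=7$ the ratio of this to $|G(N)|$ is $\frac{(q+3)q}{(q+2)(q+1)}=\tfrac{70}{72}<1$ with your $N$ (and $\tfrac{490}{512}<1$ with the paper's $N$). So neither yardstick lets you exclude an entry $q+3$ at this stage. The paper's route is the reverse: first argue \emph{structurally} that at least two rows (and two columns) carry an entry more extreme than $q+r$ --- because if only one did, the remaining rows would all be permutations of $(q+r,q,\dots,q)$ and the column through the extreme entry could not balance --- and only \emph{then} apply Lemma~\ref{minN} with $X=[(1,q+3r),(1,q+2r),(k-2,q+r)]$, which does beat $|G(N)|$. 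Your ``work inwards'' scheme needs this lower bound on the number of distance-$\ge 2$ entries inserted before the distance-$\ge 3$ step, not after; the final structural sharpening to $\omega_A(q+2r)=2$ and $\omega_A(q+r)=k-1$ then proceeds as you sketch.
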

\begin{proof}
Let $v = (q,q,\dots,q,q+r)$ and let
$$X=E(1,k)-E(1,1)-E(k,k)+E(k,1)+E(2,k-1)-E(2,k-2)+E(3,k-2)-E(3,k-1).$$ Let $N$ be the $(k, l)$-intersection array $\theta(v)+ rX$, so that $N^*=\{\{q+2r,(q+r)^{k+2},q^{k^2-k-7},(q-r)^4\}\}$: we check computationally that $K_N = 1$. 

If $K_A\ne1$ then $$|G(A)|\geq2\cdot (q+ r)!^kq!^{k^2-k}\geq\frac{2q^3(q-1)}{(q+1)^4}|G(N)|>|G(N)|,$$ since $q\geq 7$, a contradiction, hence $K_A$ is partwise fixed. 
Moreover, if $\max A^*>\lfloor l/k\rfloor +3$ then \begin{align*}|G(A)|&\geq\begin{cases}
    (q+4)!(q+1)!^{k-4}q!^{k^2-k+3}&\text{ if $ r=1$}\\
    (q+3)!(q-1)!^{k+3}q!^{k^2-k-4}&\text{ if $ r=-1$}\end{cases}
& \geq \frac{q^4(q+4)(q+3)}{(q+1)^6}|G(N)| > |G(N)|,\end{align*} so $\max A^*\leq \lfloor l/k\rfloor +3$, proving (i).

For Part (ii), suppose that $k$ divides every entry $t$ of $A$ with $\omega_A(t) \ge k$. Thus either $\omega_A(q)<k$ or $\omega_A(q+ r)<k$. Seeking a contradiction, suppose that $\omega_A(q) < k$.  We shall say `$a$ is more extreme than $b$' to mean $a>b$ when $ r=1$ and $a<b$ when $ r=-1$ --- `less extreme than' is defined analogously. Since $\omega_A(q)<k$ and $k\geq 5$ all but at most two rows contain either two $(q- r)$s 
or an entry less extreme than $q- r$, hence there is some $s \in \{0, \ldots, k-2\}$ 
such that 
$$|G(A)|\geq (q-2 r)!^s(q- r)!^{2(k-2-s)}(q+ r)!^{3k-4}q!^{k^2-5k+8+s}.$$ 
A short calculation shows that the right hand side is minimised by taking 
$s=0$, thus 
$$|G(A)|\geq(q- r)!^{2k-4}(q+ r)!^{3k-4}q!^{k^2-5k+8}\geq\frac{(q+1)^{2k-8}(q-1)}{q^{2k-7}}|G(N)|>|G(N)|,$$ 
where the second inequality follows by considering both cases for $ r$. 
This is a contradiction, hence $\omega_A(q)\geq k$ and so $\omega_A(q+ r)<k$.
Every row and column of $A$ contains at least one entry more extreme than $q$, hence without loss of generality $a_{11}$ is more extreme than $q+ r$ since $\omega_A(q+ r)<k$. 

Suppose for a contradiction that no  row other than the first contains an entry more extreme than $q+r$. Then all other rows and columns contain exactly one entry more extreme than $q$, and moreover this must be $q+ r$. Therefore, rows $2$ to $k$ of $A$ are permutations of $v$, a contradiction as the first column of $A$ must have an entry less extreme than $q$. Similarly for columns, thus $A$ has at least two entries more extreme than $q+ r$, and these entries occur in distinct rows and columns. 

If one of these entries is more extreme than $q+2r$ then checking both possibilities for $ r$ shows that $$|G(A)|\geq (q+3 r)!(q+2 r)!(q+ r)!^{k-2}(q- r)!^{3}q!^{k^2-k-3}\geq\frac{(q+3)(q+2)q}{(q+1)^3}|G(N)|>|G(N)|,$$ a contradiction, hence $A$ contains at least two $(q+2 r)$s. 
Moreover, there cannot be more than two $(q + 2  r)$s, as $$(q+2 r)!^3(q+ r)!^{k-3}(q- r)!^3q!^{k^2-k-3}\geq\frac{q(q+2)^2}{(q+1)^3}|G(N)|>|G(N)|.$$ Thus $\omega_A(q+2r) = 2$.

Finally, if $\omega_A(q+ r)  \leq k-2$, then our conditions imply that $$A=\begin{pmatrix}
(q- r)J_{2\times 2}+3 r I_{2\times 2}&\aug&qJ_{2\times(k-2)}\\\hline
qJ_{(k-2)\times 2}&\aug&qJ_{(k-2)\times(k-2)}+  r I_{(k-2)\times(k-2)}\\
\end{pmatrix},$$ which is not partwise fixed, whence $\omega_A(q+ r)=k-1$, as was to be shown.
\end{proof}

We now wrap up with our final proposition.

\begin{prop}\label{k=5-6,1}
Suppose $5\leq k\leq6$, that $q\geq7$, and that $r=\pm1$. Then $$\mathcal{G}(\A_{k\times l})\leq\mathcal{G}(\S_{k\times l})\leq \lceil\log_{k}(l+3k)\rceil+1.$$
\end{prop}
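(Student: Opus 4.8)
The plan is to follow the template established in the preceding propositions (notably Propositions~\ref{k=4-6,0} and~\ref{k=4-6,2}): exhibit an explicit $(k,l)$-intersection array $N$ with small stabiliser, use $N$ together with Lemma~\ref{minN} to constrain any $2$-array $A$ minimising $|G(A)|$, and then invoke Lemma~\ref{key}. In fact the bulk of the structural work has already been done in Lemma~\ref{minA56}: if $A$ is a $(k,l)$-intersection $2$-array minimising $|G(A)|$, then either $A$ satisfies the hypotheses of Lemma~\ref{key} with some $t_2 \not\equiv 0 \pmod k$ and (by Lemma~\ref{minA56}(i)) largest entry at most $\lfloor l/k\rfloor + 3$, or else $k$ divides every entry $t$ of $A$ with $\omega_A(t)\geq k$, in which case Lemma~\ref{minA56}(ii) pins down the entry profile very tightly: $\omega_A(q+2r)=2$ with the two such entries in distinct rows and columns, $\omega_A(q+r)=k-1$, $\omega_A(q+sr)=0$ for $s\geq 3$, and (forced by the row/column sums, since all other large entries must be accounted for) $\omega_A(q-r)$ equal to some small constant with $\omega_A(q)\geq k$. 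So the first step is to rule out this residual case.

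Next I would argue that in this residual case $A$ cannot in fact be partwise fixed, contradicting Lemma~\ref{minA56}(i). The idea is the same as in the final paragraph of the proof of Proposition~\ref{k=4-6,0} and in the $\omega_A(q+r)\leq k-2$ step of Lemma~\ref{minA56}: with only $k-1$ off-$q$ ``positive'' entries ($q+r$) plus two $q+2r$ entries, and the remaining deficit absorbed by a bounded number of $q-r$ entries, all constrained to distinct rows/columns, the array decomposes (up to permutation and transposition) into a small ``core'' block together with a large all-$q$ complementary block; the all-$q$ block, or a residual $(k',k'q)$-subarray of the shape handled by \cite[Lemma 2.7]{cdvrd} or by a direct transposition argument, admits a nontrivial coordinate symmetry, so $K_A\neq 1$. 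This contradiction shows the residual case is vacuous, hence $A$ always satisfies the conditions of Lemma~\ref{key}.

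With that in hand, Lemma~\ref{key} gives $\mathcal{G}(\A_{k\times l})\leq \mathcal{G}(G)=\mathcal{G}(G_{\P,\Q})+2 = \lceil \log_k(\max A^*)\rceil + 2$, and since $\max A^* \leq \lfloor l/k\rfloor + 3$ by Lemma~\ref{minA56}(i), a short logarithm estimate of the type used repeatedly above — namely $\lceil\log_k(\lfloor l/k\rfloor+3)\rceil + 2 \leq \lceil\log_k(l+3k)\rceil + 1$ for $k\geq 5$ — yields the stated bound. (One checks this inequality directly: $k(\lfloor l/k\rfloor + 3) \leq l + 3k$, so $\lceil\log_k(\lfloor l/k\rfloor + 3)\rceil \leq \lceil\log_k(l+3k)\rceil - 1$, using that $l+3k \geq k$.) The $\A_{k\times l}$ bound then follows from the $\S_{k\times l}$ bound exactly as in Lemma~\ref{key}, since every $\S_{k\times l}$-stabiliser contains a transposition until the greedy process terminates, so the algorithm makes identical choices and may only stop one step earlier.

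The main obstacle is the elimination of the residual case — that is, verifying that the tightly constrained entry profile forced by Lemma~\ref{minA56}(ii) is genuinely incompatible with being partwise fixed. This requires a careful case analysis of how the $k-1$ copies of $q+r$, the two copies of $q+2r$, and the bounded number of $q-r$ entries can be arranged subject to all row and column sums equalling $l$; as in the $k=4$ endgame of Proposition~\ref{k=4-6,0}, balancing the row/column sums forces enough entries to be equal that a double transposition lies in $K_A$. Since $k\in\{5,6\}$ is fixed and small, this is a finite (if slightly tedious) check, and the computational verifications already cited for $K_N=1$ suggest the authors are comfortable discharging such steps by direct inspection; still, it is the only place where real combinatorial work beyond quoting Lemma~\ref{minA56} is needed.
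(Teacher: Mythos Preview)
Your plan works for $k=6$: there the residual profile from Lemma~\ref{minA56}(ii) really does force a nontrivial coordinate symmetry, and the paper dispatches it by exactly the kind of double-transposition argument you sketch. But for $k=5$ your central claim is false. The residual case is \emph{not} vacuous: there is a partwise-fixed $(5,l)$-intersection array
\[
A=\begin{bmatrix}
q+2r&q&q-r&q&q\\
q-r&q+2r&q&q&q\\
q&q&q+r&q&q\\
q&q-r&q+r&q+r&q\\
q&q&q&q&q+r
\end{bmatrix}
\]
with $K_A=1$, $\omega_A(q+2r)=2$, $\omega_A(q+r)=4=k-1$, and $\omega_A(q)\geq k$; since the hypothesis of Lemma~\ref{minA56}(ii) forces $k\mid q$, none of $q\pm r$, $q\pm 2r$ is divisible by $5$ either, so no entry with multiplicity $\geq 5$ avoids divisibility by $k$. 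Your proposed contradiction with partwise fixedness therefore cannot be obtained, and the argument stalls.

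The paper's remedy is not to rule this $A$ out but to take the greedy algorithm one step further: writing $q=5m$, it exhibits an explicit $(5,l)$-intersection $3$-array $W$ arising from $A$, checks $K_W=1$, and verifies that $W$ satisfies the hypotheses of Lemma~\ref{key} with largest entry $m+1=q/5+1$. Applying Lemma~\ref{key} at the $3$-array level then gives $\mathcal{G}(G)\leq \lceil\log_5(m+1)\rceil+3$, which is within the stated bound. So the missing idea in your proposal is precisely this descent to a $3$-array when the $2$-array analysis is genuinely inconclusive.
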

\begin{proof}
Let $A$ be a $(k,l)$-intersection 2-array minimising $|G(A)|$. By Lemma~\ref{minA56}(i) the group $K_A$ is trivial, and $\max A^\ast \le q+3$ if $ r = 1$ and $q+2$ otherwise. 
Hence if  there exists a $t$ that is not divisible by $k$ and satisfies $\omega_A(t) \ge k$ , then we are done by 
Lemma~\ref{key}, so suppose that this does not hold. Now Lemma~\ref{minA56}(ii) shows that $\omega_A(q+2 r) =2$, 
$\omega_A(q+ r) = k-1$, and if $r = 1$ then $\max A^\ast \le q+2$, while if $r = -1$ then $\min A^\ast \ge q-2$. 
\medskip

\noindent \underline{\textbf{Case $k=6$:}} Here $A$ has exactly one row and one column with more than one entry in $\{q+ r, q+2 r\}$.
Thus, without loss of generality the first three rows of $A$ have entry multiset $T=\{\{q^{k-1},q+ r\}\}$, let these entries $q+ r$ be $a_{1j_1}$, $a_{2j_2}$, and $a_{3j_3}$. 
Note that $j_1$, $j_2$, and $j_3$ are distinct, for if not then $A$ has two identical rows, contradicting $K_A = 1$, so without loss of generality $j_i = i$. Since the two $(q+2r)$s are in distinct columns, without loss of generality columns $1$ and $2$ have entry multiset $T$, for if not then $A$ has at most $6-2-2=2$ columns with entry multiset $T$, a contradiction. But then $((1\,2),(1\, 2))\in K_A$, contradicting $K_A = 1$.

\medskip

\noindent \underline{\textbf{Case $k=5$:}} From Lemmas~\ref{minN} and \ref{minA56}, $K_A = 1$ and with some work we see that up to $\sim$
$$A=\begin{bmatrix}
q+2 r&q&q- r&q&q\\
q- r&q+2 r&q&q&q\\
q&q&q+ r&q&q\\
q&q- r&q+ r&q+ r&q\\
q&q&q&q&q+ r
\end{bmatrix}.$$

Since by assumption $5$ divides every entry of $A$ which appears with multiplicity at least $5$, we deduce that $q=5m$ for some $m$.
Let $W\in(\Z^k)^{k\times k}$ be any $(k,l)$-intersection 3-array arising from $A$ which has one entry each of \begin{align*}&(m+ r,m,m,m,m),(m,m+ r,m,m,m),(m,m,m+ r,m,m),\\ &(m,m,m,m+ r,m),(m+ r,m,m,m,m+ r),(m,m+ r,m+ r,m,m),\\&(m- r,m,m,m,m),(m,m- r,m,m,m),(m,m,m- r,m,m),\end{align*} and all other entries $(m,m,m,m,m)$ --- this is possible since the sum in each coordinate is $25m+ r=5q+ r$. Since $A$ is partwise fixed, $K_W$ acts diagonally as a subgroup of $\S_5$ on each of the entries of $W$, but the first four tuples as above yield a trivial pointwise stabiliser, hence $K_W = 1$. Now $W$ satisfies the assumptions of Lemma~\ref{key}, with at least one of $m$, $m+r$ not divisible by $5$ and maximum entry $m+1 = q/k+1$, so the result follows.
\end{proof}

\begin{table}[h]
\centering
\begin{tabular}{lll|lll}
$k$ & $l$ & ref.&$k$ & $l$ & ref.\\
\hline
$2$&$l\geq 10$&Proposition~\ref{k=2}&6&$6q$, $q\geq 11$&Proposition~\ref{k=4-6,0} \\
&&&&$6q\pm2$, $q\geq 5$&Proposition~\ref{k=4-6,2}\\

$3$&$3q$, $q\geq 5$&Proposition~\ref{k=3}&&$6q\pm1$, $q\geq 7$&Proposition~\ref{k=5-6,1}\\
&$3q\pm 1$, $q\geq5$&Proposition~\ref{other3}&&$6q\pm3$, $q\geq 1$&Proposition~\ref{k=6-7,3}\\
&&&\\ 

$4$&$4q$, $q\geq 11$ &Proposition~\ref{k=4-6,0}  &7&$7q+r$, $q \ge 1$, $-2\leq r\leq 2$&Proposition~\ref{logbase}\\
&$4q\pm 1$, $q\geq5$ & Proposition~\ref{other3}&&$7q\pm3$, $q\geq 1$&Proposition~\ref{k=6-7,3}\\
&$4q+2$, $q\geq 4$&Proposition~\ref{k=4-6,2}&&\\
&&&$\geq 8$& $2$&Proposition~\ref{l=2}\\

5&$5q$, $q\geq 11$&Proposition~\ref{k=4-6,0}& & $\ge 3$ & Proposition~\ref{logbase} \\
&$5q\pm2$, $q\geq 5$&Proposition~\ref{k=4-6,2}&  \\
&$5q\pm1$, $q\geq 7$&Proposition~\ref{k=5-6,1}
\end{tabular}
\caption{A summary of where each pair $(k,l)$ is considered.} \label{tab1}
\end{table}

\begin{thm}\label{camskl}
Let $\mathcal{F}_2=\{\A_{k\times l},\S_{k\times l}: k,l\geq 2\text{ with } kl >4\}$. Then $\mathcal{F}_2$ is 11-ravenous.
\end{thm}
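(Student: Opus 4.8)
The strategy is to reduce Theorem~\ref{camskl} to the collection of results already assembled in Sections~\ref{sec:parts}, organised by the value of $k$ and the residue $l \bmod k$ exactly as recorded in Table~\ref{tab1}. First I would dispose of the small cases. For $k = 2$ the value $l = 2$ is excluded ($kl = 4$), so Proposition~\ref{k=2} covers $l \geq 10$ and one verifies computationally that $\mathcal{G}(G) = b(G)$, or at any rate $\mathcal{G}(G) \leq cb(G)$ with $c$ well below $11$, for each $G \in \{\A_{2\times l}, \S_{2\times l}\}$ with $3 \leq l \leq 9$. Likewise for $l = 2$ Proposition~\ref{l=2} handles $k \geq 3$ (giving $\mathcal{G}(G) \leq 4$, and $b(G) \geq 2$ always), and a finite computer check clears the remaining sporadic pairs $(k,l)$ with $k, l \geq 3$ that fall below the thresholds $q \geq 5$, $q \geq 7$, or $q \geq 11$ appearing in Propositions~\ref{k=3}, \ref{other3}, \ref{k=6-7,3}, \ref{k=4-6,0}, \ref{k=4-6,2}, \ref{k=5-6,1} and \ref{logbase}; there are only finitely many such pairs for each $k \leq 7$, and for $k \geq 8$ Proposition~\ref{logbase} already covers every $l \geq 3$.

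For the generic range, the bounds $\mathcal{G}(\A_{k\times l}) \leq \mathcal{G}(\S_{k\times l})$ and $\mathcal{G}(\S_{k\times l}) \leq \lceil \log_k(l + 3k)\rceil + 1$ (or the slight variants $\lceil \log_2(l+8)\rceil + 1$ and $\lceil \log_3(l+9)\rceil + 1$ for $k \in \{2,3\}$) follow from the cited propositions according to Table~\ref{tab1}. It then suffices to compare these upper bounds against a lower bound for $b(G)$. Here I would invoke the elementary inequality $b(G) \geq (\log |G|)/(\log |\Omega|)$ from the introduction: with $|G| \geq |\A_{kl}| = (kl)!/2$ and $|\Omega| = |\Pi_{k,l}| = (kl)!/(l!^k\, k!)$, Stirling's approximation gives $\log|\Omega| = \Theta(kl \log l)$ up to lower-order terms, and $\log|G| = \Theta(kl\log(kl))$, so $b(G) = \Omega(\log(kl)/\log l) = \Omega(\log_l k)$ at worst and $b(G) = \Omega(1)$ always; more usefully, for fixed $k$ one has $b(G) \geq 2$, and the explicit formulae of Morris--Spiga~\cite{ms} show $b(G)$ grows without bound as $l \to \infty$. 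The key quantitative point is that $\mathcal{G}(G) = O(\log_k l)$ while $b(G) \geq 2$ for all $(k,l)$ in range, and $b(G) \to \infty$ as $l \to \infty$ with $k$ fixed; combining these, the ratio $\mathcal{G}(G)/b(G)$ is bounded by a constant, and chasing through the worst case — which occurs at the smallest admissible $l$ for the smallest $k$, i.e. around $k = 2$, $l$ just above $10$, or in one of the finitely many computer-checked pairs — yields the constant $11$.

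The main obstacle is the bookkeeping: one must confirm that Table~\ref{tab1} genuinely partitions all pairs $(k,l)$ with $k,l \geq 2$, $kl > 4$ (every residue class mod $k$ for $2 \leq k \leq 7$, every $q$ above the stated threshold, plus the finite residual set), and then check that none of the finitely many residual pairs — nor the extremal pair in the generic range — violates the ratio $11$. I expect the genuinely delicate step to be pinning down $b(G)$ from below tightly enough in the extremal small cases: for those one relies on the Morris--Spiga formulae~\cite{ms} (or direct computation) rather than the crude logarithmic bound, since near the threshold $\lceil \log_k(l+3k)\rceil + 1$ and $b(G)$ are both small integers and the ratio is sensitive to off-by-one errors. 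Everything else is routine verification against the preceding propositions.
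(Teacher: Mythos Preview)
Your plan follows the paper's outline for the generic range: match the proposition upper bounds from Table~\ref{tab1} against a lower bound for $b(G)$. The paper does this via the Morris--Spiga bound $b(G)\geq\log_k(l+2)$ from~\cite[Theorems 1.1 and 1.2]{ms}; your elementary bound would also suffice once the arithmetic is corrected (you write $\Omega(\log_l k)$ where you need $\Omega(\log_k l)$).

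The genuine gap is your treatment of the residual pairs $(k,l)$ not covered by Table~\ref{tab1}. You propose to clear these by a finite computer check, but this is not feasible: since Proposition~\ref{k=4-6,0} requires $q\geq11$, the residual set includes for instance $(k,l)=(4,40)$, $(5,50)$ and $(6,60)$, where the permutation degree $|\Pi_{k,l}|=(kl)!/(l!^k k!)$ is astronomically large and no direct computation of greedy orbits is possible. The paper avoids this entirely by invoking Blaha's general bound~\cite[Theorem 4.4]{blaha}, namely $\mathcal{G}(G)/b(G)\leq\log\log n+1$ for any group of degree $n$; since $n=|\Pi_{k,l}|$ is an explicit expression in $k$ and $l$, this reduces every residual case to a single arithmetic evaluation, the maximum occurring at $(k,l)=(6,60)$ and giving a ratio just below $11$. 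This Blaha bound is the missing ingredient in your argument, and it is precisely what determines the constant $11$ in the statement.
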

\begin{proof}
Let $G\in \mathcal{F}_2$. Morris and Spiga show in~\cite[Theorems 1.1 and 1.2]{ms} that $b(G)\geq\log_k(l+2)$.  In Table~\ref{tab1} we summarise where each pair $(k, l)$ has been considered so far. By comparing $\log_k(l+2)$ with the bounds on $\mathcal{G}(G)$ in each relevant proposition, we find that if $(k,l)$ is listed in Table~\ref{tab1} then $\mathcal{G}(G)/b(G) \le 11$. 
For the finitely many $(k,l)$ not appearing in Table~\ref{tab1}, let $n = |\Pi_{k,l}| = ((kl)!)/(l!^kk!)$. From~\cite[Theorem 4.4]{blaha}, $\mathcal{G}(G)/b(G)\leq
\log(\log n) +1.$ Among the remaining $(k,l)$, this is maximised when $k=6$ and $l=60$. In this case we obtain $\mathcal{G}(G)/b(G)\leq 11$, as desired.
\end{proof}

\end{document}